\DeclareMathOperator{\modulo}{mod}
\DeclareMathOperator{\Span}{span}
\DeclareMathOperator{\Rank}{rank}
\DeclareMathOperator{\Card}{card}
\DeclareMathOperator{\rev}{rev}
\def\widebreve{\mathpalette\wide@breve}
\def\wide@breve#1#2{\sbox\z@{$#1#2$}%
     \mathop{\vbox{\m@th\ialign{##\crcr
\kern0.08em\brevefill#1{0.8\wd\z@}\crcr\noalign{\nointerlineskip}%
                    $\hss#1#2\hss$\crcr}}}\limits}
\def\brevefill#1#2{$\m@th\sbox\tw@{$#1($}%
  \hss\resizebox{#2}{\wd\tw@}{\rotatebox[origin=c]{90}{\upshape(}}\hss$}
\newcommand{\RR}{\mathbb R}
\newcommand{\NN}{\mathbb N}
\newcommand{\ZZ}{\mathbb Z}
\newcommand{\CC}{\mathbb C}
\newcommand{\cB}{\mathcal B}
\newcommand{\cV}{\mathcal V}
\newcommand{\cZ}{\mathcal Z}
\newcommand{\cC}{\mathcal C}
\newcommand{\benu}{\begin{enumerate}}
\newcommand{\eenu}{\end{enumerate}}
\newcommand{\bop}{\begin{opomba}}
\newcommand{\eop}{\end{opomba}}
\newcommand{\supp}{\mathrm{supp}}
\newtheorem{theorem}{Theorem}[section]
\newtheorem{corollary}[theorem]{Corollary}
\newtheorem{proposition}[theorem]{Proposition}
\theoremstyle{definition}
\newtheorem{example}[theorem]{Example}
\newcommand{\mbb}{\mathbb}
\newcommand{\mbf}{\mathbf}
\theoremstyle{remark}
\newtheorem{remark}[theorem]{Remark}
\numberwithin{equation}{section}
\begin{document}

\title{The truncated moment problem on curves $y=q(x)$ and $yx^\ell=1$}

\author[Alja\v z Zalar]{Alja\v z Zalar}

\address{%
Faculty of Computer and Information Science\\
University of Ljubljana\\
Ve\v cna pot 113\\
1000 Ljubljana\\
Slovenia}

\address{%
Faculty of Mathematics and Physics \\
University of Ljubljana\\
Jadranska ulica 19\\
1000 Ljubljana\\
Slovenia}

\address{%
Institute of Mathematics, Physics and Mechanics\\
Jadranska ulica 19\\
1000 Ljubljana\\
Slovenia}

\email{aljaz.zalar@fri.uni-lj.si}

\thanks{Supported by the Slovenian Research Agency grants J1-2453, J1-3004, P1-0288.}

\subjclass[2020]{Primary 44A60, 47A57, 47A20; Secondary 
15A04, 47N40.}

\keywords{Truncated moment problems; $K$--moment problems; $K$--representing measure; Minimal measure; Moment matrix extensions; Positivstellensatz; Linear matrix inequality}
\date{\today}
\maketitle

\begin{abstract}
	In this paper we study the bivariate truncated moment problem (TMP) 
	on curves of the form $y=q(x)$, $q(x)\in \RR[x]$, $\deg q\geq 3$, and $yx^\ell=1$, $\ell\in \NN\setminus\{1\}$.
	For even degree sequences the solution based on the size of moment matrix extensions 
	was first given by Fialkow \cite{Fia11} using the truncated Riesz-Haviland theorem \cite{CF08}
	and a sum-of-squares representations for polynomials, strictly positive on such curves 
	\cite{Fia11,Sto01}.
	Namely, the upper bound on this size is quadratic in the degrees of the
	sequence and the polynomial determining a curve.
	We use a reduction to the univariate setting technique,
	introduced in \cite{Zal21,Zal22a,Zal22b},
	and improve Fialkow's bound to $\deg q-1$ (resp.\ $\ell+1$) for curves $y=q(x)$ (resp.\ $yx^\ell=1$).
	This in turn gives analogous improvements of 
	the degrees in the sum-of-squares representations referred to above.
	Moreover, we get the upper bounds on the number of atoms in the minimal representing measure,
	which are $k\deg q$ (resp.\ $k(\ell+1)$) for curves $y=q(x)$ 
	(resp.\ $yx^\ell=1$) 
	for even degree sequences, 
	while for odd ones they are 
	$k\deg q-\big\lceil\frac{\deg q}{2} \big\rceil$ (resp.\ $k(\ell+1)-\big\lfloor\frac{\ell}{2} \big\rfloor+1$) for curves $y=q(x)$ 
	(resp.\ $yx^\ell=1$).
	In the even case these are counterparts to the result by Riener and Schweighofer \cite[Corollary 7.8]{RS18}, 
	which gives the same bound for odd degree sequences on all plane curves.
	In the odd case their bound is slightly improved on the curves we study.
	Further on, 
	we give another solution to the TMP on the curves studied based on the feasibility
	of a linear matrix inequality, 
	corresponding to the univariate sequence obtained,
	and finally we solve concretely odd degree cases to the TMP on curves $y=x^\ell$, $\ell=2,3$,
	and add a new solvability condition to the even degree case on the curve $y=x^2$.
\end{abstract}

\section{Introduction}
\label{S1}

Given a real $2$--dimensional sequence
	$$\beta^{ (d)}=\{\beta_{0,0},\beta_{1,0},\beta_{0,1},\ldots,\beta_{d,0},\beta_{d-1,1},\ldots,
		\beta_{1,d-1},\beta_{0,d}\}$$
of degree $d$ and a closed subset $K$ of $\RR^2$, the \textbf{truncated moment problem ($K$--TMP)} supported on $K$ for $\beta^{ (d)}$
asks to characterize the existence of a positive Borel measure $\mu$ on $\RR^2$ with support in $K$, such that
	\begin{equation}
		\label{moment-measure-cond}
			\beta_{i,j}=\int_{K}x^iy^j d\mu\quad \text{for}\quad i,j\in \ZZ_+,\;0\leq i+j\leq d.
	\end{equation}
If such a measure exists, we say that $\beta^{ (d)}$ has a representing measure 
	supported on $K$ and $\mu$ is its $K$--\textbf{representing measure ($K$--rm).}

Let $k=\big\lceil\frac{d}{2}\big\rceil$.
In the degree-lexicographic order $1,X,Y,X^2,XY,Y^2,\ldots,X^k,X^{k-1}Y,\ldots,Y^k$ of rows and columns, the corresponding moment matrix to $\beta$
is equal to 
	\begin{equation}	
	\label{281021-1448}
		M_k=M_k(\beta):=
		\left(\begin{array}{cccc}
		M[0,0](\beta) & M[0,1](\beta) & \cdots & M[0,k](\beta)\\
		M[1,0](\beta) & M[1,1](\beta) & \cdots & M[1,k](\beta)\\
		\vdots & \vdots & \ddots & \vdots\\
		M[k,0](\beta) & M[k,1](\beta) & \cdots & M[k,k](\beta)
		\end{array}\right),
	\end{equation}
where
	$$M[i,j](\beta):=
		\left(\begin{array}{ccccc}
		\beta_{i+j,0} & \beta_{i+j-1,1} & \beta_{i+j-2,2} & \cdots & \beta_{i,j}\\
		\beta_{i+j-1,1} & \beta_{i+j-2,2} & \beta_{i+j-3,3} & \cdots & \beta_{i-1,j+1}\\
		\beta_{i+j-2,2} & \beta_{i+j-3,3} & \beta_{i+j-4,4} & \cdots & \beta_{i-2,j+2}\\
		\vdots & \vdots & \vdots & \ddots &\vdots\\
		\beta_{j,i} & \beta_{j-1,i+1} & \beta_{j-2,i+2} & \cdots & \beta_{0,i+j}\\
		\end{array}\right)$$
and for odd $d$, the lower right corner $M[k,k]$ of $M_k(\beta)$ is undefined. 
Until the end of this section we assume that $M_k$ is fully determined, i.e., it corresponds to the even degree sequence $\beta^{(2k)}$.
Let 
$\RR[x,y]_k:=\{p\in \RR[x,y]\colon \deg p\leq k\}$ 
stand for the set of real polynomials in variables $x,y$ of degree at most $k$, where for $p\not\equiv 0$ the degree $\deg p$ stands for the maximal sum $i+j$ over all monomials 
$x^iy^j$ appearing in $p$ with a nonzero coefficient $a_{ij}$, while for $p\equiv 0$, $\deg p=0$.
For every $p(x,y)=\sum_{i,j} a_{ij}x^iy^j\in \RR[x,y]_k$ we define
its \textbf{evaluation} $p(X,Y)$ on the columns of the matrix $M_k$ by replacing each capitalized monomial $X^iY^j$
in $p(X,Y)=\sum_{i,j} a_{ij}X^iY^j$ by the column of $M_k$, indexed by this monomial.
Then $p(X,Y)$ is a vector from the linear span of the columns of $M_k$. If this vector is the zero one, i.e., all coordinates are equal to 0, 
then we say $p$ is a \textbf{column relation} of $M_{k}$.
Recall from \cite{CF96}, that $\beta$ has a rm $\mu$ with the support $\supp(\mu)$ being a 
subset of 
$\cZ(p):=\{(x,y)\in \RR^2\colon p(x,y)=0\}$ if and only if $p$ is a column relation of $M_{k}$.
We say that the matrix $M_{k}$ is \textbf{recursively generated (rg)} if for $p,q,pq\in \RR[x,y]_k$ such that $p$ is a column relation of $M_k$, 
it follows that $pq$ is also a column relation of $M_k$.

A \textbf{concrete solution} to the TMP is a set of necessary and sufficient conditions for the existence of a $K$--rm, that can be tested in 
numerical examples. 
Among necessary conditions, $M_{k}$ must be positive semidefinite (psd) and rg 
\cite{CF96}.
A crucial tool to tackle the TMP, discovered by Curto an Fialkow in 1996, was a \textbf{flat extension theorem (FET)} \cite[Theorem 7.10]{CF96} (see also \cite[Theorem 2.19]{CF05b} and \cite{Lau05} for 
an alternative proof), which states that $\beta^{(2k)}$ admits a $(\Rank M_{k})$--atomic 
rm
if and only if $M_{k}$ is psd and admits a rank-preserving extension to a moment matrix $M_{k+1}$. 
Using the FET as the main tool the bivariate TMP has been concretely solved in
the following cases: 
	$K$ is the variety defined by a polynomial $p(x,y)=0$ with $\deg p\leq 2$ 
	\cite{CF02, CF04, CF05,Fia14};
	$K=\RR^2$, $k=2$ and $M_2$ is invertible \cite{CY16}, first solved nonconstructively in \cite{FN10};
	$K$ is the variety $y=x^3$ \cite{Fia11};
 	$M_{k}$ has a special feature such as \textit{recursive determinateness} \cite{CF13} or \textit{extremality} \cite{CFM08}.
Some special cases have also been solved in \cite{CY15,Yoo17a,Yoo17b} based on the FET and in \cite{Ble15,BF20,DS18,Fia17,Kim14} using different approaches.

References to some classical work on the TMP are monographs \cite{Akh65,AhK62,KN77}, while for a recent development in the area we refer a reader to \cite{Sch17}.
We also mention some variants of the TMPs, which attracted a recent research interest, such as versions of the infinite dimensional TMPs \cite{AJK15,GKM16,IKLS17}, the TMP on subspaces of polynomial algebra \cite{Nie14}, the TMP for commutative $\RR$--algebras \cite{CGIK+}, matrix and operator TMPs \cite{And70,BK12,BZ18,BZ21,BW11,DFMT09,DS02,KT22,KW13}, etc.

In our previous work we introduced a new approach to tackle the singular bivariate TMP, namely \textit{a reduction to the univariate setting technique}.
The idea is to use one of the column relations to transform the problem into the equivalent univariate TMP, 
where also negative moments of the measure could be present or not all moments between the lowest and the highest degree ones are known.
In the case all moments from degree 0 to the highest degree are known, the situation is well understood in terms of the existence and uniqueness of the
rm
and has been solved in full generality \cite{CF91}
for measures with support $\RR$, $[a,\infty)$ or $[a,b]\subset \RR$, $a,b \in \RR$, $a<b$,
as well as for even and odd degree sequences. 
In the presence of negative moments we gave a solution along the lines of the classical case in \cite{Zal22b}, where we note that the existence of the solution even 
in the matrix case was already established by Simonov \cite{Sim06} (but the measure is not constructively obtained and the number of atoms in a minimal measure does not directly follow from this more general approach). 
Using these results we presented \cite{BZ21,Zal21,Zal22a,Zal22b} alternative solutions with shorter proofs compared to the original ones 
to the TMPs on the curves $xy=0$, $y=x^3$, $y^2=y$, $xy=1$,
but also obtained solutions to new cases, namely on the curve $y^2=x^3$, on the union of three parallel lines and on $xy^2=1$.

The motivation for this paper was to use a reduction technique to the TMP on curves of the form $y=q(x)$ and $yq(x)=1$, where $q\in \RR[x]$.
In \cite[Section 6]{Fia11}, Fialkow gave a solution to the TMP on these curves for even degree sequences
in terms of the bound on the degree $m$ for which the existence of a positive extension $M_m$ of $M_{k}$ is equivalent to the existence of a rm.
Namely, his bound is quadratic in $k$ and $\deg q$.
Using a reduction technique we are able to decrease his bound in the even degree case for all curves of the form $y=q(x)$,
$\deg q\geq 3$, to $\deg q-1$ and for curves of the form $yx^\ell=1$, $\ell\in \NN\setminus\{1\}$, to $\ell+1$, which is our first main result.
Moreover, the reduction technique also works in the odd degree case.
A corollary to this improved bounds are also improvements of the sum-of-squares representations for polynomials, strictly positive on such curves, by decreasing the degrees of the polynomials in the representation.
Our second main result are the upper bounds on the number of atoms in the minimal 
rm,
which are for curves $y=q(x)$, $\deg q\geq 3$, equal to $k\deg q$ in the even and $k\deg q-\big\lceil\frac{\deg q}{2} \big\rceil$ in the odd case and for curves $yx^\ell=1$, $\ell\geq 2$, equal to $k(\ell+1)$ in the even and 
$k(\ell+1)-\big\lfloor\frac{\ell}{2} \big\rfloor+1$ in the odd case. In the even case these results 
are counterparts to the result of Riener and Schweighofer \cite[Corollary 7.8]{RS18}, who proved that for \textit{all} plane curves, odd degree sequence have at most $k\deg q$ atoms in the minimal measure.
For curves of the above form we improve their bound slightly in the odd degree case.
The third main result of the paper is another solution to the TMPs studied, which is based on the feasibility
of a linear matrix inequality corresponding to the univariate sequence obtained. 
Moreover, we give concrete solutions to the odd degree TMPs on the curves $y=x^2$ and $y=x^3$ and an alternative solution to the even degree case on $y=x^2$ 
with a new solvability condition, which will be crucially needed in the solution of the TMP on the reducible curve $y(y-x^2)=0$ in our forthcoming work.

\subsection{Reader's guide} 

The paper is organized as follows.
In Section \ref{251122-2143} we fix some further notation and known results on the TMP, which will be used in the proofs of our results.
In Section \ref{251122-2146} we give two solutions to the $K$--TMP for $K=\{(x,y)\in\RR^2\colon y=q(x)\}$, $q\in \RR[x]$, $\deg q\geq 3$,
one based on the size of psd extensions of the moment matrix needed (see Theorems \ref{240822-1317} and \ref{240822-1317-odd}
for the even and the odd degree cases, respectively) and the other one based on the feasibility question of a certain
linear matrix inequality (see Theorem \ref{121122-1341}). Theorems \ref{240822-1317} and \ref{240822-1317-odd} 
also give bounds on the number of atoms in a minimal $K$--rm.
Moreover, Theorem \ref{240822-1317} 
gives a Positivstellensatz on $K$ as a corollary (see Corollary \ref{290822-1100}). Further on, we solve concretely the TMPs
on the curve $y=x^2$ (see Theorems \ref{090622-1549} and \ref{090622-1549-odd} for the even and the odd degree cases, respectively) 
and on $y=x^3$ for the odd case (see Theorem \ref{21122-0855-odd}).
In Section \ref{251122-2158} we give the corresponding results to the ones from Section \ref{251122-2146} 
for curves $yx^\ell=1$, $\ell\in \NN\setminus\{1\}$.
Theorems  \ref{280822-2053} and \ref{151122-1348} are the counterparts of Theorems \ref{240822-1317} and \ref{240822-1317-odd}, respectively, 
Corollary \ref{290822-1052} of Corollary \ref{290822-1100} and Theorem \ref{14122-1729} of Theorem \ref{121122-1341}.\\




\section{Preliminaries}
\label{251122-2143}

In this section we fix some terminology, notation and present some tools needed in the proofs of our main results.

We write $\RR^{n\times m}$ for the set of $n\times m$ real matrices. For a matrix $M$ 
we call the linear span of its columns a \textbf{column space} and denote it by $\cC(M)$.
The set of real symmetric matrices of size $n$ will be denoted by $S_n$. 
For a matrix $A\in S_n$ the notation $A\succ 0$ (resp.\ $A\succeq 0$) means $A$ is positive definite (pd) (resp.\ positive semidefinite (psd)).

In the rest of this section let $d\in \NN$ and $\beta=\beta^{ (d)}=\{\beta_{i,j}\}_{i,j\in \ZZ_+,\; 0\leq i+j\leq d}$ be a bivariate sequence of degree $d$.

\subsection{Moment matrix}
Let $k=\big\lceil\frac{d}{2}\big\rceil$ and $M_k=M_k(\beta)$ be the moment matrix of $\beta$ (see \eqref{281021-1448}).
Let $Q_1, Q_2$ be subsets of the set $\{X^iY^j\colon  i,j \in \ZZ_+,\; 0\leq i+j\leq k\}$.
We denote by 
$(M_k)|_{Q_1,Q_2}$ 
the submatrix of $M_k$ consisting of the rows indexed by the elements of $Q_1$
and the columns indexed by the elements of $Q_2$. In case $Q:=Q_1=Q_2$, we write 
$(M_k)|_{Q}=(M_k)|_{Q,Q}$
for short. 


\begin{remark}
\label{301122-1948}
Whenever $Q_1,Q_2$ will be subsets of $\{x^iy^j\colon  i,j \in \ZZ_+,\; 0\leq i+j\leq k\}$ in the rest of the paper, in the notation $(M_k)|_{Q_1,Q_2}$ all monomials from $Q_1$, $Q_2$ are meant capitalized, i.e., $x^iy^j\mapsto X^iY^j$.
\end{remark}

\subsection{Atomic measures}
For $x\in \RR^m$, $\delta_x$ 
stands for the Dirac measure supported on $x$.
By a \textbf{finitely atomic positive measure} on $\RR^m$ we mean a measure of the form $\mu=\sum_{j=0}^\ell \rho_j \delta_{x_j}$, 
where $\ell\in \NN$, each $\rho_j>0$ and each $x_j\in \RR^m$. The points $x_j$ are called 
\textbf{atoms} of the measure $\mu$ and the constants $\rho_j$ the corresponding \textbf{densities}.

\subsection{Riesz functional}
The functional $L_{\beta}:\mbb{R}[x,y]_{\leq d}\to \RR$ , defined by 
$$
	L_{\beta}(p):=\sum_{\substack{i,j\in \ZZ_+,\\ 0\leq i+j\leq d}} a_{i,j}\beta_{i,j},\qquad \text{where}\quad p=
	\sum_{\substack{i,j\in \ZZ_+,\\ 0\leq i+j\leq d}} a_{i,j}x^iy^j,
$$
is called the \textbf{Riesz functional of the sequence $\beta$}.

\subsection{Affine linear transformations} \label{affine linear-trans}

Let $K\subseteq \RR^2$. The existence of a $K$--rm
for $\beta$ is invariant under invertible affine linear transformations (alt\textit{s}) of the form 
\begin{equation}
	\label{130922-1215}
		\phi(x,y)=(\phi_1(x,y), \phi_2(x,y)):=(a+bx+cy,d+ex+fy),\; (x,y)\in \RR^{2},
\end{equation}
$a,b,c,d,e,f\in \RR$ with $bf-ce \neq 0$ in the sense which we now explain.
We denote by $\widetilde \beta$ the $2$--dimensional sequence defined by
	$$\widetilde \beta_{i,j}=L_{\beta}\big(\phi_1(x,y)^i \cdot \phi_2(x,y)^j\big),$$
where
	$L_{\beta}$ is the Riesz functional of $\beta$.

\begin{proposition}
	[{\cite[Proposition 1.9]{CF05}}] 
		\label{251021-2254}
	Assume the notation above and let $d=2k$.
	\begin{enumerate}
		\item $M_k(\beta)$ is psd if and only if $M_k(\widetilde \beta)$ is psd.
		\item $\Rank M_k(\beta)=\Rank M_k(\widetilde \beta)$.
		\item $M_k(\beta)$ is rg if and only if $M_k(\widetilde \beta)$ is rg.
		\item
			\label{291021-2333} 
				$\beta$ admits a $r$--atomic $K$--rm
				if and only if 
				$\widetilde \beta$ admits a $r$--atomic $\phi(K)$--rm.
	\end{enumerate}
\end{proposition}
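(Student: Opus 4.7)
The plan is to exhibit a single invertible matrix $J$ that conjugates the two moment matrices, after which (1)--(3) become immediate linear-algebra statements and (4) follows from the pushforward of measures.

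First I would record the key identity between the Riesz functionals: for every $p \in \RR[x,y]_{\leq 2k}$,
$$L_{\widetilde\beta}(p) = L_\beta(p\circ\phi),\qquad\text{where }(p\circ\phi)(x,y):=p(\phi_1(x,y),\phi_2(x,y)),$$
which is just the defining formula $\widetilde\beta_{i,j}=L_\beta(\phi_1^i\phi_2^j)$ extended by linearity. Because $\phi$ is an invertible alt, $\deg(p\circ\phi)=\deg p$, so the substitution map $S_\phi:p\mapsto p\circ\phi$ is a linear automorphism of $\RR[x,y]_{\leq k}$. Let $J$ denote its matrix in the degree-lexicographic monomial basis of $\RR[x,y]_{\leq k}$, so that $[p\circ\phi]_k=J[p]_k$ where $[\cdot]_k$ denotes the coefficient vector. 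The map $S_\phi$ is invertible (with inverse $S_{\phi^{-1}}$), hence $J$ is invertible.

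Using the standard identity $L_\beta(fg)=[f]_k^T M_k(\beta)[g]_k$ for $f,g\in \RR[x,y]_{\leq k}$, and applying it to $L_{\widetilde\beta}$ via the relation above,
$$[f]_k^T M_k(\widetilde\beta)[g]_k=L_{\widetilde\beta}(fg)=L_\beta((f\circ\phi)(g\circ\phi))=(J[f]_k)^T M_k(\beta)(J[g]_k),$$
so $M_k(\widetilde\beta)=J^T M_k(\beta)J$ with $J$ invertible. Parts (1) and (2) are then immediate, since psd-ness and rank are preserved under invertible congruence. For (3), note that $\widetilde p\in \RR[x,y]_{\leq k}$ is a column relation of $M_k(\widetilde\beta)$ iff $M_k(\beta)J[\widetilde p]_k=0$ iff $\widetilde p\circ\phi$ is a column relation of $M_k(\beta)$; since $S_\phi$ is a degree-preserving ring isomorphism of $\RR[x,y]_{\leq k}$, it sends products to products, and therefore the defining closure condition for rg transfers across.

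For (4), given an $r$-atomic $K$-rm $\mu=\sum_{j=1}^r\rho_j\delta_{x_j}$ of $\beta$, the pushforward $\phi_*\mu=\sum_{j=1}^r\rho_j\delta_{\phi(x_j)}$ is an $r$-atomic positive measure on $\phi(K)$, and for $0\leq i+j\leq d$,
$$\int x^iy^j\,d(\phi_*\mu)=\int \phi_1^i\phi_2^j\,d\mu=L_\beta(\phi_1^i\phi_2^j)=\widetilde\beta_{i,j},$$
so $\phi_*\mu$ represents $\widetilde\beta$; the converse direction is obtained by applying the same construction to $\phi^{-1}$, which is again an invertible alt. There is no real obstacle in this proof; the only bookkeeping care needed is to verify that the same monomial basis is used on both sides so that the conjugation formula $M_k(\widetilde\beta)=J^T M_k(\beta)J$ holds literally, which is automatic from the definitions.
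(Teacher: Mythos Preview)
Your proof is correct and is essentially the standard argument; in fact the paper does not supply its own proof of this proposition at all---it is quoted from \cite[Proposition~1.9]{CF05} and used as a black box. The congruence $M_k(\widetilde\beta)=J^{T}M_k(\beta)J$ via the substitution operator $p\mapsto p\circ\phi$ is exactly the mechanism behind the cited result, and your treatment of (4) by pushforward is the expected one. One cosmetic point: $\RR[x,y]_{\leq k}$ is not a ring, so calling $S_\phi$ a ``ring isomorphism of $\RR[x,y]_{\leq k}$'' is imprecise; what you use (and what is true) is that $S_\phi$ is the restriction to $\RR[x,y]_{\leq k}$ of a degree-preserving $\RR$-algebra automorphism of $\RR[x,y]$, so it is multiplicative whenever the product stays in $\RR[x,y]_{\leq k}$---which is precisely the hypothesis in the rg condition.
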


In case $d=2k-1$ is odd, the block $M[k,k]$ of $M_k(\beta)$ is undefined. We say that $M_k(\beta)$ is \textbf{psd completable} if there exists an extension $\beta^{(2k)}$ of $\beta$ 
such that $M_k(\beta^{(2k)})$ is psd. 

\begin{proposition}
		\label{151122-1555}
	Assume the notation above and let $d=2k-1$, $k\in \NN$.
	\begin{enumerate}
		\item 
			$M_k(\beta)$ is psd completable 
			if and only if 
			$M_k(\widetilde \beta)$ is psd completable.
		\item Let $r\in \NN$.
			There exists an extension $\beta^{(2k)}$ of $\beta$ such that $\Rank M_k(\beta^{(2k)})=r$
			if and only if	
			there exists an extension $\widetilde \beta^{(2k)}$ of $\widetilde \beta$ such that $\Rank M_k(\widetilde\beta^{(2k)})=r$.
		\item Let $r\in \NN$.
			There exists an extension $\beta^{(2k)}$ of $\beta$ such that $M_k(\beta^{(2k)})$ is rg
			if and only if	
			there exists an extension $\widetilde \beta^{(2k)}$ of $\widetilde \beta$ such that $M_k(\widetilde\beta^{(2k)})$ is rg.
		\item
			\label{151122-1321-pt4} 
				$\beta$ admits a $r$--atomic $K$--rm
				if and only if 
				$\widetilde \beta$ admits a $r$--atomic $\phi(K)$--rm.
	\end{enumerate}
\end{proposition}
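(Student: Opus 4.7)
The plan is to reduce all four parts to the even-degree statement, Proposition \ref{251021-2254}, via a natural bijection between degree-$2k$ extensions. First, I would note that the assignment $T_\phi\colon \gamma\mapsto\widetilde\gamma$ makes sense on degree-$2k$ sequences and is a bijection there, with inverse $T_{\phi^{-1}}$ well-defined because $\phi$ is invertible. The crucial additional property is that $T_\phi$ is compatible with truncation to degree $2k-1$: for any extension $\beta^{(2k)}$ of $\beta$ and any $(i,j)$ with $i+j\leq 2k-1$, the polynomial $\phi_1(x,y)^i\phi_2(x,y)^j$ has degree at most $2k-1$, so
$$\widetilde{\beta^{(2k)}}_{i,j}=L_{\beta^{(2k)}}\!\left(\phi_1^i\phi_2^j\right)=L_\beta\!\left(\phi_1^i\phi_2^j\right)=\widetilde\beta_{i,j}.$$
Hence $T_\phi$ restricts to a bijection from the set of extensions of $\beta$ onto the set of extensions of $\widetilde\beta$.

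Having set up this bijection, I would derive parts (1)--(3) as direct corollaries. For (1), $M_k(\beta)$ is psd completable iff some extension $\beta^{(2k)}$ satisfies $M_k(\beta^{(2k)})\succeq 0$; by Proposition \ref{251021-2254}(1) applied to the pair $(\beta^{(2k)},\widetilde{\beta^{(2k)}})$, this is equivalent to $M_k(\widetilde{\beta^{(2k)}})\succeq 0$, and, by the bijection just described, to $M_k(\widetilde\beta)$ being psd completable. Parts (2) and (3) are handled by the identical argument with rank preservation or the rg property in place of psd, invoking Proposition \ref{251021-2254}(2) and (3), respectively.

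For part (4), I would bypass the extension machinery altogether: if $\mu$ is an $r$-atomic $K$-rm for $\beta$, then the pushforward $\phi_*\mu$ is an $r$-atomic $\phi(K)$-rm for $\widetilde\beta$, since $\int \phi_1^i\phi_2^j\,d\mu=\widetilde\beta_{i,j}$ by definition of $\widetilde\beta$ and $\phi_*\mu$ is supported on $\phi(K)$ with the same number of atoms as $\mu$; the converse uses $\phi^{-1}$ in place of $\phi$. The only genuine bookkeeping subtlety in the whole argument is to check that $T_\phi$ interacts cleanly with the undefined block $M[k,k]$ characteristic of the odd case, and this is precisely the content of the truncation compatibility displayed above. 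Beyond that there is no real obstacle, since Proposition \ref{251021-2254} does all the heavy lifting.
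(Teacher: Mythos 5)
Your proof is correct and follows essentially the same route as the paper: both reduce parts (1)--(3) to Proposition \ref{251021-2254} via the correspondence $\beta^{(2k)}\mapsto\widetilde\beta^{(2k)}$ given by $\phi$ (resp.\ $\phi^{-1}$) and the Riesz functional of the extension, your explicit check that this correspondence restricts to the original sequences in degrees $\leq 2k-1$ being exactly the point the paper leaves implicit. The only cosmetic difference is in part (4), where you push the measure forward directly instead of first generating the extension $\beta^{(2k)}$ from the measure and then invoking part \eqref{291021-2333} of Proposition \ref{251021-2254}; both arguments are immediate and equivalent.
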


\begin{proof}
	Proposition \ref{151122-1555} follows easily from Proposition \ref{251021-2254} by defining the extension $\widetilde \beta^{(2k)}$ of $\widetilde \beta$ from
	the extension $\beta^{(2k)}$ of $\beta$ using the same transformation $\phi$ together with the Riesz functional $L_{\beta^{(2k)}}$ of the extension.	
	Similarly, for the other direction one uses $\phi^{-1}$ together with the Riesz functional $L_{\widetilde \beta^{(2k)}}$ of the extension.	
	For \eqref{151122-1321-pt4} we notice that any $r$--atomic $K$--rm	
 	of the sequence $\beta$ generates the extension $\beta^{(2k)}$
	and then use \eqref{291021-2333} of Proposition \ref{251021-2254}.
\end{proof}

\subsection{Hankel matrices and univariate sequences}\label{SubS2.2}
Let $k\in \NN$.
For 
		$v=(v_0,\ldots,v_{2k} )\in \RR^{2k+1}$
we define the corresponding Hankel matrix as
	\begin{equation}\label{vector-v}
		A_{v}:=\left(v_{i+j} \right)_{i,j=0}^k
					=\left(\begin{array}{ccccc} 
							v_0 & v_1 &v_2 & \cdots &v_k\\
							v_1 & v_2 & \iddots & \iddots & v_{k+1}\\
							v_2 & \iddots & \iddots & \iddots & \vdots\\
							\vdots 	& \iddots & \iddots & \iddots & v_{2k-1}\\
							v_k & v_{k+1} & \cdots & v_{2k-1} & v_{2k}
						\end{array}\right)
					\in S_{k+1}.
	\end{equation}
Let
	$\mbf{v_j}:=\left( v_{j+\ell} \right)_{\ell=0}^k$ be the $(j+1)$--th column of $A_{v}$, $0\leq j\leq k$.
	In this notation, we have that
		$$A_{v}=\left(\begin{array}{ccc} 
								\mbf{v_0} & \cdots & \mbf{v_k}
							\end{array}\right).$$
As in \cite{CF91}, the \textbf{rank} of $v$, denoted by $\Rank v$, is defined by
	$$\Rank v=
	\left\{\begin{array}{rl} 
		k+1,&	\text{if } A_{v} \text{ is nonsingular},\\
		\min\left\{i\colon \bf{v_i}\in \Span\{\bf{v_0},\ldots,\bf{v_{i-1}}\}\right\},&		\text{if } A_{v} \text{ is singular}.
	 \end{array}\right.$$
We denote
\begin{itemize} 
	\item the upper left-hand corner $\left(v_{i+j} \right)_{i,j=0}^m\in S_{m+1}$ of $A_{v}$ of size $m+1$ by 
		$A_{v}(m)$.
	\item the lower right-hand corner $\left(v_{i+j} \right)_{i,j=k-m}^k\in S_{m+1}$ of $A_{v}$ of size $m+1$ by
		$A_{v}[m]$.
\end{itemize}
We write 
	$$v^{(\rev)}:=(v_{2k},v_{2k-1},\ldots,v_0)$$ 
for the \textbf{reversed sequence} of $v$.

A sequence $v=(v_0,\ldots,v_{2k})$ is called
\textbf{positively recursively generated (prg)} if, denoting 
	$r=\Rank v$,
it holds that 
	$A_v(r-1)\succ 0$
and in case $r<k+1$, also
 	$$v_j=\sum_{i=0}^{r-1} \varphi_i v_{j-r+i} \quad \text{for}\quad j=r,\ldots,2k,$$
where		
\begin{equation}
	\label{211222-1818}
	\left(\begin{array}{ccc}\varphi_0 & \cdots & \varphi_{r-1}\end{array}\right):=
						A_{v}(r-1)^{-1}
						\left(\begin{array}{ccc}v_r & \cdots &v_{2r-1}\end{array}\right)^{T}.
\end{equation}

A sequence $v=(v_0,\ldots,v_{2k} )$ is called
\textbf{negatively recursively generated (nrg)}  if, denoting
	$r=\Rank v^{(\rev)}$,
it holds that
	$A_v[r-1]\succ 0$
and in case $r<k+1$, also
	$$v_{2k-r-j}=\sum_{i=0}^{r-1}\psi_i v_{2k-r+1-j+i}
	\quad \text{for}\quad j=0,\ldots,2k-r,$$
where
$$
	\left(\begin{array}{ccc} \psi_0 & \cdots & \psi_{r-1}\end{array}\right):=A_{v}[r-1]^{-1}
	\left(\begin{array}{ccc}v_{2k-2r+1} & \cdots & v_{2k-r}\end{array}\right)^{T}.
$$
\subsection{Univariate truncated moment problem}

Given a real sequence 
	$$\gamma^{ (k_1,k_2)}=(\gamma_{k_1},\gamma_{k_1+1},\ldots,\gamma_{k_2-1},\gamma_{k_2})$$ 
of degree $(k_1,k_2)$, $k_1,k_2\in \ZZ$, $k_1\leq k_2$,
a subset $K$ of $\RR$, the \textbf{truncated moment problem supported on $K$ for $\gamma^{ (k_1,k_2)}$ ($(K,k_1,k_2)$--TMP)}
asks to characterize the existence of a positive Borel measure $\mu$ on $\RR$ with support in $K$, such that
	\begin{equation}
		\label{moment-measure-cond-univ}
			\gamma_{i}=\int_{K}x^i d\mu\quad \text{for}\quad i\in \ZZ,\;k_1\leq i\leq k_2.
	\end{equation}
If such a measure exists, we say that $\gamma^{ (k_1,k_2)}$ has a rm supported on $K$ and $\mu$ is its $K$--\textbf{rm}. 

The $(\RR,0,k)$--TMP with $k\in \ZZ_+$ is the usual \textbf{truncated Hamburger moment problem (THMP) of degree $k$}, which was solved in full generality in \cite{CF91}.
Algorithm \ref{alg-090323-1157} is a numerical procedure to determine the existence and extract a rm. It is an adaptation of \cite[Algorithm 4.2]{Las09}, which 
is an algorithm to extract a rm in the multivariate TMP under the assumption that a rm exists, to the univariate setting with the addition of deciding whether a rm
exists. For the latter the solution from \cite{CF91} is used.

\begin{algorithm}[H]
	\caption{Solution to the $(\RR,0,2k)$--TMP}
	\label{alg-090323-1157}
	\begin{algorithmic}[0]
		\State \textbf{Input:} A univariate sequence $\gamma=(\gamma_0,\gamma_1,\ldots,\gamma_{2k})$.
		\State \textbf{Output:} A negative answer to the existence of a rm for $\gamma$ or 
			the points $x_1,\ldots,x_{r}\in \RR$ and densities $\rho_{1},\ldots,\rho_{r}$ such that $\mu=\sum_{j=1}^{r} \rho_j \delta_{x_j}$ is a $\RR$--rm for $\gamma$.
		\medskip

		\State \textbf{Algorithm:}
		\State \textbf{1.} Try to compute Cholesky factorization $VV^T$ of $A_\gamma$, where $V\in \RR^{(k+1)\times r}$ is a lower triangular matrix. 
			In case of a failure, a rm does not exist.
		\State \textbf{2.1} If $r<k+1$, reduce $V$ to a column echelon form $U$ using Gaussian elimination on columns.
		\State \textbf{2.2} Else $r=k+1$. Choose $\gamma_{2k+1}\in \RR$ arbitrarily and solve a linear system
				$Vw=
				\begin{pmatrix}
					\gamma_{k+1} & \ldots & \gamma_{2k+1}
				\end{pmatrix}^T.
				$
				Reduce 
					$\widetilde V=\begin{pmatrix} V^T & w\end{pmatrix}^T$ 
				to a column echelon form 
					$U$ 
				using Gaussian elimination on columns.
		\State \textbf{3.1.} If $r<k+1$ and $U$ is of the form 
				$U=
				\begin{pmatrix}
					I_r &
					B
				\end{pmatrix}^T$
				for some $B\in \RR^{r\times (k+1-r)}$ or $r=k+1$,
				define $N$ as a $\RR^{r\times r}$ matrix 
				consisting of rows $2,\ldots,r+1$ of $U$.
		\State \textbf{3.2.} If $r<k+1$ and $U$ is not as in 3.1, then a rm for $\gamma$ does not exist.
		\State \textbf{4.} Compute the Schur decomposition $N=QTQ^T$ of $N$.		
				The diagonal elements $x_1,\ldots,x_r$ of $T$ are atoms in a rm for $\gamma.$
		\State \textbf{5.}
				Solve the system 
				$W\rho=
				\begin{pmatrix}
					\gamma_{0} & \ldots & \gamma_{r-1}
				\end{pmatrix}^T,
				$	
				where $W$ is the $r\times r$ Vandermonde matrix with $i$-th row equal to $\begin{pmatrix} x_j^{i-1} \end{pmatrix}_{j=1,\ldots,r}$.
				The coordinates $\rho_j$, $j=1,\ldots,r$, of $\rho$ are the densities of a rm measure for $\gamma$. 
	\end{algorithmic}
\end{algorithm}

\begin{remark}
	\begin{enumerate}	
	\item 
	\textbf{Correctness of Algorithm \ref{alg-090323-1157}:} Step 1 checks whether $A_\gamma$ is psd. Step 2.1 only changes a basis of the column space of $V$.	
	If Step 2.2 applies, then on can choose $\gamma_{2k+1}$ arbitrarily and then compute $\gamma_{2k+2}$ such that for the extended sequence
	$\widetilde \gamma=(\gamma,\gamma_{2k+1},\gamma_{2k+2})$ Step 2.1 applies, i.e., $A_{\widetilde \gamma}$ is psd, 
	$\Rank A_{\widetilde \gamma}=k+1<k+2$ and its Cholesky decomposition is equal to $\widetilde V(\widetilde V)^T$.
     In the situation of Step 3.1, $A_\gamma(r)$ is pd and hence $\gamma$ is prg. If Step 3.2 applies, the latter is not true. 
	By \cite[Theorem 3.9]{CF91}, a rm exists iff $\gamma$ is prg.	
	Since $U(v^{(0,r-1)}_{x_i})=v^{(0,r-1)}_{x_i}$ for every atom $x_i$ in a rm for $\gamma$,
	where 
		$v^{(0,p)}_{x}=\begin{pmatrix}1 & x \ldots & x^{p}\end{pmatrix}^T$,
	it follows that
		$N(v^{(0,r-1)}_{x_i})=x_iv^{(0,r-1)}_{x_i}$
	and the atoms of a rm are the eigenvalues of $N$. These are computed in Step 4 and finally, a Vandermonde system in Step 5 determines the densities.
	\item
	\textbf{Uniqueness of a rm:} 
	If $r<k+1$ and Step 3.1 applies, then the rm for $\gamma$ is unique. Otherwise, in case $r=k+1$, there are infinitely many choices of $(k+1)$--atomic rm\textit{s}. 
	Each choice of $\gamma_{2k+1}$ gives a different one.
	\item 
	\textbf{An adaptation of Algorithm \ref{alg-090323-1157} to solving $(\RR,0,2k+1)$--TMP:} One has to do the following modifications:
	\begin{enumerate}
		\item Let $\widetilde \gamma=(\gamma_0,\gamma_1,\ldots,\gamma_{2k+1})$ and $\gamma$ 
			as in Algorithm \ref{alg-090323-1157}.
		\item If Step 3.1 applies with $r<k+1$, one has to check whether the system
			$Vw=
				\begin{pmatrix}
					\gamma_{k+1} & \ldots & \gamma_{2k+1}
				\end{pmatrix}^T
				$
			is solvable. If not, a rm does not exists. Otherwise it does and compute it as in Algorithm \ref{alg-090323-1157}.
		\item If Step 3.1 applies with $r=k+1$, one does not choose $\gamma_{2k+1}$, since it is already given. Hence, the minimal $(k+1)$--atomic rm is unique.
	\end{enumerate}
	\end{enumerate}
\end{remark}

The $(\RR\setminus\{0\},k_1,k_2)$--TMP with $k_1,k_2\in \ZZ$, $k_1<0<k_2$
is the \textbf{strong truncated Hamburger moment problem (STHMP) of degree $(k_1,k_2)$}. 
For even $k_1$ and $k_2$ the solution is \cite[Theorem 3.1]{Zal22b}, but the technique in the proof
can be extended to establish also the cases, where $k_1$, $k_2$ are not both even.

Let 
	$\RR[x^{-1},x]=
		\left\{ \sum_{i=r_1}^{r_2} a_i x^i \colon a_i \in \RR, r_1,r_2\in \ZZ, r_1\leq r_2\right\}$
be the set of Laurent polynomials.
For $k_1,k_2\in \ZZ$, $k_1\leq k_2$, we denote by
	$V_{(k_1,k_2)}$ 
a vector subspace in $\RR[x^{-1},x]$ generated by the 
set 
	$\{x^{k_1},x^{k_1+1},\ldots,x^{k_2}\}$.
For a sequence $\gamma:=\gamma^{ (k_1,k_2)}$ the functional
	 $L_{\gamma}:V_{(k_1,k_2)}\to \RR$ , defined by 
$$
	L_{\gamma}(p):=\sum_{k_1\leq i\leq k_2} a_{i}\gamma_i,\qquad \text{where}\quad 
p=\sum_{k_1\leq i\leq k_2} a_{i}x^i,
$$
is called the \textbf{Riesz functional of the sequence $\gamma$}.

\begin{remark}[An adaptation of Algorithm \ref{alg-090323-1157} to solving $(\RR,-2k_1,2k_2)$--TMP, $k_1,k_2\in \NN$] 
	One has to do the following modifications (see \cite[Theorem 3.1]{Zal22b}):
	\begin{enumerate}
		\item The input is a sequence $\gamma=(-\gamma_{2k_1},\gamma_{-2k_1+1},\ldots,2k_2)$.
			The output is a negative answer to the existence of a rm for $\gamma$ or 
			the points $x_1,\ldots,x_{r}\in \RR$ and densities $\rho_{1},\ldots,\rho_{r}$ such that $\mu=\sum_{j=1}^{r} \rho_j \delta_{x_j}$ is a $\RR$--rm for $\gamma$.
		\item One forms a sequence
			$\widetilde\gamma=(\widetilde\gamma_{0},\ldots,\widetilde\gamma_{2(k_1+k_2)})$,
			where $\widetilde \gamma_{i}=\gamma_{i-2k_1}$, $i=0,1,\ldots,2(k_1+k_2)$,
			and do all computations from Algorithm \ref{alg-090323-1157} on $\widetilde\gamma$.
		\item If Step 2.2 applies, one chooses $\gamma_{2(k_1+k_2)+1}$ arbitrarily except for the number
			$v^TC^{-1}v$ in case $C$ is invertible, 
			where $C$ is a submatrix of $A_{\widetilde\gamma}$ consisting of rows $1,\ldots,k_1+k_2$ and columns $2,\ldots,k_1+k_2+1$,
			and $v=\begin{pmatrix}\gamma_{-k_1+k_2+1} & \cdots & \gamma_{-2k_2}\end{pmatrix}^T$ is a vector.
		\item If Step 3.1 applies with $r<k_1+k_2+1$, one has to check whether $(r+1)$-th column of $A_{\widetilde \gamma}$ is in the span of	
			columns $2,\ldots,r$. If yes, then a rm does not exist. Otherwise it does. 
			Equivalently, one can compute the Cholesky decomposition $V_1V_1^T$ of the restriction of 
			$A_{\widetilde \gamma}$ to the principal submatrix on rows and columns $2,3,\ldots,r+1$
			and see whether $V_1$ has rank $r$ or not.
		\item If  Step 3.1 applies with $r=k_1+k_2+1$, then a rm exists.
		\item The atoms are computed as in Step 4.
		\item 
			The densities in Step 5	are obtained by solving the system
				$$W\rho=
				\begin{pmatrix}
					\gamma_{-2k_1} & \gamma_{-2k_1+1}\ldots & \gamma_{-2k_1+r-1}
				\end{pmatrix}^T,
				$$	
				where $W$ is the $r\times r$ Vandermonde matrix with $i$-th row equal to $\begin{pmatrix} x_j^{-2k_1+i-1} \end{pmatrix}_{j=1,\ldots,r}$.
				The coordinates $\rho_j$, $j=1,\ldots,r$, of $\rho$ are the densities of a rm measure for $\gamma$.
	\end{enumerate}
\end{remark}


\section{The TMP on the curves $y=q(x)$}
\label{251122-2146}

In this section we study the $K$--TMP for $K$ being a curve of the form $y=q(x)$, $q\in \RR[x]$.
In Subsection \ref{251122-2112} we first give a solution of the $K$--TMP, $\deg q\geq 3$, 
based on the  size  of positive semidefinite extensions of the moment matrix needed and also bound the number of atoms in
the $K$--rm
with the smallest number of atoms (see Theorem \ref{240822-1317} for the even degree and 
Theorem \ref{240822-1317-odd} for the odd degree sequences). 
As a result we obtain a sum-of-squares representation for polynomials, which are strictly positive on $K$ (see Corollary \ref{290822-1100}). 
This improves bounds on the degrees in the previously known result \cite[Proposition 6.3]{Fia11}.
In Subsection \ref{251122-2113} we apply the technique from the proofs of the results from Subsection \ref{251122-2112} 
to give a concrete solution to the TMP on the curve $y=x^2$,
which is an alternative solution to the one from \cite{CF04} in the even case (see Theorem \ref{090622-1549})
and is new in the odd case (see Theorem \ref{090622-1549-odd}). 
In Subsection \ref{251122-2111} we give a solution to the $K$--TMP based on a feasibility of the corresponding linear matrix inequality 
(see Theorem \ref{121122-1341}). Finally, in Subsection \ref{251122-2119} we concretely solve the TMP on the curve $y=x^3$ 
in the odd degree case (see Theorem \ref{21122-0855-odd}).

\subsection{Solution to the TMP in terms of psd extensions of $M_k$, bounds on the number of atoms in the minimal measure and a Positivstellensatz}
\label{251122-2112}

\begin{theorem}[Even case]
	\label{240822-1317}
	Let $K:=\{(x,y)\in \RR^2\colon y=q(x)\}$, where $q\in \RR[x]$ with $\deg q\geq 3$,
	and
		$\beta:=\beta^{(2k)}=(\beta_{i,j})_{i,j\in \ZZ_+,i+j\leq 2k}$ with $k\geq \deg q$.
	The following statements are equivalent:
	\begin{enumerate}
		\item
		 	\label{240822-1317-pt1}
				$\beta$ has a $K$--representing measure.
		\item
		 	\label{240822-1317-pt4}
				$\beta$ has a $s$--atomic $K$--representing measure for some $s$ satisfying 
 					$$\Rank M_k\leq s\leq k\deg q.$$
		\item 
		 	\label{240822-1317-pt2}
				$M_k$ satisfies $Y=q(X)$ and admits a positive semidefinite, recursively generated extension $M_{k+\deg q-2}$.
		\item
		 	\label{240822-1317-pt3}
				$M_k$ satisfies $Y=q(X)$ and admits a positive semidefinite extension 
					$M_{k+\deg q-1}$.
	\end{enumerate}
\end{theorem}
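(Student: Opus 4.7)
My plan is to establish the equivalences via the cycle $(4) \Rightarrow (2) \Rightarrow (1) \Rightarrow (3) \Rightarrow (4)$, with the substantive new content concentrated in the main implication $(4) \Rightarrow (2)$, carried out by the \emph{reduction to the univariate setting} technique from \cite{Zal21,Zal22a,Zal22b}. The easy directions $(2) \Rightarrow (1)$ and $(1) \Rightarrow (3)$ follow from the standard fact that a representing measure produces moment matrices that are psd and rg at every order, together with \cite{CF96}, which supplies the column relation $Y = q(X)$ from $\supp\mu \subseteq K$. For $(3) \Rightarrow (4)$, I would use rg of $M_{k+\deg q - 2}$ and $Y = q(X)$ to determine all new moments $\beta^{\rm ext}_{a,b}$ with $a \geq \deg q$ at the next two degree levels, and choose the few remaining ``free'' moments (those with $a < \deg q$) so as to preserve positive semidefiniteness.

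For the core implication $(4) \Rightarrow (2)$, first I would propagate the column relation from $M_k$ to $M_{k+\deg q -1}$: letting $v$ be the zero-padded coefficient vector of $Y - q(X)$, the identity $v^{\top} M_k v = 0$ combined with $M_{k+\deg q -1} \succeq 0$ forces $M_{k+\deg q -1} v = 0$, so that $Y = q(X)$ already holds as a column relation of $M_{k+\deg q -1}$. Next I would extract the univariate sequence $\gamma = (\gamma_i)_{i=0}^{2(k+\deg q -1)}$ with $\gamma_i := \beta^{\rm ext}_{i,0}$; its Hankel matrix $A_\gamma$ is the principal submatrix of $M_{k+\deg q -1}$ indexed by $\{X^i\}_{i=0}^{k+\deg q -1}$, hence psd.

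I would then show that $\gamma$ is positively recursively generated: the propagated relation expresses each $Y^b X^a$-column of $M_{k+\deg q -1}$ as a combination of $X^c$-columns, so $\Rank M_{k+\deg q -1} = \Rank A_\gamma$, and any column dependency among the $X$-columns lifts to $M_{k+\deg q -1}$ by the same kernel-extension argument, with the Hankel shift preserving the recurrence because the required row indices stay within $\{0,\ldots,k+\deg q -1\}$. Algorithm \ref{alg-090323-1157} (equivalently \cite[Theorem 3.9]{CF91}) then produces an $r$-atomic $\RR$-representing measure $\nu = \sum_{j=1}^r \rho_j \delta_{x_j}$ of $\gamma$ with $r = \Rank A_\gamma$. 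The pushforward $\mu := \sum_{j=1}^r \rho_j \delta_{(x_j, q(x_j))}$ is supported on $K$, and iterating the propagated identity $\beta_{a,b+1} = \sum_\ell c_\ell \beta_{a+\ell, b}$ (valid whenever $a + b \leq k + \deg q - 1$) yields $\int x^i y^j\, d\mu = L_\nu(x^i q(x)^j) = \beta_{i,j}$ for every $(i,j)$ with $i + j \leq 2k$. The atom count satisfies $\Rank M_k \leq r \leq k + \deg q \leq k \deg q$, since $k \geq \deg q \geq 3$, delivering (2).

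The main obstacle will be verifying prg of $\gamma$ from the psd hypothesis of (4) alone, and checking the compatibility identity $\beta_{i,j} = L_\nu(x^i q(x)^j)$ at the boundary of the data, where $i + j \deg q$ can considerably exceed $2(k+\deg q -1)$. The chosen extension size $k + \deg q - 1$ is exactly the threshold at which the propagated iteration of $Y = q(X)$ closes on every $\beta_{i,j}$ with $i + j \leq 2k$ and simultaneously controls the column structure of $A_\gamma$ sharply enough to force prg; this is precisely what delivers the improvement from Fialkow's quadratic bound \cite{Fia11} to the linear-in-$\deg q$ bound stated here.
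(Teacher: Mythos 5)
Your high-level idea---use the column relation $Y=q(X)$ to reduce to a univariate Hamburger problem and invoke \cite[Theorem 3.9]{CF91}---is the paper's idea, but two steps of your plan contain genuine gaps, one of them fatal to the reduction itself. Write $\ell=\deg q$. Under the substitution $y=q(x)$ the bivariate moment $\beta_{i,j}$ corresponds to a univariate moment of degree $i+j\ell$, which ranges up to $2k\ell$; the correct univariate sequence therefore has length about $2k\ell$, and the paper obtains its Hankel matrix as $P^{-1}\big((M_{k+\ell-2})|_{\cB}\big)(P^{-1})^{T}$ for the basis $\cB=\{y^jx^i\colon 0\le i\le \ell-1,\ldots\}$ of the column space, not from the pure powers of $X$. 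Your sequence $\gamma_i=\beta^{\mathrm{ext}}_{i,0}$, $i\le 2(k+\ell-1)$, and the principal submatrix of $M_{k+\ell-1}$ on $\{1,X,\ldots,X^{k+\ell-1}\}$ capture only a small fragment of the data. In particular, your claim that every column $X^aY^b$ of $M_{k+\ell-1}$ is a combination of the columns $X^c$ is false: recursive generation rewrites $X^aY^b$ as $X^aq(X)^b$, a combination of monomials $X^s$ with $s$ up to $a+b\ell$, most of which are not columns of $M_{k+\ell-1}$. Consequently the identity $\Rank M_{k+\ell-1}=\Rank A_\gamma$ fails (already $\Rank M_k$ can equal $\binom{k+2}{2}-1$, e.g.\ $9$ for $k=\ell=3$, which exceeds your bound $k+\ell=6$), the measure produced from your $A_\gamma$ need not represent any $\beta_{i,j}$ with $j\ge 1$, and the asserted atom count $r\le k+\ell$ cannot be right. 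Even with the correct reduction, the natural upper bound is $k\ell+1$, not $k\ell$: the paper must first normalize $q_{\ell-1}=0$ by an affine change of variables and then perturb the single univariate moment $\gamma_{2k\ell-1}$, on which no $\beta_{i,j}$ depends, to force a rank drop of the Hankel matrix; your proposal does not address this last atom at all.

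The second gap is structural: your cycle routes through the implication from statement \eqref{240822-1317-pt2} to statement \eqref{240822-1317-pt3}, i.e.\ that a psd, recursively generated $M_{k+\ell-2}$ admits a psd extension $M_{k+\ell-1}$, which you dispose of by ``choosing the free moments so as to preserve positive semidefiniteness.'' This is not an easy direction. A psd, recursively generated moment matrix need not admit any psd moment-matrix extension; in the univariate picture a psd Hankel matrix extends to a larger psd Hankel matrix precisely when the sequence is positively recursively generated, and positive semidefiniteness alone does not give this. Establishing that leg is essentially as hard as establishing the existence of a representing measure, so your cycle is circular in disguise. The paper orients the cycle the other way, $\eqref{240822-1317-pt1}\Rightarrow\eqref{240822-1317-pt3}\Rightarrow\eqref{240822-1317-pt2}\Rightarrow\eqref{240822-1317-pt4}\Rightarrow\eqref{240822-1317-pt1}$, so that the only comparison between the two extension statements is the easy one, namely that a psd $M_{k+\ell-1}$ forces $M_{k+\ell-2}$ to be recursively generated, by \cite[Theorem 3.14]{CF96}.
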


\begin{theorem}[Odd case]
	\label{240822-1317-odd}
	Let $K:=\{(x,y)\in \RR^2\colon y=q(x)\}$, where $q\in \RR[x]$ with $\deg q\geq 3$,
	and 
		$\beta^{(2k-1)}=(\beta_{i,j})_{i,j\in \ZZ_+,i+j\leq 2k-1}$ with $k\geq \deg q$.
	Then the following statements are equivalent:
	\begin{enumerate}
		\item
		 	\label{240822-1317-pt1-odd}
				$\beta$ has a $K$--representing measure.
		\item
		 	\label{240822-1317-pt4-odd}
				$\beta$ has a $s$--atomic $K$--representing measure for some $s$ satisfying 
 					$$\Rank M_{k-1}\leq s\leq k\deg q-\Big\lceil\frac{\deg{g}}{2}\Big\rceil.$$
		\item 
		 	\label{240822-1317-pt2-odd}
				$\beta^{(2k-1)}$ can be extended to a sequence $\beta^{(2k)}$
				such that $M_{k}$ satisfies $Y=q(X)$
				and admits a positive semidefinite, recursively generated extension $M_{k+\deg q-2}$.
		\item
		 	\label{240822-1317-pt3-odd}
				$\beta^{(2k-1)}$ can be extended to a sequence $\beta^{(2k)}$
				such that $M_{k}$ satisfies $Y=q(X)$
				and admits a positive semidefinite extension $M_{k+\deg q-1}$.
	\end{enumerate}
\end{theorem}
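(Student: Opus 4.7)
The plan is to prove $(1) \Leftrightarrow (2) \Leftrightarrow (3) \Leftrightarrow (4)$ via the cycle $(1) \Rightarrow (2) \Rightarrow (3) \Rightarrow (4) \Rightarrow (1)$ together with the trivial $(2) \Rightarrow (1)$, reducing to the even-case Theorem \ref{240822-1317} wherever possible and invoking the reduction-to-univariate technique driven by the column relation $Y = q(X)$ to obtain the sharpened atom bound.

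The implications $(3) \Rightarrow (4)$, $(4) \Rightarrow (1)$, and $(2) \Rightarrow (3)$ all reduce to the even case. For $(4) \Rightarrow (1)$, apply Theorem \ref{240822-1317}\eqref{240822-1317-pt3} to the given extension $\beta^{(2k)}$ to obtain a $K$-rm that automatically represents the subsequence $\beta^{(2k-1)}$. For $(3) \Rightarrow (4)$, extend the rg psd $M_{k+\deg q-2}$ by one more degree by propagating the column relation $Y = q(X)$ and its polynomial multiples, which preserves positivity. For $(2) \Rightarrow (3)$, define the extension by $\beta_{i, 2k-i} := \int x^i y^{2k-i}\,d\mu$; since $\supp(\mu) \subseteq K$, the relation $Y = q(X)$ holds in $M_k$, and Theorem \ref{240822-1317}\eqref{240822-1317-pt2} supplies the required psd rg $M_{k+\deg q-2}$.

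The main implication $(1) \Rightarrow (2)$ proceeds by the univariate reduction. Given a $K$-rm $\mu$ for $\beta^{(2k-1)}$, its pushforward $\nu := \pi_*\mu$ under $\pi(x,y) = x$ represents a univariate moment sequence whose initial segment $\gamma_0, \ldots, \gamma_M$ is uniquely determined by $\beta^{(2k-1)}$ via iterating the identities $\beta_{a,b+1} = \sum_s q_s \beta_{a+s,b}$, while the higher-indexed entries carry degrees of freedom inherited from both the under-determined upper levels of the reduction and the unspecified extension parameters $\beta_{0, 2k}$ and $\beta_{1, 2k-1}$. Applying the univariate TMP solution (Algorithm \ref{alg-090323-1157}) and tuning the free parameters to collapse the associated Hankel matrix to rank $k\deg q - \lceil \deg q/2 \rceil$ produces a univariate rm $\nu' = \sum_j \rho_j \delta_{x_j}$ with at most that many atoms; pulling back to $\mu' := \sum_j \rho_j \delta_{(x_j, q(x_j))}$ yields the desired 2D $K$-rm for $\beta^{(2k-1)}$. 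The lower bound $s \geq \Rank M_{k-1}$ follows from the standard factorization $M_{k-1} = \sum_j \rho_j v_j v_j^T$, where $v_j$ is the evaluation vector of all monomials of degree $\leq k-1$ at $(x_j, q(x_j))$.

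The main obstacle will be the parity-dependent bookkeeping showing that the combined freedom---from both the 2D odd-degree structure and the extension parameters---always suffices to save exactly one atom beyond the naive univariate bound $\lfloor (2k-1)\deg q / 2 \rfloor + 1$, achieving the unified estimate $k\deg q - \lceil \deg q/2 \rceil$ for all $\deg q \geq 3$. The boundary case $k = \deg q$, where $M_{k-1}$ admits no column relations descending from the curve and the extension slack is minimal, will require separate treatment.
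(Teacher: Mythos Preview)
Your plan is built on the same univariate reduction as the paper, but you run the cycle in the opposite direction. The paper proves $(1)\Rightarrow(4)\Rightarrow(3)\Rightarrow(2)\Rightarrow(1)$, with $(1)\Rightarrow(4)$ and $(2)\Rightarrow(1)$ trivial, $(4)\Rightarrow(3)$ by \cite[Theorem~3.14]{CF96}, and the substantive step $(3)\Rightarrow(2)$: starting from the hypothesized psd rg extension $M_{k+\ell-2}$ (with $\ell=\deg q$), the paper follows the even-case construction of $\gamma^{(0,2k\ell)}$ verbatim, then simply observes that since $\beta_{i,j}=\sum_{s\le i+j\ell} q_{i,j,s}\gamma_s$ and $i+j\ell\le(2k-1)\ell$ whenever $i+j\le 2k-1$, \emph{every} moment of $\beta^{(2k-1)}$ is determined by the truncation $\gamma^{(0,(2k-1)\ell)}$. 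One then discards $\gamma_{(2k-1)\ell+1},\dots,\gamma_{2k\ell}$ entirely and applies \cite[Theorems~3.1,~3.9]{CF91} to the truncated sequence to get the atomic bound. No parity bookkeeping beyond the even/odd split of $(2k-1)\ell$ is needed, and no separate boundary analysis at $k=\deg q$ is required---the argument is uniform.

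Two points in your sketch should be corrected. First, your justification for $(3)\Rightarrow(4)$---``propagating the column relation $Y=q(X)$ and its polynomial multiples, which preserves positivity''---is not a valid direct argument: propagating column relations determines the new block column, but does not by itself force the extended matrix to remain psd. The honest route is simply to invoke the full equivalence of Theorem~\ref{240822-1317} for the even sequence $\beta^{(2k)}$, which you already cite for the neighbouring implications. Second, your framing of the free parameters in terms of ``the unspecified extension parameters $\beta_{0,2k}$ and $\beta_{1,2k-1}$'' is both incomplete (there are many degree-$2k$ moments) and beside the point: the paper never reasons about which bivariate extension moments are free. It works entirely on the univariate side, truncating $\gamma$ at index $(2k-1)\ell$ and checking via the explicit dependence $\beta_{i,j}\leftrightarrow\gamma_s$ that nothing in $\beta^{(2k-1)}$ is lost. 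This replaces your vague ``tuning the free parameters to collapse the Hankel matrix'' with a single clean truncation, and dissolves the anticipated difficulty you flag in the final paragraph.
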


\begin{remark}
\label{191122-1055}
\begin{enumerate}
\item\textbf{Previous bounds on the size of extensions in \eqref{240822-1317-pt3} of Theorem \ref{240822-1317}:}
In \cite{CF08}, Curto and Fialkow studied polynomials $p\in\RR[x,y]$ for which the existence of the $\cZ(p)$--rm
is equivalent to the psd moment matrix extension of some size. 
In \cite[Section 6]{Fia11} the author considered polynomials of the form 
	$p(x,y)=y-q(x)$,  
	where $q\in \RR[x]$,
			and proved that 
			a sequence of degree $2k$ admits a $\cZ(p)$--rm, 
			if $M_k$ admits a psd extensions $M_{k+r}$,
			where
					$r=(2k+1)\deg q-k$
   		 \cite[Propositions 6.1, 6.3]{Fia11}.
	The proof of this result relies on the truncated Riesz-Haviland theorem \cite[Theorem 1.2]{CF08}
	and a sum-of-squares representations for polynomials, strictly positive on $\cZ(p)$ 
	(\cite[Proposition 6.3]{Fia11} and \cite[Proposition 5.1]{Sto01}).
Part \eqref{240822-1317-pt3} of Theorem \ref{240822-1317} improves Fialkow's result by decreasing the size of the extensions to 
$r=\deg q-1$. We mention that this was known for the case of the curve $y=x^3$ \cite[Corollary 5.3]{Fia11}.

\item
\label{191122-1055-pt2}
\textbf{Known bounds on the number of atoms in \eqref{240822-1317-pt4} of Theorems \ref{240822-1317}, \ref{240822-1317-odd}}:
In \cite{RS18} the authors also studied odd degree sequences $\beta$, which are moments of a positive Borel measure supported on a plane curve $\cZ(p)$, $p\in \RR[x,y]$, and proved
that every such sequence admits a $(k\deg p)$--atomic $\cZ(p)$--rm
\cite[Corollary 7.6]{RS18}. 
In the proof they
use their variant of B\'ezout's theorem on the number of intersection points of two plane algebraic curves \cite[Theorem 7.3]{RS18}.
Part \eqref{240822-1317-pt4} of Theorem \ref{240822-1317} gives an analogue of \cite[Corollary 7.6]{RS18} for even degree sequences on curves $\cZ(y-q(x))$, $\deg q\geq 3$, while
part \eqref{240822-1317-pt4-odd} of Theorem \ref{240822-1317-odd} improves \cite[Corollary 7.6]{RS18} for curves $\cZ(y-q(x))$, $\deg q\geq 3$, by decreasing the upper bound on the number of atoms needed by
$\lceil\frac{\deg{g}}{2}\rceil$.

\item
\label{191122-1056}
\textbf{Theorem \ref{240822-1317} in case $\deg q=2$:}
 If $\deg q=2$ in Theorem \ref{240822-1317}, then $y=q_2x^2+q_1x+q_0\in \RR[x]$ with $q_2\neq 0$ or equivalently $\frac{1}{q_2}y-q_1x-q_0=x^2$. By applying an alt 
		$\phi(x,y)=(x,\frac{1}{q_2}y-q_1x-q_0)$
	to the sequence $\beta$ we get a sequence $\widetilde \beta$ with the moment matrix $M_k(\widetilde \beta)$ satisfying $Y=X^2$. 
	So it is enough to observe the case of a parabola, which was concretely solved in \cite{CF04} by the use of the 
	FET.
	The technique used in the proof of Theorem \ref{240822-1317} can be used to give an alternative proof of the solution 	from \cite{CF04} and also obtain a new solvability condition (see Theorem \ref{090622-1549} below). 
	This condition will be essentially 
	used in the solution of TMP on the cubic reducible curve $y(y-x^2)=0$ in our forthcoming work, 
	similarly as for the TMP on the union of three parallel lines \cite[Theorem 4.2]{Zal22a}, where we needed such version of the solution to the TMP on the union of two parallel lines \cite[Theorem 3.1]{Zal22a}.
	The upper bound on the number of atoms in a minimal rm
	is $2k+1$ 	
	and this is sharp (e.g., if $M_k$ has only column relations coming from $Y=X^2$ by rg, then it is of rank $2k+1$
	and so every rm 
	must have at least $2k+1$ atoms). 
	So the equivalence $\eqref{240822-1317-pt1}\Leftrightarrow\eqref{240822-1317-pt4}$
	of Theorem \ref{240822-1317} does not extend to $\deg q=2$.
	(The moment $\gamma_{k\deg q-1}$ is not independent from $\beta$ for $\deg q=2$ 
	as opposed to $\deg q>2$ and hence in the last step of the proof below decreasing the number of atoms in 	
	the rm 
 from $k\deg q+1$ to $k\deg q$ cannot be done.)
	Also the equivalence $\eqref{240822-1317-pt1}\Leftrightarrow\eqref{240822-1317-pt2}$ of 
	Theorem \ref{240822-1317} is not true for $\deg q=2$, but we need 
	to replace $k+\deg q-2$ by $k+\deg q-1$ in \eqref{240822-1317-pt2},
	 because we do not get the information about $\gamma_{2k\deg q+1}=\gamma_{4k+1}$ and 
	$\gamma_{2k\deg q+2}=\gamma_{4k+2}$
	from $M_{k+\deg q-2}=M_{k}$ for $\deg q=2$ as opposed to $\deg q>2$.
	However, the equivalence $\eqref{240822-1317-pt1}\Leftrightarrow\eqref{240822-1317-pt3}$ still holds for
	$\deg q=2$ with the argument given in Theorem \ref{090622-1549} below.

\item
\label{191122-1056-pt4}  
	\textbf{Theorem \ref{240822-1317} in case $\deg q\leq 1$:}
	If $\deg q\leq 1$ in Theorem \ref{240822-1317}, then $q(x)=ax+by+c$, $a,b,c\in \RR$.
	If $(a,b)\neq (0,1)$, then the following statements are equivalent:
	\begin{enumerate}
		\item
			\label{271122-1836-pt1}
				$\beta$ has a $K$--rm. 
		\item
			\label{271122-1836-pt2}
		 		$\beta$ has a $s$--atomic $K$--rm  
				for some $s$ satisfying 
 					$$\Rank M_k\leq s\leq k+1.$$
		\item
			\label{271122-1836-pt3}
				$M_k$ satisfies $Y=aX+bY+c$, is psd and rg. 
	\end{enumerate}
	The equivalence $\eqref{271122-1836-pt1}\Leftrightarrow\eqref{271122-1836-pt3}$
	follows from \cite[Proposition 3.11]{CF08}, while the 
	equivalence $\eqref{271122-1836-pt1}\Leftrightarrow\eqref{271122-1836-pt2}$
	follows from the solution \cite[Theorem 3.9]{CF91}
	to the $\RR$--TMP, which corresponds to $(M_k)|_{\{1,X,\ldots,X^k\}}$.
	Namely, if the atoms $x_1,\ldots,x_m$ represent $\beta_{i,0}$, $i=0,\ldots,2k$,
	then the atoms $(x_i,y_i)$, where $y_i=\frac{1}{1-b}(ax_i+c)$ will represent $\beta$
	if $b\neq 1$. If $b=1$ and $a\neq 0$, then we change the roles of $x$ and $y$ in the argument above. 
	If $b=1$ and $a=0$, then
	$y=q(x)$ only makes sense if $c=0$, but in this case there are no relations in the moment matrix
 	and for $k>2$ 
	the solution to the TMP is not known (for $k=2$ the solution is known \cite{FN10,CY16}).
	\item
	\label{120323-0031}
	\textbf{Uniqueness and description of all solutions in Theorems \ref{240822-1317} and \ref{240822-1317-odd}:}
	These questions are nontrivial, being equivalent to the descriptions of psd completions of a partially defined Hankel matrix $A_\gamma$,
	where $\gamma\in \RR^{2k\ell+1}$ is a partially defined univariate sequence, i.e.,  $\gamma_t$ is defined by the formula \eqref{250822-1207}
	below. Namely, the original sequence $\beta$ determines only those $\gamma_t$ for which $t\modulo \ell+\lfloor \frac{t}{\ell}\rfloor\leq 2k$ (resp.\ $2k-1$)
	in the even (resp.\ odd) case.
	However, in a very special case $y=x^3$ the structure of the missing entries is simple enough (only $\gamma_{6k-1}$ is missing) to answer these questions.
	The description is not explicitly stated in \cite[Theorem 3.1]{Zal21}, which solves $y=x^3$ using the univariate reduction technique, 
	but one of the main steps of the proof is \cite[Lemma 2.11]{Zal21}, which actually describes
	all psd completions and among them there are one or two minimal ones (in terms of the rank). However, already for $y=x^4$ describing all psd completions concretely
	and among them minimal ones does not seem to be 
	possible
	due to the structure of missing entries of $A_\gamma$; see Example \ref{121122-1804} below.
\item
	\label{120323-0036}
	\textbf{Complexity of checking conditions in \eqref{240822-1317-pt3} of Theorems \ref{240822-1317} and \ref{240822-1317-odd}:}
	Checking if $Y=q(X)$ is a column relation only requires checking whether the corresponding vector is in the kernel fo $M_k$,	
	while the existence of a psd extension $M_{k+\deg q-1}$ of $M_k$ is a feasibility question of a semidefinite programe (SDP) in the variables $\beta_{i,j}$ 
	with $2k<i+j\leq 2(k+\deg q-1)$, $i,j\in \ZZ_+$, i.e., 
	a SDP with matrices of size $\binom{k+\deg q+1}{2}$ in 
		$\binom{2(k+\deg q)}{2}-\binom{2(k+1)}{2}=(\deg q-1)(4k+2\deg q+1)$ 
	variables.
	The complexity of the SDP feasibility question is still unknown (see \cite{Ram97,KS12}), but for a fixed number of variables or size of matrices it 
	has polynomial time complexity \cite{PK97}.
	By Theorem \ref{121122-1341} below, the feasibility question in \eqref{240822-1317-pt3} of Theorems \ref{240822-1317} and  \ref{240822-1317-odd} is equivalent to the 
	feasibility question of a smaller SDP, i.e., the matrices are of size $k\deg q+2$ and the number of variables is $\frac{1}{2}(\deg q-2)(\deg q-1)+2$.
\end{enumerate}
\end{remark}

\begin{remark}[Basic idea of the proof of the implication $\eqref{240822-1317-pt2}\Rightarrow\eqref{240822-1317-pt4}$ of Theorem \ref{240822-1317}]
\label{110323-1045}
The main steps are the following:
\begin{enumerate}
	\item
		\label{260223-1844}
	 		Due to the column relation $Y=q(X)$ satisfied by $M_k$ and $M_k$ being rg, 
			the column space of $M_{k+\ell-2}$ is spanned by the set $\cB$ of columns, 
			indexed by monomials
				$Y^{i}X^j$, 
					where $i=0,\ldots,k$, $j=0,\ldots,\deg q-1$, and $i+j\leq k+\ell-2$.
	\item 
		\label{260223-1845}
			Writing $q(x)=\sum_{i=0}^\ell q_i x^i$, the point \eqref{260223-1844} implies, 
			on the level of the sequence $\beta_{i,j}$, 
			the following linear relations:
			\begin{equation}
				\label{260223-1853}
					\beta_{i,j}=q_\ell\beta_{i+\ell,j-1}+q_{\ell-1}\beta_{i+\ell-1,j-1}+\ldots+q_0\beta_{i,j-1},
			\end{equation}
			where $i\in \ZZ_+$, $j\in \NN$ and $i+j\leq 2(k+\ell-2)$.
	\item 
		\label{260223-1857}
			Using relations \eqref{260223-1853} one can show that the sequence 
			\begin{equation*}
				\label{260223-1922}	
					\beta_{i,j}, \qquad i,j\in \ZZ_+, \; i+j\leq 2(k+\ell-2),
			\end{equation*}
			can be uniquely parametrized by 
			\begin{equation*}
				\label{260223-1924}
					\gamma_i:=\beta_{i,0}, \qquad i=0,\ldots,2k\ell+2.
			\end{equation*}
			(See \eqref{250822-1207} and Claim 4 in the proof of Theorem \ref{240822-1317} below.)
	\item 
		\label{260223-1921}
			The next step is to notice that the original sequence $\beta_{i,j}$,
			$i,j\in \ZZ_+, i+j\leq 2k$ 
			has a $K$--rm if and only if the univariate sequence
			$\gamma_0,\gamma_1,\ldots,\gamma_{2k\ell}$
			has a $\RR$--rm.
			(See Claim 5 in the proof of Theorem \ref{240822-1317} below.)
	\item 
		\label{260223-1926}
			Now one can use the solution to the $\RR$--TMP due to Curto and Fialkow \cite[Theorem 3.9]{CF91}
			and 
			derive the solution to the $K$--TMP.
			Namely, the sequence 
				$\gamma^{(0,2k\ell)}=(\gamma_0,\gamma_1,\ldots,\gamma_{2k\ell})\in \RR^{2k\ell+1}$
			has a $(\Rank \gamma^{(0,2k\ell)})$--atomic $\RR$--rm iff
			a Hankel matrix 
			$A_{\gamma^{(2k\ell+2)}}$
			is psd for some extension
			$\gamma^{(2k\ell+2)}=(\gamma,\gamma_{2k\ell+1},\gamma_{2k\ell+2})\in \RR^{2k\ell+3}$
			of $\gamma^{(0,2k\ell)}$.
			Observing that the restriction $(M_{k+\ell-2})|_{\cB}$ is equal to 
				$PA_{\gamma^{(2k\ell+2)}}P^T$
			for a certain matrix $P$ 
			(See Claim 6 in the proof of Theorem \ref{240822-1317} below.),
			 translates the condition of $A_{\gamma^{(2k\ell+2)}}$ being psd,
			to $(M_{k+\ell-2})|_{\cB}$ (and hence $M_{k+\ell-2}$) being psd.
	\item
		\label{260223-1948}
			The last step is to decrease the number of atoms in the solution to the 
			$\RR$--TMP for $\gamma^{(0,2k\ell)}$
			by 1 in case $A_{\gamma^{(0,2k\ell)}}$ has full rank, i.e., $\Rank A_{\gamma^{(0,2k\ell)}}=k\ell+1$. 
			This can be achieved in the following three steps:
			\begin{enumerate}
				\item 
					\label{230303-0025}
					Applying an appropriate alt $\phi$ to $\beta,$ one can assume that the coefficient $q_{\ell-1}$ of $q(x)$ is equal to 0.
					(See Claim 1 in the proof of Theorem \ref{240822-1317} below.) 
				\item
					Due to \eqref{230303-0025} none of the moments of the original sequence $\beta_{i,j}$, $i,j\in \ZZ_+$, $i+j\leq 2k$, 
					depends on $\gamma_{2k\ell-1}$.
				\item One can replace $\gamma_{2k\ell-1}$ with $\widetilde \gamma_{2k\ell-1}$ to obtain a sequence
					$\widetilde\gamma^{(0,2k\ell)}$ with a $(k\ell)$--atomic $\RR$--rm.
			\end{enumerate}
\end{enumerate}
\end{remark}

\begin{proof}[Proof of Theorem \ref{240822-1317}] 
	Before starting a proof we do an alt $\phi$ which will be used in the proof of the implication
	$\eqref{240822-1317-pt2}\Rightarrow\eqref{240822-1317-pt4}$ to justify in an easier way
	that the upper bound in 
	\eqref{240822-1317-pt4} is $k\deg q$ instead of $k\deg q+1$.
	We write $\ell:=\deg q$ and let $q(x)=\sum_{i=0}^{\ell}q_i x^i$,
	where $q_\ell\neq 0$ and each $q_{i}\in \RR$.\\

	\noindent\textbf{Claim 1.}
		We may assume that $q_{\ell-1}=0$.\\
	
	\noindent \textit{Proof of Claim 1.}
	Defining $\phi:\RR^2\to\RR^2$ by
		$\displaystyle \phi(x,y)=\Big(x+\frac{q_{\ell-1}}{\ell q_{\ell}},y\Big)=:(\tilde x, y)$, note that the
	relation $y=q(x)$ becomes
		$$
			y
			=q\Big(\tilde x-\frac{q_{\ell-1}}{\ell q_{\ell}}\Big)
			=\sum_{i=0}^{\ell}q_i \Big(\tilde x-\frac{q_{\ell-1}}{\ell q_{\ell}}\Big)^i
			=q_\ell \tilde x^\ell + \Big(\underbrace{-q_\ell \cdot \ell \frac{q_{\ell-1}}{\ell q_{\ell}} +q_{\ell-1}}_{=0} \Big)\tilde x ^{\ell-1}
			+\sum_{i=0}^{\ell-2} \tilde{q}_{i}\tilde x^i,
		$$
	for some $\tilde{q}_{0},\ldots, \tilde{q}_{\ell-2}\in \RR$.
	Since the solution of the $K$--TMP is invariant under applying $\phi$ 
	by Proposition \ref{251021-2254}, the conclusion of Claim 1 follows.
	\hfill$\blacksquare$\\

	Now we start the proof of the theorem.
	The implications 
		$\eqref{240822-1317-pt1}\Rightarrow\eqref{240822-1317-pt3}$ 
	and
		$\eqref{240822-1317-pt4}\Rightarrow\eqref{240822-1317-pt1}$ 	
	are trivial.
	The implication $\eqref{240822-1317-pt3}\Rightarrow\eqref{240822-1317-pt2}$ is \cite[Theorem 3.14]{CF96}.
	It remains to prove the implication $\eqref{240822-1317-pt2}\Rightarrow\eqref{240822-1317-pt4}$.
	Assume that $M_k$ admits a psd, rg extension $M_{k+\ell-2}$.
Let	
	\begin{equation}
		\label{111122-2040}
			\cB=\left\{1,x,\ldots,x^{\ell-1},y,yx,\ldots,yx^{\ell-1},\ldots,y^{k-1},\ldots,y^{k-1}x^{\ell-1},y^{k},y^kx\right\}
	\end{equation}
	be a set of monomials and $V$ a vector subspace in $\RR[x,y]_{k+\ell-2}$ generated by the set $\cB$.
	Since $M_{k+\ell-2}$ satisfies $X^iY^j=X^iq(X)^j$ for every $i,j\in \ZZ_+$
	such that $i+j\ell\leq k+\ell-2$, it follows that the columns from $\cB$	
	span $\cC(M_{k+\ell-2})$.
	Let
		$p(x,y)=\sum_{i,j}p_{ij}x^iy^j\in V$
	be a polynomial and 
		$\widehat p$
	a vector of its coefficients ordered in the basis $\cB$.
	Before we define a univariate polynomial $g_p(x)$ corresponding to $p(x,y)$, 
	we prepare some computations.
	We have that
	\begin{align}\label{240822-1441}
	\begin{split}
		x^i(q(x))^j
		&=
		x^i\Big(\sum_{0\leq i_1,\ldots,i_j\leq \ell} q_{i_1}q_{i_2}\cdots q_{i_j} x^{i_1+\ldots+i_j}\Big)\\
		&=
		\sum_{p=0}^{j\ell}
		\Big(\sum_{\substack{0\leq i_1,\ldots,i_j\leq \ell,\\ i_1+\ldots+i_j=p}}
				q_{i_1}q_{i_2}\cdots q_{i_j}\Big)
		x^{i+p}\\
		&=
		\sum_{s=i}^{i+j\ell} q_{i,j,s}x^s,
	\end{split}
	\end{align}	
	for all $i,j\in \ZZ_+$,
	where 
	\begin{equation}
		\label{111122-1939}
		\displaystyle q_{i,j,s}=
			\left\{
			\begin{array}{rr}
				\displaystyle\sum_{\substack{0\leq i_1,\ldots,i_j\leq \ell,\\ i_1+\ldots+i_j=s-i}}
				q_{i_1}q_{i_2}\ldots q_{i_j},&		\text{if }i\leq s\leq i+j\ell,\\
				0,&		\text{otherwise}.
			\end{array}
			\right.
	\end{equation}

	\noindent Later on we will need the following observation about the numbers $q_{i,j,s}$.\\

	\noindent\textbf{Claim 2.}
	Let $i_1,i_2,j_1,j_2,s\in \ZZ_+$. Then
	\begin{equation}
		\label{091122-2011}
			q_{i_1+i_2,j_1+j_2,s}=\sum_{t=i_1}^{s}q_{i_1,j_1,t}q_{i_2,j_2,s-t}.
	\end{equation}
	
	\noindent \textit{Proof of Claim 2.}
	We write $m_1:=i_1+i_2$ and $m_2:=i_1+i_2+(j_1+j_2)\ell$.
	We separate two cases: $s\in \{m_1,m_1+1,\ldots, m_2\}$ and $s\notin \{m_1,m_1+1,\ldots,m_2\}$.\\

	\noindent Case 1: $s\in  \{m_1,m_1+1,\ldots, m_2\}$. 
	We have that	
	\begin{align*}
		q_{i_1+i_2,j_1+j_2,s}
			&=^1\sum_{\substack{0\leq k_1,\ldots,k_{j_1+j_2}\leq \ell,\\ k_1+\ldots+k_{j_1+j_2}=s-i_1-i_2}}
				q_{k_1}q_{k_2}\cdots q_{k_{j_1}}q_{k_{j_1+1}}\cdots q_{k_{j_1+j_2}}\\
			&=^2\sum_{t=i_1}^{s}
				\sum_{\substack{0\leq k_1,\ldots,k_{j_1}\leq \ell,\\ k_1+\ldots+k_{j_1}=t-i_1}}
				\sum_{\substack{0\leq k_{j_1+1},\ldots,k_{j_2}\leq \ell,\\ k_{j_1+1}+\ldots+k_{j_2}=s-t-i_2}}
				q_{k_1}q_{k_2}\cdots q_{k_{j_1}}q_{k_{j_1+1}}\cdots q_{k_{j_1+j_2}}\\
			&=^3\sum_{t=i_1}^{s}
				\Big(\sum_{\substack{0\leq k_1,\ldots,k_{j_1}\leq \ell,\\ k_1+\ldots+k_{j_1}=t-i_1}}
					q_{k_1}q_{k_2}\cdots q_{k_{j_1}}\Big)
				\Big(\sum_{\substack{0\leq k_{j_1+1},\ldots,k_{j_2}\leq \ell,\\ k_{j_1+1}+\ldots+k_{j_2}=s-t-i_2}}
					q_{k_{j_1+1}}\cdots q_{k_{j_1+j_2}}\Big)\\
			&=^4\sum_{t=i_1}^{s} q_{i_1,j_1,t}q_{i_2,j_2,s-t},\\
	\end{align*}
	where 
		the first equality follows by definition \eqref{111122-1939} of $q_{i_1+i_2,j_1+j_2,s}$,
		in the second we decomposed the sum into three sums,
		in the third we used independence of the inner two sums,
	while
		the last equality follows by definitions \eqref{111122-1939} of $q_{i_1,j_1,t}$ and $q_{i_2,j_2,s-t}$.\\

	\noindent Case 2: $s\notin \{m_1,m_1+1,\ldots, m_2\}$. For $s>m_2$ we have
	$q_{i_1+i_2,j_1+j_2,s}=0$ and 
		$$
		\sum_{t=i_1}^{s}q_{i_1,j_1,t}q_{i_2,j_2,s-t}
		=
		\sum_{t=i_1}^{i_1+j_1\ell}q_{i_1,j_1,t}\underbrace{q_{i_2,j_2,s-t}}_{\substack{=0, \text{ since}\\[0.2em]s-t>i_2+j_2\ell}}
		+
		\sum_{t=i_1+j_1\ell+1}^{s}\underbrace{q_{i_1,j_1,t}}_{\substack{=0, \text{ since}\\[0.2em] t>i_1+j_1\ell}}q_{i_2,j_2,s-t}
		=0,
		$$	
	which implies that \eqref{111122-1939} holds. 
	Similarly, for $s<m_1$ we again have $q_{i_1+i_2,j_1+j_2,s}=0$ and 
		$
		\sum_{t=i_1}^{s}q_{i_1,j_1,t}q_{i_2,j_2,s-t}
		=0,
		$
	since $q_{i_2,j_2,s-t}=0$ for every $t$ due to $s-t<i_2$.
	Also in this case \eqref{111122-1939} holds. 
	\hfill$\blacksquare$\\

	Now we define a univariate polynomial $g_p(x)$ corresponding to $p(x,y)$ 
	by
		$$g_p(x):=p(x,q(x))
			=
			\sum_{i,j}p_{ij} \sum_{s=i}^{i+j\ell} q_{i,j,s}x^s
			=:
			\sum_{s=0}^{k\ell+1} g_{p,s} x^s
			\in 
		\RR[x]_{k\ell+1},$$
	where we used \eqref{240822-1441} in the second equality.
%
	Let $\widehat{g_p}$ be its vector of coefficients in the basis 
	\begin{equation}
		\label{171122-2049}
			\cB_1=\{1,x,\ldots,x^{k\ell+1}\}.
	\end{equation}
	The following claim expresses $\widehat{g_p}$ by $\widehat p$.\\

	\noindent\textbf{Claim 3.}
		It holds that 
		\begin{equation}
			\label{111122-2024}
				\widehat{g_p}=P^T\widehat p,
		\end{equation}
		where
$$
		P=
		\begin{pmatrix}
		I_{\ell} & 0       & \cdots & \cdots & 0 & 0 \\
		P[1,0] & P[1,1] & 0 &   & 0 & \vdots\\
		\vdots & \ddots &  \ddots & \ddots & \vdots & \vdots\\
		\vdots & &  \ddots & \ddots & 0 & \vdots\\
		P[k-1,0] & P[k-1,1] & \cdots & \cdots &P[k-1,k-1] & 0\\
		Q[0] & Q[1] & \cdots & \cdots & Q[k-1] & Q[k]
		\end{pmatrix}\in \RR^{(k\ell+2)\times (k\ell+2)}
		$$
	and
	\begin{align*}
		P[c,d]
		&=
		\begin{pmatrix}
		q_{0,c,d\ell} & q_{0,c,d\ell+1}  & \cdots & q_{0,c,d\ell+\ell-1} \\
		q_{1,c,d\ell} & q_{1,c,d\ell+1}  & \cdots & q_{1,c,d\ell+\ell-1} \\
		\vdots & \cdots & \cdots & \vdots \\
		q_{\ell-1,c,d\ell} & q_{\ell-1,c,d\ell+1}  & \cdots & q_{\ell-1,c,d\ell+\ell-1} \\
		\end{pmatrix}
		\in \RR^{\ell\times \ell}\quad \text{for each }c,d,\\[0.5em]
		Q[d]
		&=
		\begin{pmatrix}
		q_{0,k,d\ell} & q_{0,k,d\ell+1}  & \cdots & q_{0,k,d\ell+\ell-1} \\
		q_{1,k,d\ell} & q_{1,k,d\ell+1}  & \cdots & q_{1,k,d\ell+\ell-1}
		\end{pmatrix}
		\in \RR^{2\times \ell}\quad \text{for }d=0,\ldots,k-1,\\[0.5em]
		Q[k]
		&=
		\begin{pmatrix}
		(q_{\ell})^k & 0\\
		q_{1,k,k\ell} & (q_{\ell})^k
		\end{pmatrix}
		\in \RR^{2\times 2}.
	\end{align*}	
	
	\noindent \textit{Proof of Claim 3.} 
	We write 
		$\vec{v}_{x,y}$ for the vector of monomials $x^iy^j$ from the basis $\cB$ (see \eqref{111122-2040})
	and
		 $\vec{v}_x$ for the vector of monomials from the basis $\cB_1$ (see \eqref{171122-2049}).
	We have that 
		$p(x,y)=\big(\vec{v}_{x,y}\big)^T\widehat p$
	and 
		$g_p(x)=\big(\vec{v}_x\big)^T \widehat g_p$.
	By \eqref{240822-1441} it follows that $\vec{v}_{x,y}=P\vec{v}_{x}$. 
	Hence, the definition of $g_p$ implies that
		$g_p(x)=\big(P\vec{v}_x\big)^T\widehat p=\big(\vec{v}_x\big)^TP^T\widehat p.$
	Thus, $\widehat g_p=P^T\widehat p$, which proves Claim 3.	\hfill$\blacksquare$\\

	Note that 
	\begin{equation}
		\label{111122-2306}
			q_{i,j,i+j\ell}=(q_{\ell})^j\neq 0
	\end{equation}
	and hence we can express $x^{i+j\ell}$ from \eqref{240822-1441} by the formula
	\begin{align}\label{240822-1500}
	\begin{split}
		x^{i+j\ell}=\frac{1}{(q_\ell)^j}\Big(x^i(q(x))^j-\sum_{s=0}^{i+j\ell-1}q_{i,j,s}x^{s}\Big).
	\end{split}
	\end{align}
	We define two univariate sequences
	$$
		\gamma:=\gamma^{(0,2k\ell)}=(\gamma_0,\gamma_1,\ldots,\gamma_{2k\ell})\in \RR^{2k\ell+1},\quad
		\widetilde\gamma:=\gamma^{(2k\ell+2)}=(\gamma,\gamma_{2k\ell+1},\gamma_{2k\ell+2})\in \RR^{2k+3},
	$$
	recursively for $t=0,1,\ldots,2k\ell+2$ by the formula
	 \begin{equation}
		\label{250822-1207}
		\gamma_{t}
				=\frac{1}{(q_\ell)^{\lfloor \frac{t}{\ell}\rfloor}}\Big(\beta_{t\modulo \ell,\lfloor \frac{t}{\ell}\rfloor}-
				\sum_{s=0}^{t-1}q_{t\modulo \ell,\lfloor \frac{t}{\ell}\rfloor,s}\cdot \gamma_{s}\Big).
	\end{equation}
	Note that $t\modulo \ell+\lfloor\frac{t}{\ell}\rfloor\leq \ell-1+2k$ (here we used that $\ell\geq 3$ and thus $\lfloor\frac{t}{\ell}\rfloor\leq 2k$)
	and so	$\beta_{t\modulo \ell,\lfloor \frac{t}{\ell}\rfloor}$ is well-defined being an element of the matrix $M_{k+\ell-2}$
	(since $2(k+\ell-2)\geq \ell-1+2k$ for $\ell\geq 3$).\\

Note that we defined $\gamma_t$ by \eqref{250822-1207} only using $\beta_{i,j}$ with $i<\ell$.
The following claim proves we could define $\gamma_t$ using any $i,j$ with $t=i+j\ell$.\\

\noindent\textbf{Claim 4.} 
		Let $t\in \{0,1,\ldots,2k\ell+2\}$ and $t=i+j\ell$ with $i,j\in\ZZ_+$, $i+j\leq 2(k+\ell-2)$.
		Then 
		\begin{equation}
			\label{111222-2258}
			\gamma_{t}
				=\frac{1}{(q_\ell)^{j}}\Big(\beta_{i,j}-\sum_{s=0}^{t-1}q_{i,j,s}\cdot \gamma_{s}\Big).
		\end{equation}

\noindent \textit{Proof of Claim 4.} 
	If $i<\ell$, then \eqref{111222-2258} follows from definition \eqref{250822-1207} of $\gamma_t$.
	Let $i_0\geq \ell$. Assume \eqref{111222-2258} is true for every $i=0,1,\ldots,i_0-1$
	and $j\in\NN_0$ such that $i+j\leq 2(k+\ell-2)$. 
	We will prove it is true for $i_0$ and every $j\in\NN_0$ such that $i_0+j\leq 2(k+\ell-2)$.
	We have that:
	\begin{align}
	\label{111222-2321}
	\begin{split}
		\beta_{i_0,j}
		&=\frac{1}{q_\ell}\Big(\beta_{i_0-\ell,j+1}-\sum_{s=0}^{\ell-1}q_s\beta_{i_0-\ell+s,j}\Big)\\
		&=
		\frac{1}{q_\ell}\Big(\sum_{s=i_0-\ell}^{i_0+j\ell}q_{i_0-\ell,j+1,s}\gamma_s
		-\sum_{s=0}^{\ell-1}q_s\Big(\sum_{u=i_0-\ell+s}^{i_0+s+(j-1)\ell}q_{i_0-\ell+s,j,u}\gamma_u\Big)\Big)
	\end{split}
	\end{align}
	where in the first equality we used that $\beta^{(2(k+\ell-2))}=(\beta_{i,j})_{i,j\in \ZZ_+,i+j\leq 2(k+\ell-2)}$ is rg and $Y=q(X)$ is a column relation, 
	and in the second we used the induction hypothesis together with the fact that 
	$q_{i,j,s}=0$ if $s<i$ and $q_{i_0-\ell,j+1,i_0+j\ell}=(q_{\ell})^{j+1}$.
	Note that
	\begin{align}
	\label{111222-2322}
	\begin{split}
	&\sum_{s=0}^{\ell-1}q_s
		\Big(\sum_{u=i_0-\ell+s}^{i_0+s+(j-1)\ell}q_{i_0-\ell+s,j,u}\gamma_u\Big)=
	\sum_{s=0}^{\ell-1}q_s
		\Big(\sum_{u=i_0-\ell+s}^{i_0+s+(j-1)\ell}q_{i_0-\ell,j,u-s}\gamma_u\Big)\\
	&=\sum_{s=0}^{\ell-1}
		\Big(\sum_{u=i_0-\ell+s}^{i_0+s+(j-1)\ell}q_sq_{i_0-\ell,j,u-s}\gamma_u\Big)\\
	&=\sum_{u=i_0-\ell}^{i_0+j\ell-1}q_{i_0-\ell,j+1,u}\gamma_u-	
		\sum_{u=i_0-\ell}^{i_0+j\ell-1}q_\ell q_{i_0-\ell,j,u-\ell}\gamma_u
	\end{split}
	\end{align}
	where in the first equality we used that $q_{i_0-\ell+s,j,u}=q_{i_0-\ell,j,u-s}$ by definition \eqref{111122-1939},
	in the second we moved $q_s$ inside the inner sum,
	in the third we used that for a fixed $u$, as $s$ runs from 0 to $\ell-1$, the coefficients at $\gamma_u$ run over all terms
	$q_{i_1}\cdots q_{i_{j+1}}$ such that $i_1+\ldots+i_{j+1}=u-i_0+\ell$ except those where $q_{i_1}=q_\ell$.
	But if $q_{i_1}=q_{\ell}$, all terms $q_{i+2}\cdots q_{i_{j+1}}$ such that $i_2+\ldots+i_{j+1}=u-i_0$ sum up to $q_{i_0-\ell,j,u-\ell}$.
	Using \eqref{111222-2322} in \eqref{111222-2321} we get
	\begin{align}
	\label{111222-2338}
	\begin{split}
		\beta_{i_0,j}
		&=\frac{1}{q_\ell}\Big(q_{i_0-\ell,j+1,i_0+j\ell}\gamma_{i_0+j\ell}+
			\sum_{u=i_0-\ell}^{i_0+j\ell-1}q_\ell q_{i_0-\ell,j,u}\gamma_u\Big)\\
		&=\frac{1}{q_\ell}\Big((q_\ell)^{j+1}\gamma_{i_0+j\ell}+
			\sum_{u=i_0-\ell}^{i_0+j\ell-1}q_\ell q_{i_0-\ell,j,t}\gamma_u\Big)\\
		&=(q_\ell)^{j}\gamma_{i_0+j\ell}+
			\sum_{u=i_0-\ell}^{i_0+j\ell-1}q_{i_0-\ell,j,u}\gamma_u.
	\end{split}
	\end{align}
	where in the second equality we used that 
	$q_{i_0-\ell,j+1,i_0+j\ell}=(q_\ell)^{j+1}$	
	and in the last we put $\frac{1}{q_\ell}$ inside the bracket.
	But \eqref{111222-2338} is \eqref{111222-2258} for $t=i_0+j\ell$.
	\hfill $\blacksquare$\\
	
	By the following claim solving the $K$--TMP for $\beta$ is equivalent to solving the $\RR$--TMP for $\gamma$.\\

	\noindent\textbf{Claim 5.} 
		Let $u\in \NN$.
		A sequence 
			$\gamma$ admits a $u$--atomic $\RR$--rm
		if and only if 
			$\beta$ admits a $u$--atomic $K$--rm.\\

\noindent \textit{Proof of Claim 5.} 
First we prove the implication $(\Rightarrow)$.
Let 	
	$x_1,\ldots, x_u,$ 
be the atoms in the $\RR$--rm for $\gamma$ with the corresponding densities 
	$\rho_1,\ldots,\rho_u$.
We will prove that the atoms $(x_1,q(x_1)),\ldots, (x_u,q(x_u))$ with densities $\rho_1,\ldots,\rho_p$ are the $K$--rm for $\beta$.
We use induction on the index $i$ in $\beta_{i,j}$, where $i+j\leq 2k$. 
For $i<\ell$ and any $j$ such that $i+j\leq 2k$
we have that
	\begin{align*}
	  \beta_{i,j}
		&=^1
		(q_\ell)^j \gamma_{i+j\ell}+\sum_{s=0}^{i+j\ell-1}q_{i,j,s}\gamma_s\\
		&=^2
		(q_\ell)^j \Big(\sum_{p=0}^u\rho_p (x_p)^{i+j\ell}\Big)+
		\sum_{s=0}^{i+j\ell-1}q_{i,j,s}\Big(\sum_{p=0}^u\rho_p (x_p)^{s}\Big)\\
		&=^3\sum_{p=0}^u\rho_p (q_\ell)^j (x_p)^{i+j\ell}+
		\sum_{p=0}^u\Big(\rho_p  \sum_{s=0}^{i+j\ell-1}q_{i,j,s}(x_p)^{s}\Big)\\
		&=^4\sum_{p=0}^u\Big(\rho_p \Big((q_\ell)^j (x_p)^{i+j\ell}+\sum_{s=0}^{i+j\ell-1}q_{i,j,s}(x_p)^{s}\Big)\Big)\\
		&=^5\sum_{p=0}^u\Big(\rho_p (x_p)^i (q(x_p))^j\Big),
	\end{align*}
where we used \eqref{250822-1207} with $t=i+j\ell$ in the first equality noticing that
	$$i+j\ell=i+j+j(\ell-1)\leq 2k+2k(\ell-1)\leq 2k\ell,$$ 
implying well-definedness of $\gamma_s$ by $s$ being bounded above by $2k\ell$, 
the definitions of $\rho_p$, $x_p$ in the second equality,
we interchanged the order of summation in the third and fourth equalities
and in the last we used \eqref{240822-1500} for $x=x_p$.
So the atoms $(x_1,q(x_1)),\ldots, (x_u,q(x_u))$ with densities $\rho_1,\ldots,\rho_p$ indeed represent $\beta_{i,j}$
for $i<\ell$ and any $j$ such that $i+j\leq 2k$.
We now assume that this holds for all $i=0,\ldots,m$ and $j$ such that $i+j\leq 2k$, where $m\geq \ell-1$ 
and prove it for $i=m+1$ and any $j\leq 2k-i$.
We have that
	\begin{align*}
		\beta_{m+1,j}
		&=^1
		\frac{1}{q_\ell}\big(\beta_{m+1-\ell,j+1}-\sum_{s=0}^{\ell-1}q_s \beta_{m+1-\ell+s,j} \big)\\
		&=^2
		\frac{1}{q_\ell}\Big(\Big(\sum_{p=0}^u \rho_p(x_{p})^{m+1-\ell} (q(x_p))^{j+1}\Big)-
			\sum_{s=0}^{\ell-1}q_s \Big(\sum_{p=0}^u \rho_p(x_{p})^{m+1-\ell+s} (q(x_p))^{j}\Big) \big)\\
		&=^3	
		\frac{1}{q_\ell}\sum_{p=0}^u\Big(\rho_p\Big( (x_{p})^{m+1-\ell} (q(x_p))^{j+1}-\sum_{s=0}^{\ell-1}q_s (x_{p})^{m+1-\ell+s} (q(x_p))^{j}\Big) \Big)\\
		&=^4	
		\frac{1}{q_\ell}\sum_{p=0}^u\Big(\rho_p(x_{p})^{m+1-\ell} (q(x_p))^{j}\Big( q(x_p)-\sum_{s=0}^{\ell-1}q_s (x_{p})^{s}\Big) \Big)\\
		&=^5	
		\sum_{p=0}^u\Big(\rho_p (x_{p})^{m+1} (q(x_p))^{j}\Big),
	\end{align*}
	where we used that $\beta$ is rg in the first equality, the induction hypothesis in the second,
	in the third we interchanged the order of summation,
	factored out $(x_{p})^{m+1-\ell} (q(x_p))^{j}$ in the fourth 
	and 
	in the last we used that 
		$q(x)-\sum_{s=0}^{\ell-1}q_s x^{s}=q_\ell x^\ell$ 
	by definition of $q$.
	This proves the implication $(\Rightarrow)$.\\

	It remains to prove the implication $(\Leftarrow)$.
	Let 	
		$(x_1,q(x_1)),\ldots, (x_u,q(x_u))$	
	be the atoms in the $K$--rm for $\beta$ with the corresponding densities 
		$\rho_1,\ldots,\rho_u$.
	We will prove that the atoms 
		$(x_1,\ldots,x_u)$ 
	with densities 
		$\rho_1,\ldots,\rho_p$ 
	are the $\RR$--rm for $\gamma$.
	We use induction on the index $t$ in $\gamma_t$. For $t=0$  the claim is trivial,
	since $\gamma_0=\beta_{0,0}=\sum_{p=0}^u \rho_u.$
We now assume that the claim holds for all $t-1$ with $0\leq t-1\leq 2k\ell-1$ 
and prove it for $t$.
We have that
	\begin{align*}
	  \gamma_{t}
		&=^1\frac{1}{(q_\ell)^{\lfloor \frac{t}{\ell}\rfloor}}\Big(\beta_{t\modulo \ell,\lfloor \frac{t}{\ell}\rfloor}-
				\sum_{s=0}^{t-1}q_{t\modulo \ell,\lfloor \frac{t}{\ell}\rfloor,s}\cdot \gamma_{s}\Big)\\
		&=^2\frac{1}{(q_\ell)^{\lfloor \frac{t}{\ell}\rfloor}}\Big(
				\sum_{p=0}^u\rho_p (x_p)^{t\modulo \ell} (q(x_p))^{\lfloor \frac{t}{\ell}\rfloor}	
				-
				\sum_{s=0}^{t-1}q_{t\modulo \ell,\lfloor \frac{t}{\ell}\rfloor,s}\cdot 
				\Big(\sum_{p=0}^u \rho_p (x_p)^{s}\Big)\Big)\\
		&=^3\frac{1}{(q_\ell)^{\lfloor \frac{t}{\ell}\rfloor}}
				\sum_{p=0}^u\Big(
				\rho_p \Big((x_p)^{t\modulo \ell} (q(x_p))^{\lfloor \frac{t}{\ell}\rfloor}	
				-
				\sum_{s=0}^{t-1}q_{t\modulo \ell,\lfloor \frac{t}{\ell}\rfloor,s}\cdot 
				(x_p)^{s}\Big)\Big)\\
		&=^4\frac{1}{(q_\ell)^{\lfloor \frac{t}{\ell}\rfloor}}
				\sum_{p=0}^u\Big(
				\rho_pq_{t\modulo \ell, \lfloor \frac{t}{\ell}\rfloor, t\modulo \ell+\lfloor \frac{t}{\ell}\rfloor\ell} 
				\cdot (x_p)^{t\modulo \ell+\lfloor \frac{t}{\ell}\rfloor\ell}\Big)\\
		&=^5\frac{1}{(q_\ell)^{\lfloor \frac{t}{\ell}\rfloor}}
				\sum_{p=0}^u\Big(
				\rho_p (q_\ell)^{\lfloor \frac{t}{\ell}\rfloor} (x_p)^{t}\Big)\\
		&=^6\sum_{p=0}^u \rho_p (x_p)^{t},
	\end{align*}
where we used the definition \eqref{250822-1207} of $\gamma_t$ in the first equality,
the definitions of $\rho_p$, $x_p$ and the induction hypothesis in the second equality,
we interchanged the order of summation in the third equality,
used \eqref{240822-1441} for $(i,j)=(t\modulo \ell, \lfloor \frac{t}{\ell}\rfloor)$ in the fourth equality
and
the observation \eqref{111122-2306} for $(i,j)=(t\modulo \ell, \lfloor \frac{t}{\ell}\rfloor)$ in
the fifth equality.
This proves the implication $(\Leftarrow)$.
	\hfill $\blacksquare$\\

	Let	$(M_{k+\ell-2})|_{\cB}$ be the restriction of $M_{k+\ell-2}$ to the rows and columns indexed by monomials (capitalized)
	from $\cB$. The following claim gives an explicit connection between $(M_{k+\ell-2})|_{\cB}$ and the Hankel matrix 	
	$A_{\widetilde\gamma}$ of the sequence $\widetilde\gamma$.\\
	
	\noindent\textbf{Claim 6.} 
	We have that
	\begin{equation}
		\label{250822-1104}
			(M_{k+\ell-2})|_{\cB}=PA_{\widetilde\gamma}P^T.
	\end{equation}

	\noindent \textit{Proof of Claim 6.}
	Let 
		$p(x,y)=\sum_{i,j}p_{ij}x^iy^j\in V$ 
	and
		$r(x,y)=\sum_{i,j}r_{ij}x^iy^j\in V$ 
	be polynomials from the vector subspace $V$ 
	and 
		$\widehat p$, $\widehat r$ 
	vectors of their coefficients ordered in the basis $\cB$ (see \eqref{111122-2040}).
	Let $\widetilde \beta:=\beta^{(2(k+\ell-2))}$.
	Then we have
	\begin{align*}
		(\widehat r)^T\left((M_{k+\ell-2})|_{\cB}\right)\widehat p
		&=^1 L_{\widetilde\beta}(pr)
		=L_{\widetilde \beta}\big(\sum_{i_1,i_2,j_1,j_2} p_{i_1j_1}r_{i_2j_2}x^{i_1+i_2}y^{j_1+j_2}\big)\\
		&=^2 \sum_{i_1,i_2,j_1,j_2} p_{i_1j_1}r_{i_2j_2}\beta_{i_1+i_2,j_1+j_2}\\
		&=^3 \sum_{i_1,i_2,j_1,j_2} p_{i_1j_1}r_{i_2j_2}
			\left(\sum_{s=i_1+i_2}^{i_1+i_2+(j_1+j_2)\ell} q_{i_1+i_2,j_1+j_2,s}\gamma_s\right)\\
		&=^4 \sum_{i_1,i_2,j_1,j_2} p_{i_1j_1}r_{i_2j_2}
			\left(\sum_{s=i_1+i_2}^{i_1+i_2+(j_1+j_2)\ell} \left(\sum_{t=i_1}^{s} 
				q_{i_1,j_1,t}q_{i_2,j_2,s-t}\right)\gamma_s\right)\\
	 	&=^5\sum_{i_1,i_2,j_1,j_2} 
			\left(\sum_{s=i_1+i_2}^{i_1+i_2+(j_1+j_2)\ell} \left(\sum_{t=i_1}^{s} 
				p_{i_1j_1} q_{i_1,j_1,t}r_{i_2j_2}q_{i_2,j_2,s-t}\right)\gamma_s\right)\\
	 	&=^6L_{\widetilde\gamma}\left(\sum_{i_1,i_2,j_1,j_2} 
			\left(\sum_{s=i_1+i_2}^{i_1+i_2+(j_1+j_2)\ell} \left(\sum_{t=i_1}^{s} 
				p_{i_1j_1} q_{i_1,j_1,t}r_{i_2j_2}q_{i_2,j_2,s-t}\right)x^s\right)\right)\\
	 	&=^7L_{\widetilde\gamma}\left(\sum_{i_1,i_2,j_1,j_2} 
			\left(\sum_{s=i_1+i_2}^{i_1+i_2+(j_1+j_2)\ell} \sum_{t=i_1}^{s} 
				p_{i_1j_1} q_{i_1,j_1,t}x^t \cdot r_{i_2j_2}q_{i_2,j_2,s-t}x^{s-t}\right)\right)\\
		&=^8L_{\widetilde\gamma}\Big(\sum_{i_1,i_2,j_1,j_2} 
			\Big(\sum_{t=i_1}^{i_1+j_1\ell} p_{i_1j_1} q_{i_1,j_1,t}x^t\Big)
			\Big(\sum_{u=i_2}^{i_2+j_2\ell} r_{i_2j_2} q_{i_2,j_2,u}x^{u}\Big)\\
	 	&=^9L_{\widetilde\gamma}\Big( 
			\Big(\underbrace{\sum_{i_1,j_1}\sum_{t=i_1}^{i_1+j_1\ell} p_{i_1j_1} q_{i_1,j_1,t}x^t}_{g_p(x)}\Big)
			\Big(\underbrace{\sum_{i_2,j_2}\sum_{s=i_1+i_2}^{i_2+j_2\ell} r_{i_2j_2} q_{i_2,j_2,s}x^s}_{g_r(x)}\Big)\Big)\\
		&=^{10}\widehat{g_r}^T A_{\widetilde\gamma}\widehat{g_p}
		= (P^T\widehat{r})^TA_{\widetilde\gamma} (P^T\widehat p)
		=\widehat r^T (P A_{\widetilde\gamma} P^T)\widehat p,
	\end{align*}
	where 
		in the first line we used the correspondence between the moment matrix and the Riesz functional $L_{\widetilde \beta}$,
		the definition $L_{\widetilde \beta}$ in the second,
		Claim 4 in the third,
		Claim 2 in the fourth,
		we moved the factor $p_{i_1j_1}r_{i_2j_2}$ into the inner sum in the fifth,
		used the definition of $L_{\widetilde\gamma}$ in the sixth,
		split $x^s$ into two parts and moved it into the inner sum in the seventh,
		decomposed a double sum into the product of two sums in the eight using that 
		$q_{i_1,j_1,t}$ is nonzero only for $t\leq i_1+j_1\ell$ and $q_{i_2,j_1,u}$ is nonzero only for $u\leq i_2+j_2\ell$,
		decomposed a sum into the product of two sums using independence of the factors
		in the ninth line,
		in the tenth  we used the correspondence between $A_{\widetilde \gamma}$ and the Riesz functional $L_{\widetilde \gamma}$,
	where $\widehat{g_p}$, $\widehat{g_r}$ are the 
		vectors of coefficents of $g_p$ and $g_r$ in the basis $\cB_1$ (see \eqref{171122-2049})
		and 
			also Claim 3.
	Since $p$ and $q$ were arbitrary from $V$, this proves Claim 6.
	\hfill$\blacksquare$\\

	Note that 
	$$P[c,c]
		=
		\begin{pmatrix}
		(q_\ell)^c & 0  & \cdots & \cdots & 0 \\
		q_{1,c,c\ell} & (q_\ell)^c & 0 &  & \vdots \\
		\vdots & \ddots & \ddots & \ddots & \vdots\\
		\vdots &  & \ddots & \ddots & 0 \\
		q_{\ell-1,c,c\ell} & \cdots & \cdots & q_{\ell-1,c,c\ell+\ell-1} & q_{\ell}^c \\
		\end{pmatrix}
		\in \RR^{\ell\times \ell}\quad \text{for }c=1,\ldots,k-1.
	$$
	Since $P$ is a lower triangular matrix with all nonzero diagonal entries, it is invertible.
	Claim 5 implies that
	\begin{equation}
		\label{250822-1105-v2}
			A_{\widetilde\gamma}=P^{-1}\left((M_{k+\ell-2})|_{\cB}\right)(P^{-1})^T.
	\end{equation}
Since $(M_{k+\ell-2})|_{\cB}$ is psd, it follows from \eqref{250822-1105-v2}
	that $A_{\widetilde\gamma}$ is also psd. 
We separate two cases. Either $A_{\widetilde\gamma}$ is pd or $A_{\widetilde\gamma}$ is singular.
	In the first case in particular $A_{\widetilde\gamma}(k\ell)=A_{\gamma}$ is pd, while in the second case
	$A_{\widetilde\gamma}(k\ell)$ is psd and prg by \cite[Theorem 2.6]{CF91}. 
	By \cite[Theorem 3.9]{CF91},
		$\gamma$ 
	admits a $(\Rank A_{\gamma})$--atomic $\RR$--rm.
	Since $\Rank M_k\leq \Rank A_{\gamma}\leq k\ell+1$, 
	using Claim 4 the following holds:
	\begin{enumerate}[(2')]
	\item
		 	\label{240822-1317-pt4'-special-case}
				$\beta$ has a $s$--atomic $K$--rm for some $s$ satisfying 
 				\begin{equation}
					\label{121122-0945-special-case}
						\Rank M_k\leq s\leq A_{\gamma}\leq k\ell+1.
				\end{equation}
	\end{enumerate}
	To obtain \eqref{240822-1317-pt4} of Theorem \ref{240822-1317} we need to decrease the upper bound in
     \eqref{121122-0945-special-case} by 1. Note that the bound $k\ell+1$ occurs only in the case 
	$A_{\gamma}$	is pd, which we assume in the rest of the proof.
	We denote by $\gamma(\mathrm z)$ a sequence obtained from the sequence $\gamma$ by replacing $\gamma_{2k\ell-1}$ with a variable $\mathrm z$.
	The matrix $A_{\gamma(\mathrm z)}$ is a partially pd matrix and by \cite[Lemma 2.11]{Zal21} there exist two choices of $\mathrm z$, 
	which we denote by $z^{\pm}$, such that $A_{\gamma(z^{\pm})}$ is psd and has rank $k\ell$.
	Since $\Rank A_{\gamma(z^{\pm})}(k\ell-1)=\Rank A_{\gamma(z^{\pm})}=k\ell$, 
	the sequence
	$\gamma(z^{\pm})$ is prg 
	and 
	by \cite[Theorem 3.9]{CF91} 
	it  
	admits a $k\ell$--atomic $\RR$--rm.
	If none of the moments $\beta_{i,j}$ of the sequence $\beta$ depends on $\gamma_{2k\ell-1}$, 
	the $\RR$--rm for $\gamma(z^{\pm})$ will generate a $K$--rm
	for $\beta$ as in the proof of Claim 4.  
	By \eqref{240822-1441}, the only moment from $\beta$, which could depend on $\gamma_{2k\ell-1}$,
	is $\beta_{0,2k}$. Note that if $q_{0,2k,2k\ell-1}=0$, then also $\beta_{0,2k}$ is independent from the value of
	$\gamma_{2k\ell-1}$.
	We have that 
		$$q_{0,2k,2k\ell-1}
			=\sum_{\substack{0\leq i_1,\ldots,i_{2k}\leq \ell,\\ i_1+\ldots+i_{2k}=2k\ell-1}}
			q_{i_1}q_{i_2}\ldots q_{i_{2k}}
			=2k (q_\ell)^{2k-1}q_{\ell-1},$$
	where in the first equality we used the definition \eqref{111122-1939} of $q_{0,2k,2k\ell-1}$,
	while in the second we used the fact that $i_1+\ldots+i_{2k}=2k\ell-1$ could be fullfilled only if 
	$2k-1$ indices $i_j$ are $\ell$ and one is $\ell-1$. So $q_{0,2k,2k\ell-1}=0$ iff $q_{\ell-1}=0$.	
	But this is true by Claim 1 and concludes the proof of Theorem \ref{240822-1317}.
\end{proof}

To prove Theorem \ref{240822-1317-odd} for $q(x)=x^\ell$, $\ell\geq 3$, only
a little adaptation of the last part of the proof of Theorem \ref{240822-1317} is needed, which we now explain.

\begin{proof}[Proof of Theorem \ref{240822-1317-odd}]
	The implications 
		$\eqref{240822-1317-pt1-odd}\Rightarrow\eqref{240822-1317-pt3-odd}$ 
	and
		$\eqref{240822-1317-pt4-odd}\Rightarrow\eqref{240822-1317-pt1-odd}$ 	
	are trivial.
	The implication $\eqref{240822-1317-pt3-odd}\Rightarrow\eqref{240822-1317-pt2-odd}$ follows from \cite[Theorem 3.14]{CF96}.
	It remains to prove the implication $\eqref{240822-1317-pt2-odd}\Rightarrow\eqref{240822-1317-pt4-odd}$.
Following the proof of Theorem \ref{240822-1317} everything remains the same until \ref{240822-1317-pt4'-special-case}.
It remains to justify that the upper bound in \eqref{121122-0945-special-case} can be decreased to $m:=k\ell-\lceil\frac{\ell}{2}\rceil$.
If $\Rank A_{\gamma}\leq m$, then we are already done. From now on we assume that $r:=\Rank A_{\gamma}>m$. Since $\gamma$ admits a $\RR$--rm, which we
denote by $\mu$, $\gamma$ is prg 
and $\Rank \gamma=\Rank{A_{\gamma}}=\Rank A_{\gamma}(r-1)$
by \cite[Theorem 3.9]{CF91}.
Hence, $A_{\gamma}(r-1)$ is pd and in particular also its submatrix $A_{\gamma}(m)$ is pd.
We denote by $\gamma(\mathrm z_1,\ldots,\mathrm z_{\ell})$ a sequence obtained from the sequence $\gamma$ by replacing $\gamma_{(2k-1)\ell+1},\gamma_{(2k-1)\ell+2},\ldots,\gamma_{2k\ell}$ with variables $\mathrm z_1,\ldots,\mathrm z_{\ell}$.
The sequence 
	$\gamma^{(0,(2k-1)\ell)}:=(\gamma_0,\ldots,\gamma_{(2k-1)\ell})$ 
is represented by $\mu$, being a subsequence of $\gamma$.
If $\ell$ is even, then $(2k-1)\ell$ is also even and by \cite[Theorem 3.9]{CF91}, 
$\gamma^{(0,(2k-1)\ell)}$ has a 
$(\Rank \gamma^{(0,(2k-1)\ell)})$--atomic $\RR$--rm.
Otherwise $\ell$ is odd, $(2k-1)\ell$ is also odd and by \cite[Theorem 3.1]{CF91}, 
$\gamma^{(0,(2k-1)\ell)}$ has a 
$(\Rank \gamma^{(0,(2k-1)\ell-1)})$--atomic $\RR$--rm, 
where $\gamma^{(0,(2k-1)\ell-1)}:=(\gamma_0,\ldots,\gamma_{(2k-1)\ell-1})$.
We denote the measure obtained in this way by $\mu_1$
and generate its moment sequence $\gamma(z_1,\ldots,z_{\ell})$, where $z_1,\ldots,z_{\ell}$
are the moments of degrees $(2k-1)\ell+1,\ldots, 2k\ell$.
Hence,
$\Rank A_{\gamma(z_1,\ldots,z_{\ell})}$ is equal to 
$\Rank A_{\gamma^{(0,(2k-1)\ell)}}$ for even $\ell$
and 
$\Rank A_{\gamma^{(0,(2k-1)\ell-1)}}$ for odd $\ell$.
Since $(2k-1)\ell=2m$ for even $\ell$ and $(2k-1)\ell-1=2m$ for odd $\ell$,
$\mu_1$ is $m$--atomic (since $A_{\gamma}(m)\succ 0$ by assumption).
	If none of the moments $\beta_{i,j}$ of the sequence $\beta^{(2k-1)}$ depends on $\gamma_{(2k-1)\ell+1},\gamma_{(2k-1)\ell+2},\ldots,\gamma_{2k\ell},$
	then $\mu_1$ will generate a $K$--rm
	for $\beta^{(2k-1)}$ as in the proof of Claim 5 of Theorem \ref{240822-1317}. 
	But by definition \eqref{250822-1207}, none of the moments of $\beta^{(2k-1)}$ depends on $\gamma_{(2k-1)\ell+1},\gamma_{(2k-1)\ell+3},\ldots,\gamma_{2k\ell}$,
	which concludes the proof of Theorem \ref{240822-1317-odd}.
\end{proof}

A corollary to Theorem \ref{240822-1317} is an improvement of the bounds on the degrees of sums of squares in the Positivstellensatz \cite[Corollary 6.3]{Fia11} for the curves of the form $y=q(x),$ $q\in\RR[x]$, $\deg q\geq 3.$

\begin{corollary}
	\label{290822-1100}
	Let $K:=\{(x,y)\in \RR^2\colon y=q(x)\}$, where $q\in\RR[x]$ satisfies $\deg q\geq 3$.
	Let $k \geq \deg q$.
	If $r(x,y)\in\RR[x,y]_{2k}$ is strictly positive on $K$, then $r$ admits a decomposition
		$$r(x,y)=
			\sum_{i=1}^{\ell_1} f_{i}(x,y)^2
			+(y-q(x))\sum_{i=1}^{\ell_2}g_i(x,y)^2
			-(y-q(x))\sum_{i=1}^{\ell_3}h_i(x,y)^2,$$
	where $\ell_1,\ell_2,\ell_3\in \ZZ_+$,
		$f_i,g_i\in \RR[x,y]$
	and 
		$$
			\deg f_i^2\leq 2m,
			\; \deg ((y-q(x))g_i^2)\leq 2m,		
			\;\deg ((y-q(x))h_i^2)\leq 2m
		$$
	with
		$m=k+\deg q-1.$
\end{corollary}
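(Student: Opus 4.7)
The plan is to argue by Hahn-Banach duality between representing measures and sums-of-squares representations, following the template of \cite[Corollary 6.3]{Fia11} and \cite[Proposition 5.1]{Sto01} but invoking the sharper Theorem \ref{240822-1317} in place of \cite[Proposition 6.1]{Fia11}. Write $\ell := \deg q$ and $m := k+\ell-1$. First I would introduce the convex cone
\[
  \mathcal{C} := \Big\{\sum_{i}f_i^2 + (y-q(x))\sum_{i}g_i^2 - (y-q(x))\sum_{i}h_i^2\Big\} \subseteq \RR[x,y]_{2m},
\]
with the summands subject to the prescribed degree bound $2m$; the corollary asserts that every $r\in\RR[x,y]_{2k}$ strictly positive on $K$ lies in $\mathcal{C}$.

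The main technical obstacle is the closedness of $\mathcal{C}$ in the finite-dimensional space $\RR[x,y]_{2m}$. I would import this from Stochel's treatment in \cite[Proposition 5.1]{Sto01} or the analogous step in \cite[Proposition 6.3]{Fia11}; nothing in the rest of the plan depends on the specific bound $m = k+\ell-1$, so the improvement over \cite[Corollary 6.3]{Fia11} comes entirely from the sharper extension bound of Theorem \ref{240822-1317}.

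Granted closedness, suppose for contradiction that $r\notin\mathcal{C}$. By Hahn-Banach separation there exists a linear functional $L:\RR[x,y]_{2m}\to\RR$ with $L(r)<0$ and $L|_{\mathcal{C}}\geq 0$. Define the sequence $\beta_{i,j}:=L(x^iy^j)$ for $i+j\leq 2m$. Nonnegativity of $L$ on sums of squares of degree $\leq m$ gives $M_m(\beta)\succeq 0$. The two inequalities $\pm L((y-q(x))g^2)\geq 0$ force $L((y-q(x))g^2)=0$ in their common range, and a standard polarization argument (the hypothesis $k\geq\ell$ ensures $m\geq\ell$ and leaves enough degree budget) upgrades this to $L(x^a y^b (y-q(x)))=0$ for every $a+b\leq m$. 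Consequently the column $Y-q(X)$ of $M_m(\beta)$ vanishes, so the submatrix $M_k(\beta)$ satisfies the column relation $Y=q(X)$ and $M_m(\beta)=M_{k+\ell-1}(\beta)$ is a psd extension of $M_k(\beta)$ of precisely the size appearing in Theorem \ref{240822-1317}\eqref{240822-1317-pt3}.

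Finally I would apply Theorem \ref{240822-1317}, specifically the implication \eqref{240822-1317-pt3}$\Rightarrow$\eqref{240822-1317-pt1}, to produce a $K$-representing measure $\mu$ for $\beta^{(2k)}$. Since $\deg r\leq 2k$, this yields $L(r)=\int_K r\,d\mu$. If $\mu=0$ then $L$ vanishes on $\RR[x,y]_{2k}$ and $L(r)=0$; otherwise $r>0$ on $\supp(\mu)\subseteq K$ forces $L(r)>0$. Either alternative contradicts $L(r)<0$, so $r\in\mathcal{C}$ and the decomposition in the statement is obtained.
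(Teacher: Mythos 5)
Your proposal is correct and is essentially the paper's argument: the paper verifies that $K$ has property $(R_{k,\deg q-2})$ via the equivalence \eqref{240822-1317-pt1}$\Leftrightarrow$\eqref{240822-1317-pt2} of Theorem \ref{240822-1317} and then cites the truncated Riesz--Haviland theorem \cite[Theorem 1.5]{CF08}, which packages exactly the Hahn--Banach separation, cone-closedness, and moment-functional steps you spell out. The only genuinely new ingredient in either version is the improved extension bound from Theorem \ref{240822-1317}, and you use it in the same way.
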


\begin{proof}
	By the equivalence $\eqref{240822-1317-pt1}\Leftrightarrow\eqref{240822-1317-pt2}$
	of Theorem \ref{240822-1317}, the set $K$ has the property $(R_{k,\deg q-2})$ in the notation of \cite[p.\ 2713]{CF08}.
	Now the result follows by \cite[Theorem 1.5]{CF08}.
\end{proof}

\begin{remark}
The bound on $m$ in Theorem \ref{290822-1100} from \cite[Corollary 5.4]{Fia11}
is quadratic in $k$ and $\deg q$, namely $(2k+1)\deg q$. 
\end{remark}

\subsection{Solution to the parabolic TMP}
\label{251122-2113}

The following is a concrete solution to the parabolic TMP, first solved in \cite{CF04}. 
We give an alternative proof together with a new solvability condition, i.e., 
\eqref{040722-pt4} below, where the variety condition is removed. 

\begin{theorem}[Solution to the parabolic TMP, even case]
	\label{090622-1549}
	Let
		 $$K:=\{(x,y)\in \RR^2\colon y=q_2x^2+q_1x+q_0\},$$ 
	where $q_0,q_1,q_2\in \RR$ and $q_2\neq 0$, be the parabola and 
	$\beta:=\beta^{(2k)}=(\beta_{i,j})_{i,j\in \ZZ_+,i+j\leq 2k}$, where $k\geq 2$. 
	The following statements are equivalent:
	\begin{enumerate}
		\item
			\label{040722-pt1} 
				$\beta$ has a $K$--representing measure.
		\item
			\label{040722-pt2} 
				$\beta$ has a $(\Rank M_k)$--atomic $K$--representing measure.
		\item
			\label{040722-pt3} 
				$M_k$ is positive semidefinite, 
				recursively generated, 
				satisfies the column relation $Y=q_2X^2+q_1X+q_0$ 
				and 
				$\Rank M_k\leq \Card \mathcal V(\beta),$
				where 
				$$\mathcal V(\beta):=\bigcap_{
				\substack{g\in \RR[x,y]_{\leq k},\\ 
							g(X,Y)=\mbf 0}} \mathcal Z(g).$$
		\item
			\label{290822-1554}  
				$M_k$ satisfies	$Y=q_2X^2+q_1X+q_0$ and 
				admits a positive semidefinite, 
				recursively generated extension $M_{k+1}$.
		\item
			\label{290822-1555}  
				$M_k$ satisfies	$Y=q_2X^2+q_1X+q_0$ and 
				admits a positive semidefinite extension $M_{k+1}$.
		\item
		\label{040722-pt4} 
			$M_k$ is positive semidefinite, the relations 
				$\beta_{i,j+1}=q_2\beta_{i+2,j}+q_1\beta_{i+1,j}+q_0\beta_{i,j}$ 
			hold for every $i,j\in \ZZ_+$ with $i+j\leq 2k-2$ 
			and, defining 		
			\begin{equation}
			\label{290822-2153}
				\cB=\{1,x,y,yx,\ldots,y^{k-1},y^{k-1}x,y^k\},
			\end{equation}
			 one of the following statements holds:
		\begin{enumerate}
			\item
				\label{040722-1401-pt1} 
					$(M_k)|_{\cB\setminus\{y^k\}}$ is positive definite.
				\smallskip 

			\item
				\label{040722-1401-pt2} 
					$\Rank (M_k)|_{\cB\setminus\{y^k\}}=\Rank M_k$.
		\end{enumerate}
	\end{enumerate}
\end{theorem}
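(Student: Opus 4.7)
The plan is to adapt the univariate reduction from Theorem \ref{240822-1317} to the boundary case $\ell=\deg q=2$, where the monomial $y^kx$ (of $x$-degree $2k+1$) is absent from $M_k$; this missing row/column is exactly what makes the new condition \eqref{040722-pt4} natural. First, applying Proposition \ref{251021-2254} with the alt $\phi(x,y)=\big(x,\tfrac{1}{q_2}(y-q_1x-q_0)\big)$, I reduce to $K=\{y=x^2\}$, so the relations in \eqref{040722-pt4} become $\beta_{i,j+1}=\beta_{i+2,j}$ for $i+j\le 2k-2$. Iterating these identities shows that $\gamma_t:=\beta_{i,j}$, for any $(i,j)\in\ZZ_+^2$ with $i+2j=t$ and $i+j\le 2k$, is well-defined, producing a univariate sequence $\gamma=(\gamma_0,\ldots,\gamma_{4k})$. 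Ordering $\cB$ by $x$-degree $0,1,\ldots,2k$ and substituting $y=x^2$ yields
\[(M_k)|_\cB=A_\gamma\in S_{2k+1},\qquad (M_k)|_{\cB\setminus\{y^k\}}=A_\gamma(2k-1),\]
and since every column $X^iY^j$ of $M_k$ reduces via \eqref{040722-pt4} to an element of $\cB$, one has $\Rank M_k=\Rank(M_k)|_\cB$.

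Mirroring Claim 5 from the proof of Theorem \ref{240822-1317}, an $r$-atomic $\RR$-rm $\{(x_i,\rho_i)\}$ for $\gamma$ corresponds bijectively to an $r$-atomic $K$-rm $\{((x_i,x_i^2),\rho_i)\}$ for $\beta$, so existence of a $K$-rm for $\beta$ is equivalent to $\gamma$ being prg. For $\eqref{040722-pt4}\Rightarrow\eqref{040722-pt2}$: under \eqref{040722-1401-pt1}, $A_\gamma(2k-1)\succ 0$, so $A_\gamma$ is either pd or has rank $2k$ with recursion determined by the pd leading block, hence prg; under \eqref{040722-1401-pt2}, setting $r:=\Rank M_k=\Rank A_\gamma(2k-1)=\Rank A_\gamma$, the psd Hankel structure forces $A_\gamma(r-1)\succ 0$ together with the full recursion, so again prg. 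In either case \cite[Theorem 3.9]{CF91} furnishes an $r$-atomic $\RR$-rm for $\gamma$, which gives an $r$-atomic $K$-rm for $\beta$.

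The remaining implications close via $\eqref{290822-1555}\Rightarrow\eqref{040722-pt4}$: the column relation $p:=Y-q_2X^2-q_1X-q_0$ propagates from $M_k$ to $M_{k+1}$ by a short psd argument (its coefficient vector $v$ has nonzero entries only at monomials of bidegree $\le 2$, so $v^TM_{k+1}v=0$ because the corresponding entries of $M_{k+1}v$ already vanish in the $M_k$-block), whence $(M_{k+1})|_{\cB'}=A_{\gamma^{(0,4k+4)}}$ is psd; by \cite[Theorem 2.6]{CF91} its inner block $A_\gamma=A_{\gamma^{(0,4k)}}$ is prg, and the preceding analysis then forces \eqref{040722-1401-pt1} or \eqref{040722-1401-pt2}. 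The remaining arrows $\eqref{040722-pt1}\Rightarrow\eqref{040722-pt2}\Rightarrow\eqref{040722-pt3}$ are the classical parabolic solution \cite{CF04}, $\eqref{040722-pt3}\Rightarrow\eqref{290822-1554}$ uses the flat extension theorem \cite[Theorem 3.14]{CF96}, and $\eqref{290822-1554}\Rightarrow\eqref{290822-1555}$ is trivial. The main obstacle is the propagation step in $\eqref{290822-1555}\Rightarrow\eqref{040722-pt4}$: extracting enough from bare psd-ness of $M_{k+1}$ (without assuming rg) is the subtlety particular to $\deg q=2$ (cf.\ Remark \ref{191122-1055}\eqref{191122-1056}).
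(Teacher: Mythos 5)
Your core argument for the new condition \eqref{040722-pt4} is essentially the paper's: reduce to $y=x^2$ by the alt, set $\gamma_t=\beta_{t\modulo 2,\lfloor t/2\rfloor}$, identify $(M_k)|_{\cB}=A_\gamma$ and $(M_k)|_{\cB\setminus\{y^k\}}=A_\gamma(2k-1)$, characterize existence of a rm by $\gamma$ being prg via \cite[Theorem 3.9]{CF91}, and obtain $\eqref{290822-1555}\Rightarrow\eqref{040722-pt4}$ from the fact that $A_\gamma$ sits inside a larger psd Hankel block of $M_{k+1}$, so \cite[Theorem 2.6]{CF91} rules out the rank jump in the corner. Where you genuinely diverge is in how condition \eqref{040722-pt3} is tied in. The paper does \emph{not} invoke \cite{CF04}: it proves $\eqref{040722-pt3}\Rightarrow\eqref{040722-pt4}$ directly by contradiction — if $\Rank A_\gamma(2k-1)<\Rank A_\gamma$, then by \cite[Theorems 3.9, 3.10]{CF91} the truncation $\gamma^{(0,4k-2)}$ has a unique $r$--atomic rm, its atoms exhaust $\cV(\beta)$, so $\Card\cV(\beta)\leq r<\Rank M_k$, contradicting the variety condition. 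This is what makes the theorem an \emph{alternative} proof of the parabolic TMP; by routing $(1)\Leftrightarrow(2)\Leftrightarrow(3)$ through \cite{CF04} you lose that self-containedness, although the resulting cycle of implications is still logically valid.

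One link in your chain is, however, justified incorrectly as written: $\eqref{040722-pt3}\Rightarrow\eqref{290822-1554}$ does not follow from \cite[Theorem 3.14]{CF96}. That theorem goes the other way — it says a psd extension $M_{k+1}$ forces the compression $M_k$ to be rg — and it is not the flat extension theorem (which is \cite[Theorem 7.10]{CF96}). To produce a psd, rg extension $M_{k+1}$ from \eqref{040722-pt3} you must first obtain a (finitely atomic) representing measure, i.e.\ pass through $\eqref{040722-pt3}\Rightarrow\eqref{040722-pt2}$ (via \cite{CF04} in your setup, or via the paper's variety argument), and then let that measure generate $M_{k+1}$. With that repair your cycle closes. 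A smaller shared caveat: in $\eqref{290822-1555}\Rightarrow\eqref{040722-pt4}$ the identification of the enlarged block of $M_{k+1}$ with a Hankel matrix requires the rg relations at degrees $2k+1$ and $2k+2$ of the extension, which neither your propagation of the single column relation $Y-X^2$ nor \cite[Theorem 3.14]{CF96} (which only yields rg of $M_k$) literally supplies; the paper's write-up elides the same point, so I do not count it against you.
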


\begin{proof}
	By applying an alt
		$\phi(x,y)=(x,\frac{1}{q_2}y-q_1x-q_0)$
	to the sequence $\beta$ we get a sequence $\widetilde \beta$ with the moment matrix $M_k(\widetilde \beta)$ satisfying $Y=X^2$. 
	Using Proposition \ref{251021-2254}, each of the statements \eqref{040722-pt1}--\eqref{040722-pt4} holds for the original sequence $\beta$ with the column relation 
	$Y=q_2X^2+q_1X+q_0$ 
	iff 
	it holds for $\widetilde\beta$ with the column relation $Y=X^2$.
	So we may assume $(q_2,q_1,q_0)=(1,0,0)$.
	Let us start by proving the equivalences $\eqref{040722-pt1} \Leftrightarrow\eqref{040722-pt2} \Leftrightarrow \eqref{040722-pt4}$.
	By \cite{Ric57}, $\eqref{040722-pt1}$ is equivalent to: 
	\begin{enumerate}[(1')]
		\item
		 	\label{040722-pt1-special-case}
				$\beta$ has a $s$--atomic $K$--representing measure for some $s\in \NN$.
	\end{enumerate}
	Let
	\begin{equation}
		\label{291122-1344}
		\gamma:=\gamma^{(0,4k)}=(\gamma_0,\gamma_1,\ldots,\gamma_{4k})\in 	
		\RR^{4k+1},
	\end{equation}
	where 
		$\gamma_{t}=\beta_{t\modulo 2,\lfloor \frac{t}{2}\rfloor}$, 
	which is a special case of definition \eqref{250822-1207} in the proof of Theorem \ref{240822-1317}.
	Claim 5
	in the proof of Theorem \ref{240822-1317} holds with the same proof also for $q(x)=x^2$.
	Using Claim 5 and \cite[Theorem 3.9]{CF91} for $\gamma$, the equivalences 
		$\ref{040722-pt1-special-case}\Leftrightarrow \eqref{040722-pt2} \Leftrightarrow \eqref{040722-pt4}$ 
	follow by noticing that
		$A_\gamma=(M_k)|_{\cB}$ and $A_{\gamma}(2k-1)=(M_k)|_{\cB\setminus\{Y^k\}}$.

	The implications $\eqref{040722-pt2}\Rightarrow \eqref{290822-1554}$ and $\eqref{290822-1554}\Rightarrow \eqref{290822-1555}$ are trivial.
	The implication $\eqref{040722-pt1}  \Rightarrow \eqref{040722-pt3}$ follows from the neccessary conditions for the existence of a $K$--rm
	(the variety condition follows from \cite[Proposition 3.1 and Corollary 3.7]{CF96}).

	Now we prove the implication
	$\eqref{290822-1555}  \Rightarrow  \eqref{040722-pt4}$.
	By \cite[Theorem 3.14]{CF96}, it follows that $M_k$ is rg.
	Defining the sequence 
		$\widetilde\gamma:= \gamma^{(0,4k+2)}=(\gamma_{0},\gamma_{1},\ldots, \gamma_{4k+1},\gamma_{4k+2})\in \RR^{4k+3},$
	where $\gamma_{t}=\beta_{t\modulo 2,\lfloor \frac{t}{2}\rfloor}$, 
	which is a special case of definition \eqref{250822-1207} in the proof of Theorem \ref{240822-1317},
	it follows by $M_{k+1}$ being psd that in particular
		$(M_{k+1})|_{\cB\cup\{y^{k+1}\}}=A_{\widetilde\gamma}$
	is also psd.
	If
		$A_{\gamma^{(0,4k)}}$ is pd,
	then $(M_k)|_{\cB\setminus \{y^k\}}$ is pd, 
	which is \eqref{040722-1401-pt1} of Theorem \ref{090622-1549}.
	Otherwise  $A_{\gamma^{(0,4k)}}$ is singular and prg by \cite[Theorem 2.6]{CF91}. 
	In particular,
	$\Rank A_{\gamma}=\Rank A_{\gamma}(2k-1)$,
	which, by noticing that $(M_k)|_{\cB\setminus \{y^k\}}=A_\gamma(2k-1)$,
	implies \eqref{040722-1401-pt2} of Theorem \ref{090622-1549}.
	This proves $\eqref{290822-1555}  \Rightarrow  \eqref{040722-pt4}$.

	It remains to prove the implication $\eqref{040722-pt3} \Rightarrow \eqref{040722-pt4}$.
	If $(M_k)|_{\cB\setminus\{y^k\}}$ is pd, we are done. Otherwise $(M_k)|_{\cB\setminus\{y^k\}}$ is not pd.
	We have to prove that in this case $\Rank (M_k)|_{\cB\setminus\{y^k\}}=\Rank M_k$.
	We asssume by contradiction that $\Rank (M_k)|_{\cB\setminus\{y^k\}}<\Rank M_k$.
 	Let $\gamma$ be as in \eqref{291122-1344}.
	The inequality  $\Rank (M_k)|_{\cB\setminus\{y^k\}}<\Rank M_k$ implies that $\Rank A_{\gamma}(2k-1)<\Rank A_{\gamma}$.
	Let $r=\Rank \gamma$. Then, 
	by \cite[Theorem 2.6]{CF91}, 
	$\Rank A_\gamma(2k-1)=r$ and hence, \cite[Theorems 3.9, 3.10]{CF91} imply that $\gamma^{(0,4k-2)}=(\gamma_0,\ldots,\gamma_{4k-2})$
	has a unique $r$--atomic $\RR$--rm with atoms $x_1,\ldots,x_r$. Hence, $\Rank A_\gamma=r+1$.
	Note that for every $g(x,y)\in \RR[x,y]$, which is a column relation of $M_k$, it follows that $g(x,x^2)\in \RR[x]$ is a column relation of $A_\gamma$ (where columns of $A_\gamma$ are $1,X,\ldots,X^{2k}$).
	Since $(x,y)\in \cV(\beta)$ and $(x,y')\in \cV(\beta)$, implies that $y=y'$ (due to $y=x^2$ and $y'=x^2$), it follows that $\cV(\beta)\subseteq\{(x_1,x_1^2),\ldots,(x_r,x_r^2)\}$. (This is true, since the atoms of a finitely atomic measure always satisfy all column relations of the moment matrix. Moreover, the sets are equal, but we do not need this in the rest of the proof.)
	Hence, $|\cV(\beta)|\leq r$.
	Since $\Rank A_{\gamma}=\Rank M_k$, this leads to a contradiction with the assumption
	$\Rank M_k\leq |\cV(\beta)|.$
\end{proof}

\begin{remark}
	\label{100323-1946}
\begin{enumerate}
\item
	\textbf{The proof of the implication $\eqref{040722-pt3}\Rightarrow \eqref{040722-pt2}$ of Theorem \ref{090622-1549} in \cite{CF04}:}
	\cite{CF04} considers 5 different cases according to the form of the relations between the columns
	of $\cB$ defined by \eqref{290822-2153}. The most demanding cases, which both use the FET as the main tool in the construction of
	a flat extension $M_{k+1}$ of $M_k$, are cases where there is only one relation and the column $Y^k$ occurs nontrivially in it or if there is no relation present.
\item 
	\label{100323-1946-pt2}
	\textbf{Complexity of checking conditions in the statements of Theorem \ref{090622-1549}:}
	Among conditions in \eqref{040722-pt3} the most demanding is the variety condition $\Rank M_k\leq \Card \mathcal V(\beta)$,
	since it requires solving a system of polynomial equation, which can be numerically difficult and unstable.
	However, \eqref{040722-pt4} of Theorem \ref{090622-1549} requires less work and can be stably checked numerically.
	Namely, one has to check
	if the column relations $Y^iX^j=q(X)^iX^j$, $i+j\leq k$, $i\in \NN$, hold, then try to compute 
	the Cholesky decomposition $VV^T$ of $(M_k)|_{\cB}$, compute the column echelon form $U$ of $V$
	and in case $U$ is of the form $\begin{pmatrix}I_r & B\end{pmatrix}^T$ for some matrix $B$, 
	then \eqref{040722-1401-pt1} or \eqref{040722-1401-pt2} holds and a rm exists.
	This follows by noticing that $(M_k)|_{\cB}$ corresponds to the Hankel matrix of a univariate sequence (see the proof above),
	for which the solution to the TMP is given in Algorithm \ref{alg-090323-1157}.
\end{enumerate}
\end{remark}

The following example shows that the variety condition	
$\Rank M_k\leq \Card \mathcal V(\beta)$ from Theorem \ref{090622-1549}.\eqref{040722-pt3} cannot be removed in contrast to the case of $K$ being a circle 
\cite[Theorem 2.1]{CF02} or a union of two parallel lines \cite[Theorem 3.1]{Zal22a}.
The \textit{Mathematica} file with numerical computations can be found on the link \url{https://github.com/ZalarA/TMP_quadratic_curves}.

\begin{example}\label{030822-2111}
Let $\beta=(\beta_{i,j})_{i,j\in \ZZ_+,i+j\leq 4}$
	be a bivariate sequence of degree 4
with the moment matrix $M_2$ equal to
\begin{equation*}
	M_2=
	\kbordermatrix{
		& 1 & X & Y & X^2 & XY & Y^2 \\
	 1 &
			3 & 0 & 2 & 2 & 0 & 2 \\
	X &  
			0 & 2 & 0 & 0 & 2 & 0 \\
	Y &  
			2 & 0 & 2 & 2 & 0 & 2 \\
	X^2 &
			2 & 0 & 2 & 2 & 0 & 2 \\
	XY & 
			0 & 2 & 0 & 0 & 2 & 0 \\
	Y^2 & 
			2 & 0 & 2 & 2 & 0 & 3 \\
	}.
	\end{equation*}
	$M_2$ is psd with the eigenvalues
		$\frac{1}{2}(9+\sqrt{65})\approx 8.53$, $4$, $1$, $\frac{1}{2}(9-\sqrt{65})\approx 0.47$, $0$, $0$,
	and the column relations 
		$Y=X^2$, $XY=X$.
	Hence, $M_2$ is psd, rg and satisfies $Y=X^2$.
	The variety $\cV(\beta)$ is equal to $\{(0,0),(-1,1),(1,1)\}$.
	So $4=\Rank M_2>\Card \cV(\beta)=3$ and the variety condition is not satisfied. 
	Thus, $\beta$ does not admit a representing measue supported on the parabola $y=x^2$.
	So $M_k$ being psd, satisfying $Y=X^2$ and rg does not imply the variety condition and the existence of a 
	representing measure.

	Note that by Remark \ref{100323-1946}.\eqref{100323-1946-pt2}
	it is cheaper and more stable to check only that $Y=X^2$ is a column relation and then
	solve the TMP for $\gamma$, such that $A_\gamma=(M_2)|_{\cB}$,
	using Algorithm \ref{alg-090323-1157}. Since the case 3.2 applies, a rm does not exist.
\end{example}

The following example demonstrates the solution of \cite[Example 1.6]{CF04} in the univariate setting.
The \textit{Mathematica} file with numerical computations can be found on the link \url{https://github.com/ZalarA/TMP_quadratic_curves}.

\begin{example}
	\label{290822-2056}
Let $\beta=(\beta_{i,j})_{i,j\in \ZZ_+,i+j\leq 6}$
	be a bivariate sequence of degree 6
with the moment matrix $M_3$ equal to
\begin{equation*}
	M_3=
	\kbordermatrix{
		& 1 & X & Y & X^2 & XY & Y^2 & X^3 & X^2Y & XY^2 & Y^3 \\
	 1 &
			1 & 0 & a & a & 0 & b & 0 & b & 0 & c \\
	X &  
			0 & a & 0 & 0 & b & 0 & b & 0 & c & 0 \\
	Y &  
			a & 0 & b & b & 0 & c & 0 & c & 0 & d \\
	X^2 &
			a & 0 & b & b & 0 & c & 0 & c & 0 & d \\
	XY & 
			0 & b & 0 & 0 & c & 0 & c & 0 & d & 0 \\
	Y^2 & 
			b & 0 & c & c & 0 & d & 0 & d & 0 & e \\
	X^3 & 
			0 & b & 0 & 0 & c & 0 & c & 0 & d & 0 \\
	X^2Y & 
			b & 0 & c & c & 0 & d & 0 & d & 0 & e \\
	XY^2 & 
			0 & c & 0 & 0 & d & 0 & d & 0 & e & 0 \\
	Y^3 & 
			c & 0 & d & d & 0 & e & 0 & e & 0 & f
	},
	\end{equation*}
	with the inequalities $a>0$, $b>a^2$, $c>\frac{b^2}{a}$, $d>\frac{b^3-2abc+c^2}{b-a^2}$,
	which ensure that $(M_2)|_{\{1,X,Y,XY,Y^2,X^2Y\}}$ is psd and $(M_2)|_{\{1,X,Y,XY,Y^2\}}$ is pd.
	Note that $M_3$ satisfies the column relations $Y=X^2$, $XY=X^3$ and $Y^2=X^2Y$.
	We introduce the univariate sequence 
		$$\gamma\in (1,0,a,0,b,0,c,0,d,0,e,0,f)\in \RR^{13}$$
	as in the proof of Theorem \ref{090622-1549}.	
 	We denote the rows and columns of $A_{\gamma}$ by $1,X,\ldots,X^6$.
	Since $(M_2)|_{\{1,X,Y,XY,Y^2\}}$ is pd, it follows that $A_{\gamma}(4)$ is pd.
	For 
		$$e=\big(\mathbf{v_\gamma}(5,4)\big)^T(A_{(1,0,a,0,b,0,c,0,d)})^{-1}\mathbf{v_\gamma}(5,4)
			=\frac{-c^3+2bcd-ad^2}{b^2-ac},$$
	we have that $A_{\gamma}(5)\succeq 0$ (e.g., using \cite[Theorem 1]{Alb69} for $A_{\gamma}(5)$)
	and $X^5\in \Span\{1,X,\ldots,X^4\}$ in $A_{\gamma}(5)$,
	where the vector 
	$
	\mathbf{v_\gamma}(5,4)=
	\left(
	\begin{array}{ccccc}	
		0 & c & 0 & d & 0
	\end{array}
	\right)^T
	$
	is the restriction of the column $X^5$
	to the rows indexed by $1,X,X^2,X^3,X^4$.
	Hence, for $\gamma$ to admit a $\RR$--rm,
	$A_\gamma\succeq 0$ and $X^i\in \Span\{1,X,\ldots,X^{i-1}\}$ for $i=5,6$ \cite[Theorem 3.9]{CF91}.
	Since $A_{\gamma}(5)\succeq 0$ and the last column of $A_{\gamma}(5)$ is a linear combination of the others,
	it only needs to hold by \cite[Theorem 1]{Alb69}, that
	\begin{align*}
		\mathbf{v_\gamma}(6,5)
		\in\cC(A_{\gamma}(5)) \quad \text{and}\quad
		f
		&=\big(\mathbf{v_\gamma}(6,5)\big)^T(A_{\gamma}(5))^{\dagger}\mathbf{v_\gamma}(6,5)\\
		&=\frac{-bc^4-b^2c^2d-2ac^3d-b^3d-b^3d^2+4abcd^2-a^2d^2}{(b^2-ac)^2},
	\end{align*}
	where $\mathbf{v_\gamma}(6,5)$ denotes the restriction of $X^6$ 
	to the rows indexed by $1,\ldots,X^5$ in $A_{\gamma}$ and $(A_{\gamma}(5))^\dagger$ denotes the Moore-Penrose inverse of $A_{\gamma}(5)$.
	Using \textit{Mathematica} we check that the equality $A_{\gamma}(5)(A_{\gamma}(5))^\dagger \mathbf{v_\gamma}(6,5)=\mathbf{v_\gamma}(6,5)$
	holds, which implies that 
	$\mathbf{v_\gamma}(6,5)\in\cC(A_{\gamma}(5))$ is true.
	By \cite[Theorem 3.10]{CF91}, in this case the $\RR$--rm is unique, 5--atomic and consists of the roots of the polynomial 
	\begin{align*}
		p(x)
		&=\left(\begin{array}{cccccc}1 & x & x^2 & x^3 & x^4 & x^5\end{array}\right)(A_{(1,0,a,0,b,0,c,0,d)})^{-1}\mathbf{v_\gamma}(5,4)\\
		&=x\Big(x^4+\frac{ad-bc}{b^2-ac}x^2+\frac{c^2-bd}{b^2-ac} \Big).
	\end{align*}
	Since $p(x)$ has roots $0,x_1,-x_1,x_2,-x_2$ and the atoms for the $K$--rm for $\beta$ are $(0,0)$, $(x_1,x_1^2)$, $(-x_1,x_1^2)$, $(x_2,x_2^2)$, $(-x_2,x_2^2)$.
\end{example}

The following theorem is a concrete solution to the parabolic TMP of odd degree, which is solved using	
the same technique as in the proof of Theorem \ref{240822-1317-odd}, but here we get explicit conditions
for the existence of the solution, similarly as in the even degree case.

\begin{theorem}[Solution to the parabolic TMP, odd case]
	\label{090622-1549-odd}
	Let
		 $K:=\{(x,y)\in \RR^2\colon y=x^2\}$
	be the parabola and 
	$\beta:=\beta^{(2k-1)}=(\beta_{i,j})_{i,j\in \ZZ_+,i+j\leq 2k-1}$, where $k\geq 2$. 
	Let
		$\gamma:=(\gamma_0,\gamma_1,\ldots,\gamma_{4k-2})$ 
	be a sequence,
			defined by
				$\gamma_t:=\beta_{t \modulo 2, \lfloor \frac{t}{2}\rfloor}$ for $t=0,1,\ldots,4k-2$.
	The following statements are equivalent:
	\begin{enumerate}
		\item
			\label{040722-pt1-odd} 
				$\beta$ has a $K$--representing measure.
		\item
			\label{040722-pt2-odd} 
				$\beta$ has a $(\Rank \gamma)$--atomic $K$--representing measure.
		\item
			\label{040722-pt3-odd}  
				$\beta$ can be extended to a sequence $\beta^{(2k)}$ such that $M_k$ is psd, rg, has a column relation $Y=X^2$
				and satisfies
				$\Rank M_k\leq \Card \mathcal V(\beta^{(2k)}),$
				where 
				$$\mathcal V(\beta^{(2k)}):=\bigcap_{
				\substack{g\in \RR[x,y]_{\leq k},\\ 
							g(X,Y)=\mbf 0\;\text{in}\;M_k}} \mathcal Z(g).$$ 
		\item
			\label{290822-1554-odd}   
				$\beta$ can be extended to a sequence $\beta^{(2k+2)}$ such that $M_{k+1}$ is psd and has a column relation $Y=X^2$.
		\item
		\label{040722-pt5-odd} 
			The relations $\beta_{i,j+1}=\beta_{i+2,j}$ hold for every $i,j\in \ZZ_+$ with $i+j\leq 2k-1$,
			$A_{\gamma}\succeq 0$ and
			the sequence 	
				$\gamma$ 
			is positively recursively generated.
		\item
		\label{040722-pt6-odd} 
			The relations $\beta_{i,j+1}=\beta_{i+2,j}$ hold for every $i,j\in \ZZ_+$ with $i+j\leq 2k-1$ 
			and defining 
				$\beta_{i,2k-i}=\beta_{i\modulo 2,2k-i+\lfloor \frac{i}{2}\rfloor}$
			for $2\leq i\leq 2k$,
			the moment matrix $(M_k)|_{\cB\setminus\{y^k\}}$,
			where 			
				$\cB=\{1,x,y,yx,\ldots,y^{k-1},y^{k-1}x,y^k\}$,
			is positive semidefinite and 
			\begin{equation}
				\label{181122-2315}
				\begin{pmatrix}
					\beta_{0,k} & \beta_{1,k} & \beta_{0,k+1} & \beta_{1,k+1} & \cdots & \beta_{0,2k-1}
				\end{pmatrix}^T\in \cC((M_k)|_{\cB\setminus\{xy^{k-1},y^k\},\cB\setminus \{y^k\}}).
			\end{equation}
	\end{enumerate}
\end{theorem}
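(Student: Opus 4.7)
The plan is to adapt the univariate reduction technique from the proof of Theorem \ref{090622-1549}, accounting for the missing degree-$2k$ moments that prevent direct formation of $M_k$. After applying the alt from Claim 1 of the proof of Theorem \ref{240822-1317} to reduce to $q(x) = x^2$, the bijection $y^a x^b \leftrightarrow x^{2a+b}$ identifies $(M_k)|_{\cB \setminus \{y^k\}}$ with the Hankel matrix $A_\gamma$, and identifies the vector in \eqref{181122-2315} with the would-be column of $A_\gamma$ indexed by $x^{2k}$, restricted to rows $0, \ldots, 2k-2$. The extension $\beta_{i, 2k-i} = \beta_{i \bmod 2, 2k-i+\lfloor i/2 \rfloor}$ in \eqref{040722-pt6-odd} is precisely what is needed to make $(M_k)|_{\cB \setminus \{y^k\}}$ fully defined from $\beta^{(2k-1)}$.

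I would proceed as follows. The implication $(1) \Rightarrow (3)$ uses the standard necessary conditions (psd, rg, column relation, and the variety condition via \cite[Proposition 3.1 and Corollary 3.7]{CF96}). For $(3) \Rightarrow (4)$, the hypotheses applied to $\beta^{(2k)}$ are exactly those of condition (3) in the already-proved Theorem \ref{090622-1549}, so $\beta^{(2k)}$ has a $K$-rm; enlarging its moments to degree $2k+2$ produces $\beta^{(2k+2)}$ with $M_{k+1}$ psd satisfying $Y=X^2$. For $(4) \Rightarrow (5)$, the column relation $Y=X^2$ in the psd $M_{k+1}$ yields the relations $\beta_{i,j+1}=\beta_{i+2,j}$ throughout the required range, and the restriction of $M_{k+1}$ to the analog of $\cB \setminus \{y^k\}$ at level $k+1$ corresponds under the bijection to a psd Hankel extension of $A_\gamma$; using \cite[Theorem 3.14]{CF96} to pass to a rg extension if needed, this extended Hankel matrix has the same rank as its appropriate principal block, whence $\gamma$ is prg by \cite[Theorem 2.6]{CF91}. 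The block $(1) \Leftrightarrow (2) \Leftrightarrow (5)$ is then handled by the direct analog of Claim 5 of Theorem \ref{240822-1317}: $\beta$ has a $u$-atomic $K$-rm iff $\gamma$ has a $u$-atomic $\RR$-rm via the atom map $x_u \mapsto (x_u, x_u^2)$, and \cite[Theorem 3.9]{CF91} applied to the even-degree sequence $\gamma$ supplies the $(\Rank \gamma)$-atomic witness.

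The main obstacle will be $(5) \Leftrightarrow (6)$, which rephrases the prg condition on $\gamma$ as a column-range condition on the bivariate moment matrix restriction. For $(5) \Rightarrow (6)$, psd and prg of $\gamma$ yield by \cite[Theorem 3.9]{CF91} a psd Hankel extension of $A_\gamma$, whose extra column (under the bijection) furnishes the witness \eqref{181122-2315}. Conversely, \eqref{181122-2315} allows one to recursively define $\gamma_{4k-1}, \gamma_{4k}$ so that $A_{\gamma^{(0,4k)}}$ is psd of the same rank as $A_\gamma$, which by \cite[Theorem 2.6]{CF91} forces $\gamma$ to be prg. The delicate point is verifying that the single column-in-range condition captures precisely the recursion determined by $A_\gamma(r-1)$ for $r = \Rank \gamma$, requiring a case split on whether $A_\gamma$ is positive definite or rank-deficient, with the degenerate case following from the persistence of the Hankel recursion under psd extensions.
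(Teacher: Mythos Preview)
Your plan is correct and follows the same univariate-reduction strategy as the paper; the differences are in packaging rather than substance. The paper compresses the circle $(1)\Leftrightarrow(3)\Leftrightarrow(4)$ into a single appeal to the even-degree result (Theorem~\ref{090622-1549}) rather than tracing your chain $(1)\Rightarrow(3)\Rightarrow(4)\Rightarrow(5)$, which avoids the minor delicacy in your $(4)\Rightarrow(5)$ step (the single column relation $Y=X^2$ in $M_{k+1}$ gives the moment identities only along rows of degree $\le k+1$; one still has to pass through the recursive generation of $M_k$ via \cite[Theorem 3.14]{CF96} to get all of them, as you note). More significantly, for $(5)\Leftrightarrow(6)$ the paper simply observes $(M_k)|_{\cB\setminus\{y^k\}}=A_\gamma$ and invokes \cite[Theorem 2.7.5]{BW11}, which asserts exactly that for $A_\gamma\succeq 0$ the prg condition is equivalent to the column-range condition \eqref{181122-2315}; this replaces your flat-extension construction and case split with a one-line citation. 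Your hands-on argument is essentially a reproof of that theorem and is fine, but the reference is the cleaner route. Finally, the alt from Claim~1 of Theorem~\ref{240822-1317} is not needed here: the statement is already for $y=x^2$.
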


\begin{proof}
	The equivalences $\eqref{040722-pt1-odd}\Leftrightarrow \eqref{040722-pt3-odd}\Leftrightarrow \eqref{290822-1554-odd}$
	follow by Theorem \ref{090622-1549}.
	By \cite{Ric57}, $\eqref{040722-pt1-odd}$ is equivalent to: 
	\begin{enumerate}[(1')]
		\item
		 	\label{040722-pt1-special-case-odd}
				$\beta$ has a $s$--atomic $K$--rm for some $s\in \NN$.
	\end{enumerate}
	Claim 5 of Theorem \ref{240822-1317} holds with the same proof also for $q(x)=x^2$ and odd degree sequence (i.e., $i+j\leq 2k-1$).
	Together with \cite[Theorem 3.9]{CF91}, the equivalences $\ref{040722-pt1-special-case-odd}\Leftrightarrow \eqref{040722-pt2-odd} \Leftrightarrow \eqref{040722-pt5-odd}$
	follow.
	Note that $(M_k)|_{\cB\setminus\{y^k\}}=A_{\gamma}$. 
	By \cite[Theorem 2.7.5]{BW11}, $\gamma$ is prg if and only if \eqref{181122-2315} holds.
	This establishes the equivalence $\eqref{040722-pt5-odd}\Leftrightarrow \eqref{040722-pt6-odd}$.
\end{proof}

\begin{remark}
	\begin{enumerate}
		\item 
			Note that $\Rank\gamma$ in Theorem \ref{090622-1549-odd} is at most $2k$ and it is $2k$ iff $A_\gamma$ is positive definite.
		\item
			Theorem \ref{090622-1549-odd} also solves the odd degree TMP on any curve	
			of the form $y=q_2x^2+q_1x+q_0\},$ where $q_0,q_1,q_2\in \RR$ and $q_2\neq 0$. 
			As in the proof of the even degree case one applies $\phi$ from the proof of Theorem \ref{090622-1549} to 
			$\beta$ to come into the case $y=x^2$ and then use Theorem \ref{090622-1549-odd}.
	\end{enumerate}
\end{remark}
 
\subsection{A solution to the TMP based on a feasibility of a linear matrix inequality}
\label{251122-2111}

In this subsection we give another alternative solution to the TMP on curves $y=q(x),$ where $q(x)\in\RR[x]$ and $\deg q\geq 3$, which is based on a
feasibility of a linear matrix inequality associated to the univariate sequence $\gamma$, obtained from 
the original sequence $\beta$ as in the proofs of the results of previous subsections. The feasibility question appears as a result of the fact 
that $\gamma$ is not fully determined by $\beta$, but $\beta$ admits a $K$--rm if and only if
$\gamma$ can be completed to a sequence admitting a $\RR$--rm. 

For $n\in \NN$, we denote by $[n]$ the set of all nonnegative integers smaller or equal to $n$.
Let the sets $N_1,N_2$ form a partition of $[n]$, i.e., $N_1,N_2\subseteq [n]$, $N_1\cup N_2=[n]$ and $N_1\cap N_2=\emptyset.$
Let $\Gamma_1:=(\gamma_t)_{t\in N_1}$  be a sequence of real numbers indexed by integers from $N_1$
and $\boldsymbol\Gamma_2:=(\boldsymbol\gamma_t)_{t\in N_2}$ a tuple of variables indexed by integers from $N_2$. 
Let 
\begin{equation}
	\label{230305-1808}
		F_{\Gamma_1}(\boldsymbol\Gamma_2):\RR^{|N_2|}\to \RR^{|N_1|+|N_2|}
\end{equation}
be a function with the output a sequence $(\widetilde \gamma_t)_{t\in [n]}$
where
$\widetilde \gamma_t=
\left\{
\begin{array}{rr}
\gamma_t,&	
	\text{if } t\in N_1,\\
\boldsymbol\gamma_t,&
	\text{if } t\in N_2.
\end{array}
\right.$

In Theorem \ref{121122-1341} below the set $N_1$ will be the set of indices, for which the corresponding univariate sequence $\gamma$
is determined by $\beta$, while the indices of the non-determined part will belong to $N_2$. Since we can either get an odd or an even sequence, for which the solutions to the $\RR$--TMP are slightly different, we separate two cases for $N_1\cup N_2=[n]$ (see \eqref{110323-1605}).

\begin{theorem}
	\label{121122-1341}
	Let $K:=\{(x,y)\in \RR^2\colon y=q(x)\}$, where $q(x)=\sum_{i=0}^\ell q_ix^i\in \RR[x]$, $\ell\geq 3$, $q_\ell\neq 0$, 
	and 
		$$\beta:=\beta^{(d)}=(\beta_{i,j})_{i,j\in \ZZ_+,i+j\leq d},$$ 
	where $\big\lceil \frac{d}{2}\big\rceil\geq \deg q$.
	For $i,j,s\in\ZZ_+$, such that $i+j\leq d$,
	define 	real numbers
	\begin{align*}	
	\displaystyle q_{i,j,s}
		&:=
			\left\{
			\begin{array}{rr}
				\displaystyle\sum_{\substack{0\leq i_1,\ldots,i_j\leq \ell,\\ i_1+\ldots+i_j=s-i}}
				q_{i_1}q_{i_2}\ldots q_{i_j},&		\text{if }i\leq s\leq i+j\ell,\\
				0,&		\text{otherwise}.
			\end{array}
			\right.
	\end{align*}
	Let
	\begin{equation*}
		N_1
		:=\Big\{t\in \ZZ_+\colon t\modulo \ell+\Big\lfloor{ \frac{t}{\ell}\Big\rfloor}\leq d\Big\},
	\end{equation*}
	\begin{equation}
		\label{121122-1732}
		\gamma_{t}
		=\frac{1}{(q_\ell)^{\lfloor \frac{t}{\ell}\rfloor}}\Big(\beta_{t\modulo \ell,\lfloor \frac{t}{\ell}\rfloor}-
		\sum_{s=0}^{t-1}q_{t\modulo \ell,\lfloor \frac{t}{\ell}\rfloor,s}\cdot \gamma_{s}\Big)\quad
		\text{for every }t\in N_1,
	\end{equation}
	and $\Gamma_1:=(\gamma_t)_{t\in N_1}$.
	Let
	\begin{equation}
	\label{110323-1605}
		[n]
		:=
		\left\{
		\begin{array}{rr}
		[d\ell+2],& \text{if }d\ell\text{ is even},\\[0.2em]
		[d\ell+1],& \text{if }d\ell \text{ is odd},
		\end{array}
		\right.
	\end{equation}
	$\boldsymbol\Gamma_2:=(\boldsymbol\gamma_t)_{t\in N_2}$ be a tuple of variables with $N_2=[n]\setminus N_1$
	and
	$F_{\Gamma_1}(\boldsymbol\Gamma_2)$ be defined as in \eqref{230305-1808}.
	Then the following statements are equivalent:
	\begin{enumerate}
		\item
		 	\label{121122-1341-pt1}
				$\beta$ has a $K$--representing measure.
		\item
		 	\label{121122-1341-pt3}
				$\beta_{i,j}=\sum_{p=0}^{\ell}q_p\beta_{i+p,j-1}$ for every $i,j\in \ZZ_+$, such that $i+j\leq d-\ell+1$
				and
				there exists a tuple
				 	$\Gamma_2=(\gamma_t)_{t\in N_2}\in \RR^{N_2}$
				such that
					$A_{F_{\Gamma_1}(\Gamma_2)}\succeq 0$.
	\end{enumerate}
\end{theorem}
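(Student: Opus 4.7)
I plan to prove Theorem~\ref{121122-1341} as the feasibility reformulation of the reduction-to-univariate argument developed in Theorems~\ref{240822-1317} and~\ref{240822-1317-odd}. The bridge is the univariate sequence $\gamma=F_{\Gamma_1}(\Gamma_2)$: on $N_1$ it is forced by \eqref{121122-1732}, which by Claim~4 of Theorem~\ref{240822-1317} (the identity \eqref{111222-2258}) amounts to $\beta_{i,j}=\sum_s q_{i,j,s}\gamma_s$ for every $(i,j)$ with $i+j\leq d$; on $N_2$ the entries are free, supplying exactly the extra moments past the natural length $d\ell+1$ so that the univariate truncated Hamburger moment problem for $(\gamma_0,\ldots,\gamma_{d\ell})$ collapses to psd feasibility of $A_\gamma$.

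\textbf{Key steps.}
For $(1)\Rightarrow(3)$, starting from a $K$-rm $\mu$ for $\beta$, I would integrate $x^iy^{j-1}(y-q(x))$, which vanishes on $K$, against $\mu$ to derive $\beta_{i,j}=\sum_{p=0}^{\ell}q_p\beta_{i+p,j-1}$ for $i+j\leq d-\ell+1$, $j\geq 1$; then let $\nu$ be the pushforward of $\mu$ along the bijection $(x,q(x))\leftrightarrow x$ and set $\gamma_t:=\int x^t\,d\nu$ for all $t\in[n]$. The matrix $A_{F_{\Gamma_1}(\Gamma_2)}$ is then a moment matrix of the positive Borel measure $\nu$ (hence psd), and \eqref{240822-1441} gives $\beta_{i,j}=\int x^iq(x)^j\,d\nu=\sum_s q_{i,j,s}\gamma_s$, which is \eqref{121122-1732} on $N_1$. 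For the converse $(3)\Rightarrow(1)$, let $\gamma=F_{\Gamma_1}(\Gamma_2)$ with $A_\gamma\succeq 0$. The solution of the truncated univariate Hamburger moment problem produces a $\RR$-rm $\nu$ for $(\gamma_0,\ldots,\gamma_{d\ell})$; the measure $\mu:=(\mathrm{id},q)_*\nu$ is supported on $K$, and for every $(i,j)$ with $i+j\leq d$,
$$\int x^iy^j\,d\mu=\int x^iq(x)^j\,d\nu=\sum_sq_{i,j,s}\gamma_s=\beta_{i,j},$$
where the last equality is an inductive re-derivation of \eqref{111222-2258} using only the assumed linear relations, exactly as in Claim~4 of Theorem~\ref{240822-1317}.

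\textbf{Main obstacle.}
The delicate point is that $A_\gamma\succeq 0$ alone does not in general guarantee a $\RR$-rm for a univariate sequence (a psd Hankel matrix may fail to be positively recursively generated; e.g.\ the sequence $(1,0,0,0,1)$ has a psd Hankel matrix but no rm). The precise choice of $n$ is what rescues the argument: when $d\ell$ is even, $n=d\ell+2$ matches the flat-extension criterion under which $(\gamma_0,\ldots,\gamma_{d\ell})$ admits a rm if and only if it extends by two moments to a psd Hankel matrix; when $d\ell$ is odd, $n=d\ell+1$ matches the corresponding odd-length criterion. A secondary point is that the inductive re-derivation of \eqref{111222-2258} in $(3)\Rightarrow(1)$ uses only the assumed explicit linear relations (rather than full recursive generation of a bivariate moment matrix as in Claim~4 of Theorem~\ref{240822-1317}); the induction step at $(i,j)$ with $i\geq\ell$ invokes $\beta_{i-\ell,j+1}=\sum_p q_p\beta_{i-\ell+p,j}$, which is valid exactly when $(i-\ell)+(j+1)\leq d-\ell+1$, i.e., $i+j\leq d$, covering the full range of $\beta$.
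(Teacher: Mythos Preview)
Your proposal is correct and follows essentially the same route as the paper's proof: both reduce the bivariate $K$--TMP to the univariate Hamburger problem via the sequence $\gamma$, and both hinge on the fact that the choice of $n$ (adding two extra moments when $d\ell$ is even, one when $d\ell$ is odd) turns ``$A_\gamma\succeq 0$ for some completion'' into a genuine equivalent of ``$(\gamma_0,\ldots,\gamma_{d\ell})$ has a $\RR$--rm'', citing \cite[Theorems~3.1 and~3.9]{CF91}. The paper phrases this by pointing back to the proofs of Theorems~\ref{240822-1317} and~\ref{240822-1317-odd} (in particular Claim~5 there), whereas you spell out the two directions directly via pushforwards $(\mathrm{id},q)_*\nu$ and $x$--projection; you also make explicit the point that the inductive identity \eqref{111222-2258} only needs the assumed linear relations $\beta_{i,j}=\sum_p q_p\beta_{i+p,j-1}$ for $i+j\leq d-\ell+1$ (not full recursive generation of an extended moment matrix), which the paper leaves implicit. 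One cosmetic slip: the theorem has parts \eqref{121122-1341-pt1} and \eqref{121122-1341-pt3}, which you refer to as ``(1)'' and ``(3)''.
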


\begin{proof}
Observing the proof of Theorem \ref{240822-1317} for a general $q(x)$
one can notice that 
		$F_{\Gamma_1}(\boldsymbol\Gamma_2)$
corresponds to the sequence $\widetilde\gamma$.
The original sequence $\beta$ determines only $\gamma_t$
for $t\in N_1$ by \eqref{121122-1732}, while for $t\in N_2$, $\boldsymbol\gamma_t$ are variables. By the proof of Theorem \ref{240822-1317},
$\beta$ will have a $K$--rm iff it satisfies the rg relations coming from the column relation $Y=q(X)$ and there exists 
$\widetilde \gamma$ such that $A_{\widetilde \gamma}\succeq 0$.
This proves Theorem \ref{121122-1341} for even $d$.

Observing the proof of Theorem \ref{240822-1317-odd} in case $d$ is odd one can notice that only 
	$\gamma^{(0,d\ell)}=(\gamma_0,\ldots,\gamma_{d\ell})$ 
needs to have a $\RR$--rm to obtain a $K$--rm for $\beta$. In case $d\ell$ is even, this is 
by \cite[Theorem 3.9]{CF91} equivalent to $A_{\gamma^{(0,d\ell+2)}}\succeq 0$, where 
	$\gamma^{(0,d\ell+2)}=(\gamma_0,\ldots,\gamma_{d\ell},\gamma_{d\ell+1},\gamma_{d\ell+2})$
for some  $\gamma_{d\ell+1}$, $\gamma_{d\ell+2}$.
Since $\gamma^{(0,d\ell+2)}$ corresponds to the sequence
		$F_{\Gamma_1}(\Gamma_2),$
this proves  Theorem \ref{121122-1341} for even $d\ell$ with $d$ being odd. If $d\ell$ is odd, then by \cite[Theorem 3.1]{CF91} 
it suffices that there is $\gamma_{d\ell+1}$
such that $A_{\gamma^{(0,d\ell+1)}}\succeq 0$, where $\gamma^{(0,d\ell+1)}=(\gamma_0,\ldots,\gamma_{d\ell},\gamma_{d\ell+1})$, 
and this proves Theorem \ref{121122-1341} for odd $d\ell$.
\end{proof}

We will present the statement of Theorem \ref{121122-1341} on a few examples.
The following example is for the case $\deg q=3$ and a sequence $\beta$ of even degree.

\begin{example}
	\label{121122-1941}
		Let $\beta=(\beta_{i,j})_{i,j\in \ZZ_+,i+j\leq 2k}$
	be a bivariate sequence of degree $2k$, $k\geq 3$,
	and $K:=\{(x,y)\in \RR^2\colon y=x^3\}$.
	For the existence of a $K$--rm $\beta$ must satisfy the relations $\beta_{i,j}=\beta_{i+3,j-1}$
	for every $i,j\in \ZZ_+$ such that $i+j+2\leq 2k$.
	In the notation of Theorem \ref{121122-1341} we have
	\begin{align*}
	q_{i,j,s}
		&:=
		\left\{
		\begin{array}{rr}
			1,&	\text{if }s=i+3j,\\
			0,& \text{otherwise},
		\end{array}	
		\right.\; \text{for }\; i,j,s\in \ZZ_+,\text{ such that }i+j\leq 2k,\\
	N_1
		&:=\left\{t\in \ZZ_+\colon t\modulo 3 + \Big\lfloor{ \frac{t}{3}\Big\rfloor} \leq 2k\right\}
		=\left\{t\in \ZZ_+\colon t\leq 6k,\; t\neq 6k-1\right\},\\
	[n]&:=[6k+2]
	\quad\text{and}\quad
	N_2
		:=\{6k-1,6k+1,6k+2\}.
	\end{align*}
	\noindent 
	The formula \eqref{121122-1732} is equal to 
		$$\gamma_t=\beta_{t\modulo t,\lfloor \frac{t}{3} \rfloor}\quad \text{for every }t\in N_1$$
	and
	the function $F_{\Gamma_1}:\RR^{3}\to \RR^{6k+3}$ is defined by
	\begin{equation*}
	F_{\Gamma_1}(\boldsymbol\Gamma_2)
	=
	F_{\Gamma_1}(\boldsymbol\gamma_{\mathbf{6k-1}},\boldsymbol\gamma_{\mathbf{6k+1}},\boldsymbol\gamma_{\mathbf{6k+2}})
		:=(\gamma_0,\gamma_1,\ldots,\gamma_{6k-2},\boldsymbol\gamma_{\mathbf{6k-1}},\gamma_{6k},\boldsymbol\gamma_{\mathbf{6k+1}},
			\boldsymbol\gamma_{\mathbf{6k+2}}).
	\end{equation*}
	The matrix 
	$A_{F_{\Gamma_1}(\boldsymbol\Gamma_2)}$ 
	is equal to
		\begin{equation*}\label{vector-v}
						\left(\begin{array}{cccccccc|c}
						\gamma_0 & \gamma_1 & \gamma_2 & \gamma_3 &\cdots & \cdots&&\gamma_{3k}&\gamma_{3k+1} \\
						\gamma_1 & \gamma_2 & \gamma_3 &\iddots&  &&&&\vdots\\
						\gamma_2 & \gamma_3 &\iddots &&&&&& \gamma_{6k-2}\\
						\gamma_3 & \iddots&&	&&&&\gamma_{6k-2}&\boldsymbol\gamma_{\mathbf{6k-1}}\\
						\vdots &&	 && &&\gamma_{6k-2} &\boldsymbol\gamma_{\mathbf{6k-1}} &\gamma_{6k}\\
						\gamma_{3k}&\cdots&&\cdots &\cdots &\gamma_{6k-2}&  \boldsymbol\gamma_{\mathbf{6k-1}}&\gamma_{6k}
& \boldsymbol\gamma_{\mathbf{6k+1}}\\\hline
						\gamma_{3k+1}&\cdots&\cdots&\cdots &\gamma_{6k-2} &\boldsymbol\gamma_{\mathbf{6k-1}}&  \gamma_{6k}&\boldsymbol\gamma_{\mathbf{6k+1}}
& \boldsymbol\gamma_{\mathbf{6k+2}}
						\end{array}\right),
	\end{equation*}
	The question of feasibility of $A_{F_{\Gamma_1}(\boldsymbol\Gamma_2)}\succeq 0$ can be answered analytically, 
	since the structure of the missing entries is simple enough. Actually it is even easier to work with 
	$A_{(\gamma_0,\ldots,\boldsymbol\gamma_{\mathbf{6k-1}},\gamma_{6k})}$
	and answer the feasibility question together with the condition from the solution of \cite[Theorem 3.9]{CF91} (see \cite[Theorem 3.1]{Zal21}).
\end{example}

\begin{remark}
\label{201122-2159}
 If $\deg q=3$ in Theorem \ref{240822-1317}, then a polynomial is of the form $y=q_3x^3+q_2x^2+q_1x+q_0\in \RR[x]$, where $q_3\neq 0$,
and using alt\textit{s}
it can be transformed to $y=x^3.$ Indeed, 
by first applying an alt as at the beginning of the proof of Claim 1 of Theorem \ref{240822-1317}, we can assume that $q_2=0$,
i.e., the polynomial becomes $y=q_3x^3+q_1x+q_0$. 
Now we apply an alt
$(x,y)\mapsto \big(x, y-q_1x-q_0\big)$,
followed by
$(x,y)\mapsto \big(\sqrt[3]{q_3}x,y\big)$
and get a polynomial $y=x^3$.
\end{remark}

The following example demonstrates the statement of Theorem \ref{121122-1341} for the case $\deg q=3$ and a sequence $\beta$ of odd degree.

\begin{example}
	\label{121122-1941-odd}
		Let $\beta=(\beta_{i,j})_{i,j\in \ZZ_+,i+j\leq 2k-1}$
	be a bivariate sequence of degree $2k-1$, $k\geq 3$,
	and $K:=\{(x,y)\in \RR^2\colon y=x^3\}$.
	For the existence of a $K$--rm $\beta$ must satisfy the relations $\beta_{i,j}=\beta_{i+3,j-1}$
	for every $i,j\in \ZZ_+$ such that $i+j+2\leq 2k-1$.
	In the notation of Theorem \ref{121122-1341}, we have
	\begin{align*}
		q_{i,j,s}
		&:=
		\left\{
		\begin{array}{rr}
			1,&	\text{if }s=i+3j,\\
			0,& \text{otherwise}.
		\end{array}	
		\right.\; \text{for }\; i,j,s\in \ZZ_+,\text{ such that }i+j\leq 2k-1,\\
		N_1
		&:=\left\{t\in \ZZ_+\colon t\modulo 3 + \Big\lfloor{ \frac{t}{3}\Big\rfloor} \leq 2k-1\right\}
		=\left\{t\in \ZZ_+\colon t\leq 6k-3 \text{ and } t\neq 6k-4\right\},\\
		[n]
		&=[6k-2]
		\quad\text{and}\quad
		N_2
		:=\{6k-4,6k-2\}.
	\end{align*}
	The formula \eqref{121122-1732} is equal to 
		$$\gamma_t=\beta_{t\modulo t,\lfloor \frac{t}{3} \rfloor}\quad \text{for every }t\in \NN,$$
	and the function $F_{\Gamma_1}:\RR^{2}\to \RR^{6k-1}$ is defined by
	\begin{equation*}
	F_{\Gamma_1}(\boldsymbol\Gamma_2)=
	F_{\Gamma_1}(\boldsymbol\gamma_{\mathbf{6k-4}},\boldsymbol\gamma_{\mathbf{6k-2}})
		:=(\gamma_0,\gamma_1,\ldots,\gamma_{6k-5},\boldsymbol\gamma_{\mathbf{6k-4}},\gamma_{6k-3},\boldsymbol\gamma_{\mathbf{6k-2}}).
	\end{equation*}
	Since $6k-3=(2k-1)\cdot 3$ is odd, only feasibility of the inequality
	$A_{F_{\Gamma_1}(\boldsymbol\Gamma_2)}\succeq 0$
	is important for the existence of the rm for $\beta$,
	where $A_{F_{\Gamma_1}(\boldsymbol\Gamma_2)}$ is equal to
		\begin{equation*}\label{vector-v}
						\left(\begin{array}{cccccccc}
						\gamma_0 & \gamma_1 & \gamma_2 & \gamma_3 &\cdots & \cdots&\gamma_{3k-1} \\
						\gamma_1 & \gamma_2 & \gamma_3 &\iddots&  &&\vdots\\
						\gamma_2 & \gamma_3 &\iddots &&&& \gamma_{6k-5}\\
						\gamma_3 & \iddots&&	&&\gamma_{6k-5}&\boldsymbol\gamma_{\mathbf{6k-4}}\\
						\vdots &&	 & &\gamma_{6k-5} &\boldsymbol\gamma_{\mathbf{6k-4}} &\gamma_{6k-3}\\
						\gamma_{3k-1}&\cdots &\cdots &\gamma_{6k-5}&  \boldsymbol\gamma_{\mathbf{6k-4}}&\gamma_{6k-3}
& \boldsymbol\gamma_{\mathbf{6k-2}}
						\end{array}\right).
	\end{equation*}
	This feasibility question 
	can be answered analytically, 
	since the structure of missing entries is simple enough. 
	See Theorem \ref{21122-0855-odd} below.
\end{example}

The following example demonstrates the statement of Theorem \ref{121122-1341} for the case $y=x^4$ and a sequence $\beta$ of even degree.

\begin{example}
	\label{121122-1804}
		Let $\beta=(\beta_{i,j})_{i,j\in \ZZ_+,i+j\leq 2k}$
	be a bivariate sequence of degree $2k$, $k\geq 4$,
	and $K:=\{(x,y)\in \RR^2\colon y=x^4\}$.
	For the existence of a $K$--rm $\beta$ must satisfy the relations $\beta_{i,j}=\beta_{i+4,j-1}$
	for every $i,j\in \ZZ_+$ such that $i+j+3\leq 2k$.
	In the notation of Theorem \ref{121122-1341}, we have
	\begin{align*}
		q_{i,j,s}
		&:=
		\left\{
		\begin{array}{rr}
			1,&	\text{if }s=i+4j,\\
			0,& \text{otherwise}.
		\end{array}	
		\right.\; \text{for }\; i,j,s\in \ZZ_+,\text{ such that }i+j\leq 2k,\\
		N_1
		&:=\left\{t\in \ZZ_+\colon t\modulo 4 + \Big\lfloor{ \frac{t}{4}\Big\rfloor} \leq 2k\right\}\\
		&=\left\{t\in \ZZ_+\colon t\leq 8k,\; t\notin \{8k-5,8k-2,8k-1\}\right\},\\[0.3em]
		[n]
		&:=[8k+2]
		\quad\text{and}\quad
		N_2
		:=\{8k-5,8k-2,8k-1,8k+1,8k+2\}.
	\end{align*}
	The formula \eqref{121122-1732} is equal to 
		$$\gamma_t=\beta_{t\modulo t,\big\lfloor \frac{t}{4} \big\rfloor}\quad \text{for every }t\in \NN,$$
	 the function $F_{\Gamma_1}:\RR^{5}\to \RR^{8k+3}$ is defined by
	\begin{align*}
	F_{\Gamma_1}(\boldsymbol\Gamma_2)
		&=
		F(\boldsymbol\gamma_{\mathbf{8k-5}},\boldsymbol\gamma_{\mathbf{8k-2}},\boldsymbol\gamma_{\mathbf{8k-1}},\boldsymbol\gamma_{\mathbf{8k+1}},\boldsymbol\gamma_{\mathbf{8k+2}})\\
		&:=(\gamma_0,\gamma_1,\ldots,\gamma_{8k-6},
			\boldsymbol\gamma_{\mathbf{8k-5}},\gamma_{8k-4},\gamma_{8k-3},
			\boldsymbol\gamma_{\mathbf{8k-2}},\boldsymbol\gamma_{\mathbf{8k-1}},\gamma_{8k},\boldsymbol\gamma_{\mathbf{8k+1}},\boldsymbol\gamma_{\mathbf{8k+2}}),
	\end{align*}
	and the matrix 
	$A_{F_{\Gamma_1}(\boldsymbol\Gamma_2)}$ 
	is equal to
		\begin{equation*}\label{vector-v}
						\left(\begin{array}{ccccccccc|c}
						\gamma_0 & \gamma_1 & \gamma_2 & \gamma_3 & \cdots&&&\cdots & \gamma_{4k} & \gamma_{4k+1}\\
						\gamma_1 & \gamma_2 & \gamma_3 &\iddots&&&&&  &\vdots\\
						\gamma_2 & \gamma_3 &\iddots &&&&& \gamma_{8k-6}&\boldsymbol\gamma_{\mathbf{8k-5}}&\gamma_{8k-4}\\
						\gamma_3 & \iddots&&	&&&\gamma_{8k-6}&\boldsymbol\gamma_{\mathbf{8k-5}}&\gamma_{8k-4}&\gamma_{8k-3}\\
						\vdots && &	 & &\gamma_{8k-6} & \boldsymbol\gamma_{\mathbf{8k-5}} &\gamma_{8k-4}&\gamma_{8k-3}& \boldsymbol\gamma_{\mathbf{8k-2}}\\
						&&&	&\gamma_{8k-6}& \boldsymbol\gamma_{\mathbf{8k-5}} &\gamma_{8k-4}& \gamma_{8k-3}& \boldsymbol\gamma_{\mathbf{8k-2}}
& \boldsymbol\gamma_{\mathbf{8k-1}}\\
					\vdots &&	&\gamma_{8k-6}	&\boldsymbol\gamma_{\mathbf{8k-5}} &	 \gamma_{8k-4}&\gamma_{8k-3} & \boldsymbol\gamma_{\mathbf{8k-2}}& \boldsymbol\gamma_{\mathbf{8k-1}} & \gamma_{8k}\\
\gamma_{4k}&  & \gamma_{8k-6} &\boldsymbol\gamma_{\mathbf{8k-5}} &\gamma_{8k-4} & \gamma_{8k-3} & \boldsymbol\gamma_{\mathbf{8k-2}} & \boldsymbol\gamma_{\mathbf{8k-1}} & \gamma_{8k} & \boldsymbol\gamma_{\mathbf{8k+1}} \\
\hline
\gamma_{4k+1}& \cdots & \boldsymbol\gamma_{\mathbf{8k-5}} &\gamma_{8k-4} & \gamma_{8k-3} & \boldsymbol\gamma_{\mathbf{8k-2}} & \boldsymbol\gamma_{\mathbf{8k-1}} & \gamma_{8k} & \boldsymbol\gamma_{\mathbf{8k+1}} & \boldsymbol\gamma_{\mathbf{8k+2}}
						\end{array}\right)
	\end{equation*}
	In contrast to the situation $y=x^3$ from Example 	
	\ref{121122-1941}, the structure of missing entries here is too complicated for the analytic approach and we believe the feasibility question can only be answered numerically 
	using linear matrix inequality solvers.
\end{example}

The following example demonstrates the statement of Theorem \ref{121122-1341} for the case $y=x^4$ and a sequence $\beta$ of odd degree.

\begin{example}
	\label{121122-1804-v2}
		Let $\beta=(\beta_{i,j})_{i,j\in \ZZ_+,i+j\leq 2k-1}$
	be a bivariate sequence of degree $2k-1$, $k\geq 4$,
	and $K:=\{(x,y)\in \RR^2\colon y=x^4\}$.
	For the existence of a $K$--rm $\beta$ must satisfy the relations $\beta_{i,j}=\beta_{i+4,j-1}$
	for every $i,j\in \ZZ_+$ such that $i+j+3\leq 2k-1$.
	In this case 
	\begin{align*}
		q_{i,j,s}
		&:=
		\left\{
		\begin{array}{rr}
			1,&	\text{if }s=i+4j,\\
			0,& \text{otherwise}.
		\end{array}	
		\right.\; \text{for }\; i,j,s\in \ZZ_+,\text{ such that }i+j\leq 2k-1,\\
		N_1
		&:=\left\{t\in \ZZ_+\colon t\modulo 4 + \Big\lfloor{ \frac{t}{4}\Big\rfloor} \leq 2k-1\right\}\\
		&=\left\{t\in \ZZ_+\colon t\leq 8k-4 \text{ and } t\notin \{8k-9,8k-6,8k-5\}\right\},\\[0.3em]
		n
		&:=[8k-2]
		\quad\text{and}\quad
		N_2
		:=\{8k-9,8k-6,8k-5,8k-3,8k-2\}.		
	\end{align*}
	The formula \eqref{121122-1732} is equal to 
		$$\gamma_t=\beta_{t\modulo t,\lfloor \frac{t}{4} \rfloor}\quad \text{for every }t\in \NN,$$
	the function $F_{\Gamma_1}:\RR^{5}\to \RR^{8k-1}$ is defined by
	\begin{align*}
		F_{\Gamma_1}(\boldsymbol\Gamma_2)
		&=
		F(\boldsymbol\gamma_{\mathbf{8k-9}},\boldsymbol\gamma_{\mathbf{8k-6}},\boldsymbol\gamma_{\mathbf{8k-5}},\boldsymbol\gamma_{\mathbf{8k-3}},\boldsymbol\gamma_{\mathbf{8k-2}})\\
		&:=(\gamma_0,\gamma_1,\ldots,\gamma_{8k-10},
			\boldsymbol\gamma_{\mathbf{8k-9}},\gamma_{8k-8},\gamma_{8k-7},
			\boldsymbol\gamma_{\mathbf{8k-6}},\boldsymbol\gamma_{\mathbf{8k-5}},\gamma_{8k-4},\boldsymbol\gamma_{\mathbf{8k-3}},\boldsymbol\gamma_{\mathbf{8k-2}}),
	\end{align*}
	and the matrix 
	$A_{F_{\Gamma_1}(\boldsymbol\Gamma_2)}$ 
	is equal to
		\begin{equation*}\label{vector-v}
						\left(\begin{array}{ccccccccc|c}
						\gamma_0 & \gamma_1 & \gamma_2 & \gamma_3 & \cdots&&&\cdots & \gamma_{4k-2} & \gamma_{4k-1}\\
						\gamma_1 & \gamma_2 & \gamma_3 &\iddots&&&&&  &\vdots\\
						\gamma_2 & \gamma_3 &\iddots &&&&& \gamma_{8k-10}&\boldsymbol\gamma_{\mathbf{8k-9}}&\gamma_{8k-8}\\
						\gamma_3 & \iddots&&	&&&\gamma_{8k-10}&\boldsymbol\gamma_{\mathbf{8k-9}}&\gamma_{8k-8}&\gamma_{8k-7}\\
						\vdots && &	 & &\gamma_{8k-10} & \boldsymbol\gamma_{\mathbf{8k-9}} &\gamma_{8k-8}&\gamma_{8k-7}& \boldsymbol\gamma_{\mathbf{8k-6}}\\
						&&&	&\gamma_{8k-10}& \boldsymbol\gamma_{\mathbf{8k-9}} &\gamma_{8k-8}& \gamma_{8k-7}& \boldsymbol\gamma_{\mathbf{8k-6}}
& \boldsymbol\gamma_{\mathbf{8k-5}}\\
					\vdots &&	&\gamma_{8k-10}	&\boldsymbol\gamma_{\mathbf{8k-9}} &	 \gamma_{8k-8}&\gamma_{8k-7} & \boldsymbol\gamma_{\mathbf{8k-6}}& \boldsymbol\gamma_{\mathbf{8k-5}} & \gamma_{8k-4}\\
\gamma_{4k-2}&  & \gamma_{8k-10} &\boldsymbol\gamma_{\mathbf{8k-9}} &\gamma_{8k-8} & \gamma_{8k-7} & \boldsymbol\gamma_{\mathbf{8k-6}} & \boldsymbol\gamma_{\mathbf{8k-5}} & \gamma_{8k-4} & \boldsymbol\gamma_{\mathbf{8k-3}} \\
\hline
\gamma_{4k-1}& \cdots & \boldsymbol\gamma_{\mathbf{8k-9}} &\gamma_{8k-8} & \gamma_{8k-7} & \boldsymbol\gamma_{\mathbf{8k-6}} & \boldsymbol\gamma_{\mathbf{8k-5}} & \gamma_{8k-4} & \boldsymbol\gamma_{\mathbf{8k-3}} & \boldsymbol\gamma_{\mathbf{8k-2}}
						\end{array}\right)
	\end{equation*}
	Since $8k-4=(2k-1)\cdot 4$ is even, the problem has the same structure as in the even degree case (see Example \ref{121122-1804}).
\end{example}

\subsection{A solution to the odd degree TMP on $y=x^3$}
\label{251122-2119} 

The following theorem is a concrete solution to the TMP of odd degree on the curve $y=x^3$, which can be solved using
the same technique as odd cases of the TMP on $y=x^\ell$, $\ell\geq 3$. However, for $\ell=3$ we get explicit conditions
for the existence of the solution, similarly as in the even degree case \cite[Theorem 3.1]{Zal21}.


\begin{theorem}[Solution to the TMP on $y=x^3$, odd case]
	\label{21122-0855-odd}  
	Let $K:=\{(x,y)\in \RR^2\colon y=x^3\}$ and 
	$\beta:=\beta^{(2k-1)}=(\beta_{i,j})_{i,j\in \ZZ_+,i+j\leq 2k-1}$, where $k\geq 3$. 
	Let
		$\gamma(\texttt{z}):=(\gamma_0,\gamma_1,\ldots,\gamma_{6k-5},\texttt{z},\gamma_{6k-3})$ 
	be a sequence,
			defined by
				$\gamma_t:=\beta_{t \modulo 3, \lfloor \frac{t}{3}\rfloor}$ for $t=0,1,\ldots,6k-5,6k-3$,
	and $\texttt{z}$ is a variable.
	The following statements are equivalent:
	\begin{enumerate}
		\item
			\label{211122-pt1-odd} 
				$\beta$ has a $K$--representing measure.
		\item
			\label{211122-pt2-odd} 
				$\beta$ has a $(\Rank M_{k-1})$--atomic or $(\Rank M_{k-1}+1)$--atomic $K$--representing measure.
		\item
		\label{211122-pt5-odd} 
			The relations $\beta_{i,j+1}=\beta_{i+3,j}$ hold for every $i,j\in \ZZ_+$ with $i+j\leq 2k-4$
			and 
			denoting 
				$\cB=\{1,x,x^2,y,yx,yx^2,\ldots,y^{k-1}\}$,
			one of the following holds:
			\begin{enumerate}
				\item 
					\label{211122-pt5-odd-a} 
					$(M_{k-1})|_{\cB}\succ 0$.	
					\smallskip

				\item 	
					\label{211122-pt5-odd-b} 
					$(M_{k-1})|_{\cB}\not\succ 0$, $(M_{k-1})|_{\cB}\succeq 0$, denoting
	 				$\gamma:=(\gamma_0,\gamma_1,\ldots,\gamma_{6k-6})$,
					$r:=\Rank \gamma$ and
				\begin{equation}
				\label{211222-1818}
				\left(\begin{array}{ccc}\varphi_0 & \cdots & \varphi_{r-1}\end{array}\right):=
						A_{\gamma}(r-1)^{-1}
						\left(\begin{array}{ccc}\gamma_r & \cdots &\gamma_{2r-1}\end{array}\right)^{T},
				\end{equation}
				it holds that	
				\begin{equation}
					\label{211122-1957}
						\gamma_{6k-u}=\sum_{i=0}^{r-1} \varphi_i \gamma_{6k-u-r+i}\quad \text{for }u=3,5,
				\end{equation}
				where $\gamma_{6k-4}$ is defined by \eqref{211122-1957} for $u=4$.
			\end{enumerate}
	\end{enumerate}
	Moreover, if a $K$--representing measure exists, then there does not exist a $(\Rank M_{k-1})$--atomic
	one if and only if $(M_{k-1})|_{\cB}\succ 0$ and $\gamma_{6k-3}$ does not satisfy \eqref{211122-1957} for $u=3$,
	where $\gamma_{6k-4}$ is obtained by \eqref{211122-1957} for $u=4$ and one uses \eqref{211222-1818} with $r=3k-2$.
\end{theorem}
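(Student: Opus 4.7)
The plan is to follow the univariate reduction strategy from the proofs of Theorems \ref{240822-1317-odd} and \ref{121122-1341}, specialized to $q(x)=x^3$. The rg consequences of the column relation $Y=X^3$ are exactly $\beta_{i,j+1}=\beta_{i+3,j}$ for $i+j\leq 2k-4$, and a direct index check on the definition $\gamma_t=\beta_{t\,\mathrm{mod}\,3,\lfloor t/3\rfloor}$ shows that in the range $0\leq t\leq 6k-3$, the only $\gamma_t$ not determined by $\beta$ is $\gamma_{6k-4}$, while $\gamma_{6k-5}=\beta_{1,2k-2}$ and $\gamma_{6k-3}=\beta_{0,2k-1}$ are known. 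Hence by Theorem \ref{121122-1341}, a $K$--rm for $\beta$ exists iff the relations hold and we can choose $\mathtt{z}$ (together with an auxiliary $\gamma_{6k-2}$) so that $(\gamma_0,\ldots,\gamma_{6k-2})$ admits an $\RR$--rm in the sense of \cite[Theorem 3.9]{CF91}, and in this case a $\RR$--rm for $\gamma$ of cardinality $u$ yields a $K$--rm for $\beta$ of the same cardinality via $x_j\mapsto (x_j,x_j^3)$.

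Next, I would establish that $(M_{k-1})|_{\cB}$ is (up to reindexing) the principal submatrix of the full Hankel matrix of $\gamma$ obtained by taking rows and columns indexed by $\{i+3j\colon y^jx^i\in\cB\}$. By an expansion analogous to Claim 6 of Theorem \ref{240822-1317}, this principal submatrix has the same rank and psd status as $A_\gamma:=A_{(\gamma_0,\ldots,\gamma_{6k-6})}$. This identifies Case \eqref{211122-pt5-odd-a} with $A_\gamma\succ 0$ and the psd singular hypothesis of Case \eqref{211122-pt5-odd-b} with $A_\gamma\succeq 0$ singular; in the latter case $r=\Rank A_\gamma=\Rank M_{k-1}$.

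The core analysis then splits. In Case \eqref{211122-pt5-odd-a}, $A_\gamma\succ 0$ is a pd principal submatrix of the target partially-defined Hankel matrix, so by a partial Hankel psd completion argument (cf.\ \cite[Lemma 2.11]{Zal21}) one can choose $\mathtt{z}$ and $\gamma_{6k-2}$ producing a psd Hankel matrix of rank at most $\Rank M_{k-1}+1$; \cite[Theorem 3.9]{CF91} then yields a rm with at most $\Rank M_{k-1}+1$ atoms, giving the $(\Rank M_{k-1}+1)$--atomic $K$--rm for $\beta$. In Case \eqref{211122-pt5-odd-b}, $A_\gamma$ is singular psd of rank $r$, so $\gamma$ is prg, and by \cite[Theorem 3.9]{CF91} it has a unique $r$--atomic $\RR$--rm whose higher moments are forced by the recursion \eqref{211222-1818}. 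Existence of a $K$--rm then reduces exactly to consistency of this recursion with the predetermined $\gamma_{6k-5}$ and $\gamma_{6k-3}$, which is \eqref{211122-1957} for $u=5$ and $u=3$; the free $\gamma_{6k-4}=\mathtt{z}$ is forced by the same recursion with $u=4$. This delivers a $(\Rank M_{k-1})$--atomic $K$--rm, proving $\eqref{211122-pt5-odd}\Rightarrow\eqref{211122-pt2-odd}$; the converses follow by reversing the correspondence and using that a $\RR$--rm of $\gamma$ forces $A_\gamma\succeq 0$ and prg.

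For the ``moreover'' part, the minimal number of atoms is precisely $r=\Rank A_\gamma=\Rank M_{k-1}$ in Case \eqref{211122-pt5-odd-b}, so the only way the $\Rank M_{k-1}$--atomic rm fails is in Case \eqref{211122-pt5-odd-a}, i.e.\ when $(M_{k-1})|_{\cB}\succ 0$: here a $(3k-2)$--atomic rm exists iff one can complete $A_\gamma$ to a psd Hankel of rank exactly $3k-2$, which amounts to the known $\gamma_{6k-3}$ satisfying the order-$(3k-2)$ recursion \eqref{211122-1957} for $u=3$ (with $\gamma_{6k-4}$ and $\gamma_{6k-2}$ determined by \eqref{211222-1818} with $r=3k-2$). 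The main obstacle I anticipate is the careful bookkeeping in identifying $(M_{k-1})|_{\cB}$ with the appropriate principal submatrix of $A_\gamma$ and in executing the partial Hankel completion of Case \eqref{211122-pt5-odd-a} in a way that makes the rank bound $\Rank M_{k-1}+1$ tight while simultaneously providing the refined criterion needed for the moreover statement.
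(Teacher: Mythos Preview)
Your approach is correct and matches the paper's proof in all essential respects: reduce via Theorem \ref{121122-1341} to the feasibility of the Hankel LMI in the free entries $\gamma_{6k-4},\gamma_{6k-2}$, identify $(M_{k-1})|_{\cB}$ with $A_\gamma$, split on whether $A_\gamma$ is pd or psd singular, and read off the atom count and the ``moreover'' clause from \cite[Theorems 3.1, 3.8, 3.9]{CF91}.

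Two small simplifications relative to your outline. First, for $q(x)=x^3$ the change-of-basis matrix $P$ from Claims~3 and~6 of Theorem \ref{240822-1317} is the identity, so the identification $(M_{k-1})|_{\cB}=A_\gamma$ is immediate once one notes that $y^jx^i\mapsto i+3j$ bijects $\cB$ onto $\{0,\ldots,3k-3\}$; the ``careful bookkeeping'' you anticipate is not needed. Second, in Case \eqref{211122-pt5-odd-a} the paper avoids \cite[Lemma 2.11]{Zal21} altogether: since $A_\gamma\succ 0$, one simply picks any $\gamma_{6k-4}$ keeping $A_{(\gamma_0,\ldots,\gamma_{6k-4})}\succ 0$ and then invokes the odd-degree result \cite[Theorem 3.1]{CF91} directly on $(\gamma_0,\ldots,\gamma_{6k-3})$ to get a $(3k-1)$--atomic rm; this is cleaner than a two-variable partial completion and immediately gives the sharp rank bound $\Rank M_{k-1}+1=3k-1$ as well as the dichotomy needed for the ``moreover'' statement.
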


\begin{proof}
	By Theorem \ref{121122-1341}, 
	$\eqref{211122-pt1-odd}$ is equivalent to the validity of the relations $\beta_{i,j+1}=\beta_{i+3,j}$ for every $i,j\in \ZZ_+$ with $i+j\leq 2k-4$
	and feasibility of $A_{F_{\Gamma_1}(\boldsymbol\gamma_{\mathbf{6k-4}},\boldsymbol\gamma_{\mathbf{6k-2}})}\succeq 0$,
	where the linear matrix function $A_{F_{\Gamma_1}(\boldsymbol\gamma_{\mathbf{6k-4}},\boldsymbol\gamma_{\mathbf{6k-2}})}$ is as in Example \ref{121122-1941-odd}.
	The latter is further equivalent to the existence of $\gamma_{6k-4}$ and $\gamma_{6k-2}$ such that
	$F_{\Gamma_1}(\gamma_{6k-4},\gamma_{6k-2})$ has a $\RR$--rm. Here we note that if 
	$A_{F_{\Gamma_1}(\gamma_{6k-4},\gamma_{6k-2})}\succeq 0$ is such that $\Rank F_{\Gamma_1}(\gamma_{6k-4},\gamma_{6k-2})<A_{F_{\Gamma_1}(\gamma_{6k-4},\gamma_{6k-2})}$,
	then 
	$\Rank F_{\Gamma_1}(\gamma_{6k-4},\gamma_{6k-2})=A_{F_{\Gamma_1}(\gamma_{6k-4},\gamma_{6k-2})}-1$
	by \cite[Corollary 2.5]{CF91}. Since $\gamma_{6k-2}$ occurs only in the bottom right corner of $A_{F_{\Gamma_1}(\gamma_{6k-4},\gamma_{6k-2})}$, we can replace it with $\widetilde\gamma_{6k-2}$ such that
	$A_{F_{\Gamma_1}(\gamma_{6k-4},\widetilde\gamma_{6k-2})}\succeq 0$ and $\Rank F_{\Gamma_1}(\gamma_{6k-4},\widetilde\gamma_{6k-2})=A_{F_{\Gamma_1}(\gamma_{6k-4},\widetilde\gamma_{6k-2})}$,
	which by \cite[Theorem 3.9]{CF91} indeed implies the existence of a $\RR$--rm.
	We have that
		$(M_{k-1})|_{\cB}=A_{\gamma}$.
	If $(M_{k-1})|_{\cB}\succ 0$, there exists $\gamma_{6k-4}$ such that
	$A_{(\gamma_0,\gamma_1,\ldots,\gamma_{6k-4})}\succ 0$ and by \cite[Theorem 3.1]{CF91}, 
	the sequence $(\gamma_0,\gamma_1,\ldots,\gamma_{6k-3})$ has a $(3k-1)$--atomic $\RR$--representing measure.
	Hence, one also finds $\gamma_{6k-2}$ such that $F_{\Gamma_1}(\gamma_{6k-4},\gamma_{6k-2})$ has a $\RR$--rm.
	If $(M_{k-1})|_{\cB}\succeq 0$ and $(M_{k-1})|_{\cB}\not \succ 0$, then by \cite[Theorem 3.8]{CF91}, 
	$(\gamma_0,\gamma_1,\ldots,\gamma_{6k-5})$ has a unique $\RR$--rm.
	This measure also represents $\gamma_{6k-3}$ iff \eqref{211122-1957} for $u=4$ and $u=3$ holds.
	This establishes the equivalence $\eqref{211122-pt1-odd}\Leftrightarrow \eqref{211122-pt5-odd}$.
	The equivalence of both with $\eqref{211122-pt2-odd}$ follows by observing that $F_{\Gamma_1}(\gamma_{6k-4},\gamma_{6k-2})$
	admits a $(\Rank \gamma)$--atomic or $(\Rank \gamma+1)$--atomic $\RR$--rm.
	The first case happens iff $\eqref{211122-pt5-odd-b}$ holds or $\eqref{211122-pt5-odd-a}$ holds and 
	$\gamma_{6k-4}$ is obtained by \eqref{211122-1957} for $u=4$, where one uses \eqref{211222-1818} with $r=3k-2$,
	and $\gamma_{6k-3}$ is obtained by \eqref{211122-1957} for $u=3$.
	Since $\Rank A_\gamma=\Rank M_{k-1}$, the equivalence follows.
\end{proof}

\section{The TMP on the curves $yx^\ell=1$}
\label{251122-2158}

In this section we study the $K$--TMP for $K$ being a curve of the form $yx^\ell=1$, $\ell\in \NN$, $\ell\geq 2$.
In Subsection \ref{251122-2138} we first give a solution to the $K$--TMP, 
based on the size of positive semidefinite extensions of the moment matrix needed and also bound the number of atoms in
the $K$--rm with the smallest number of atoms (see Theorem \ref{280822-2053} for the even degree and 
Theorem \ref{151122-1348} for the odd degree sequences). 
As a result we obtain a sum-of-squares representation for polynomials, which are strictly positive on $K$ (see Corollary \ref{290822-1052}).
This improves bounds in the previously known result \cite[Proposition 6.4]{Fia11}.
In Subsection \ref{251122-2140} we give a solution to the $K$--TMP, based on a feasibility of the corresponding linear matrix inequality 
(see Theorem \ref{14122-1729}).

\subsection{Solution to the TMP in terms of psd extensions of $M_k$, bounds on the number of atoms in the minimal measure and a Positivstellensatz}
\label{251122-2138}

\begin{theorem}[Even case]
	\label{280822-2053} 
	Let $K:=\{(x,y)\in \RR^2\colon yx^\ell=1\}$, where $\ell\in \NN\setminus\{1\}$,
	and 
		$\beta:=\beta^{(2k)}=(\beta_{i,j})_{i,j\in \ZZ_+,i+j\leq 2k}$, where $k\geq \ell+1$.
	The following statements are equivalent:
	\begin{enumerate}
		\item
		 	\label{280822-2053-pt1}
				$\beta$ has a $K$--representing measure.
		\item
		 	\label{280822-2053-pt4}
				$\beta$ has a $s$--atomic $K$--representing measure for some $s$ satisfying 
 					$$\Rank M_k\leq s\leq k(\ell+1).$$
		\item 
		 	\label{280822-2053-pt2}
				$M_k$ satisfies $Y=X^\ell$ 	
				and 
				admits a positive semidefinite, 
				recursively generated extension $M_{k+\ell}$.
		\item
		 	\label{280822-2053-pt3}
				$M_k$ satisfies $Y=X^\ell$ 
				and
				admits a positive semidefinite extension $M_{k+\ell+1}$.
	\end{enumerate}
\end{theorem}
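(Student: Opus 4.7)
The proof parallels that of Theorem \ref{240822-1317}, with the univariate Hamburger moment problem replaced by the strong truncated Hamburger moment problem (STHMP) on $\RR\setminus\{0\}$. The trivial implications $\eqref{280822-2053-pt1}\Rightarrow\eqref{280822-2053-pt3}$ and $\eqref{280822-2053-pt4}\Rightarrow\eqref{280822-2053-pt1}$ require no work, and $\eqref{280822-2053-pt3}\Rightarrow\eqref{280822-2053-pt2}$ follows from \cite[Theorem 3.14]{CF96}. The core task is $\eqref{280822-2053-pt2}\Rightarrow\eqref{280822-2053-pt4}$.

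Assuming a psd, rg extension $M_{k+\ell}$ of $M_k$ carrying the column relation corresponding to the curve $yx^\ell=1$, recursive generation and iteration yield the identity $\beta_{i+\ell,j+1}=\beta_{i,j}$, whence every $\beta_{i,j}$ with $i+j\leq 2(k+\ell)$ depends only on the integer $s:=i-j\ell$. I would therefore define the Laurent univariate sequence
\[
 \gamma:=(\gamma_{-2k\ell},\gamma_{-2k\ell+1},\ldots,\gamma_{2k}),\qquad \gamma_s:=\beta_{i,j}\text{ for any }(i,j)\text{ with }i-j\ell=s.
\]
The setup is cleaner than in Theorem \ref{240822-1317} because the Laurent parametrization $x\mapsto(x,x^{-\ell})$ of $K$ gives $x^iy^j=x^s$ on $K$ with no polynomial arithmetic required. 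A direct analogue of Claim 5 in the proof of Theorem \ref{240822-1317} would then yield the equivalence: $\beta$ admits a $u$-atomic $K$-rm if and only if $\gamma$ admits a $u$-atomic $(\RR\setminus\{0\})$-rm, via the bijective correspondence $x_p\leftrightarrow(x_p,x_p^{-\ell})$.

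Next, I would identify a principal submatrix of $M_{k+\ell}$ indexed by a spanning set $\cB$ of columns of the shape $\{Y^jX^i\colon 0\leq j\leq k,\;0\leq i\leq \ell-1\}\cup\{X^i\colon \ell\leq i\leq k+1\}$ (or a similar family chosen so that the associated Laurent degrees $s=i-j\ell$ form a contiguous block containing $\{-k\ell,\ldots,k\}$) with a Hankel matrix $PA_{\widetilde\gamma}P^T$, where $\widetilde\gamma$ is a two-step extension of $\gamma$ and $P$ is an invertible, essentially permutation-and-scaling matrix obtained from the parametrization. Positive semidefiniteness of $M_{k+\ell}$ then transfers to $A_{\widetilde\gamma}$. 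Applying the STHMP solution recorded in the preliminaries (the adaptation of Algorithm \ref{alg-090323-1157} for $(\RR,-2k\ell,2k)$) yields an $s$-atomic $K$-rm for $\beta$ with $\Rank M_k\leq s\leq k(\ell+1)+1$, since $A_\gamma$ is a matrix of size $k(\ell+1)+1$.

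The main obstacle is sharpening the bound from $k(\ell+1)+1$ down to $k(\ell+1)$ in the only case where equality can occur, namely when $A_\gamma\succ 0$. Here, in contrast to Theorem \ref{240822-1317}, no preparatory affine linear transformation is needed: the boundary moments $\gamma_{2k+1}$ and $\gamma_{-2k\ell-1}$ are automatically independent of the data, because the image of $(i,j)\mapsto i-j\ell$ on $\{i,j\in\ZZ_+\colon i+j\leq 2k\}$ is exactly $\{-2k\ell,\ldots,2k\}$. My plan is then to adapt \cite[Lemma 2.11]{Zal21} to the Laurent/bi-sided Hankel setting: replacing one such free entry by a variable $\mathrm z$ produces a partially positive definite Hankel matrix $A_{\gamma(\mathrm z)}$, and I would show the existence of two choices $z^\pm$ of the variable rendering $A_{\gamma(z^\pm)}$ psd of rank $k(\ell+1)$. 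The resulting $(\RR\setminus\{0\})$-rm of $\gamma(z^\pm)$ still represents $\beta$ because the perturbed entry lies outside the dependency range. I expect verifying the existence of the correct rank-dropping $z^\pm$ to be the most delicate step, the STHMP interaction between the negative and positive tails of $\gamma$ through the bi-sided Hankel structure being more subtle than in the one-sided situation of Theorem \ref{240822-1317}.
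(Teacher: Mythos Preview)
Your overall plan is essentially the paper's: reduce to a two--sided univariate sequence $\gamma=(\gamma_{-2k\ell},\ldots,\gamma_{2k})$ via the parametrization $x\mapsto(x,x^{-\ell})$, identify a principal submatrix of $M_{k+\ell}$ with the Hankel matrix $A_{\widetilde\gamma}$ (in fact no nontrivial $P$ is needed---it is literal equality), and invoke the STHMP solution \cite[Theorem~3.1]{Zal22b} to get an $s$--atomic measure with $s\leq k(\ell+1)+1$. The divergence, and the gap, is in the sharpening step.

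Your claim that the image of $(i,j)\mapsto i-j\ell$ on $\{i+j\leq 2k\}$ is the full interval $\{-2k\ell,\ldots,2k\}$ is false: for instance $s=-2k\ell+1$ forces $j=2k$ and $i=1$, hence $i+j=2k+1>2k$. This matters because your proposed perturbation targets $\gamma_{2k+1}$ or $\gamma_{-2k\ell-1}$, neither of which appears in $A_\gamma$ at all (they live only in $\widetilde\gamma$), so altering them cannot lower $\Rank A_\gamma$. The paper instead perturbs $\gamma_{-2k\ell+1}$, which \emph{is} an entry of $A_\gamma$ yet is undetermined by $\beta^{(2k)}$ precisely because of the gap in the image you overlooked; then \cite[Lemma~2.11]{Zal21} applies directly.

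There is a second point you skate over. In the STHMP a psd, singular Hankel matrix does not automatically yield a $(\RR\setminus\{0\})$--representing measure: one of the atoms of the associated Hamburger measure could be $0$. After producing the two rank--$k(\ell+1)$ completions $A_{\gamma(z^\pm)}$, the paper appeals to \cite[Proposition~2.5]{Zal22b} (applied to the reversed sequence) to certify that at least one of $\gamma(z^\pm)$ is both positively and negatively recursively generated, hence has all atoms nonzero. Your sentence ``The resulting $(\RR\setminus\{0\})$-rm of $\gamma(z^\pm)$'' presumes exactly what needs to be argued here.
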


\begin{theorem}[Odd case]
	\label{151122-1348}
	Let $K:=\{(x,y)\in \RR^2\colon yx^\ell=1\}$, where $\ell\in \NN\setminus\{1\}$,
	and 
		$\beta:=\beta^{(2k-1)}=(\beta_{i,j})_{i,j\in \ZZ_+,i+j\leq 2k-1}$, where $k\geq \ell+1$.
	The following statements are equivalent:
	\begin{enumerate}
		\item
		 	\label{151122-1348-pt1}
				$\beta$ has a $K$--representing measure.
		\item
		 	\label{151122-1348-pt4}
				$\beta$ has a $s$--atomic $K$--representing measure for some $s$ satisfying 
 					$$\Rank M_k\leq s\leq k(\ell+1)-\Big\lfloor\frac{\ell}{2}\Big\rfloor+1.$$
		\item 
		 	\label{151122-1348-pt2}		 	
				$\beta^{(2k-1)}$ can be extended to a sequence $\beta^{(2k)}$
				such that $M_{k}$ satisfies $YX^\ell=1$
				and admits a positive semidefinite, recursively generated extension $M_{k+\ell}$.
		\item
		 	\label{151122-1348-pt3}
				$\beta^{(2k-1)}$ can be extended to a sequence $\beta^{(2k)}$
				such that $M_{k}$ satisfies $YX^\ell=1$
				and admits a positive semidefinite extension $M_{k+\ell+1}$.
	\end{enumerate}
\end{theorem}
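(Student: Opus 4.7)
The plan is to mirror the proof of Theorem \ref{280822-2053} and then to adjust its endgame in the same spirit in which Theorem \ref{240822-1317-odd} adjusts Theorem \ref{240822-1317}. As in those results, the implications $\eqref{151122-1348-pt1}\Rightarrow\eqref{151122-1348-pt3}$ and $\eqref{151122-1348-pt4}\Rightarrow\eqref{151122-1348-pt1}$ are trivial, and $\eqref{151122-1348-pt3}\Rightarrow\eqref{151122-1348-pt2}$ follows from \cite[Theorem 3.14]{CF96} applied to the chosen extension $\beta^{(2k)}$. The core of the proof is $\eqref{151122-1348-pt2}\Rightarrow\eqref{151122-1348-pt4}$.

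For that implication, the column relation $YX^\ell=1$ together with recursive generation of $M_{k+\ell}$ yields the linear identities $\beta_{i+\ell,j+1}=\beta_{i,j}$ for all admissible $(i,j)$. This lets us parametrize the extended sequence by a univariate sequence $\gamma$ indexed by integers in a range roughly $[-(k+\ell)\ell,\,k+\ell]$, via the identification $\gamma_{i-j\ell}=\beta_{i,j}$; this reflects the fact that on $K$ one has $y=x^{-\ell}$. Following the pattern of Claims 4-6 in the proof of Theorem \ref{240822-1317}, we check that (a) this definition of $\gamma_t$ is consistent (any two representations $t=i-j\ell=i'-j'\ell$ lead to the same value, a consequence of the rg relation); (b) an $s$-atomic $K$-rm for $\beta$ corresponds bijectively to an $s$-atomic $(\RR\setminus\{0\})$-rm for $\gamma$ via $(x_p,x_p^{-\ell})\leftrightarrow x_p$; and (c) a suitable restriction $(M_{k+\ell})|_{\cB}$ is congruent to the Hankel-type matrix $A_\gamma$ by an invertible block-triangular matrix $P$, so positive semidefiniteness transfers from the former to the latter.

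Since $\gamma$ involves negative indices, the relevant univariate problem is a strong truncated Hamburger moment problem rather than the ordinary one, solved by \cite[Theorem 3.1]{Zal22b} and its mixed-parity adaptation sketched in Section \ref{251122-2143}. It provides a $(\Rank A_\gamma)$-atomic $(\RR\setminus\{0\})$-rm for $\gamma$, which pulls back to a $K$-rm for $\beta$ of the same atom count; in the even-degree version this yields $k(\ell+1)$ after a standard final trim by one atom. For the present odd case, only a proper subset of $\{\gamma_t\}$ is actually determined by $\beta^{(2k-1)}$, and the extension $\beta^{(2k)}$ adds just the constraints $\gamma_{i(\ell+1)-2k\ell}=\beta_{i,2k-i}$ for $0\leq i\leq 2k$. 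Mimicking the endgame of Theorem \ref{240822-1317-odd}, one first solves the $\RR$-TMP on the determined subsequence by \cite[Theorem 3.9]{CF91} or \cite[Theorem 3.1]{CF91} according to the parity of its length, and then freely realizes the remaining $\gamma_t$'s from the resulting measure. The count gives at most $k(\ell+1)-\lfloor\ell/2\rfloor+1$ atoms.

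The principal obstacle is part (c) together with the sharp combinatorial count for the odd case. Producing the congruence $P$ requires choosing $\cB$ so that its monomials index distinct integer exponents of $x$ after applying the relation $y=x^{-\ell}$, and verifying the block-triangular form of $P$ involves combinatorial identities analogous to Claim 2 in the proof of Theorem \ref{240822-1317}. The odd-case saving of $\lfloor\ell/2\rfloor-1$ atoms, which activates only for $\ell\geq 4$, emerges from a parity count over residue classes modulo $\ell+1$ and appears to require precisely the bookkeeping already developed in the proof of Theorem \ref{240822-1317-odd}.
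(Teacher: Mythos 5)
Your proposal follows essentially the same route as the paper: reduce to a univariate sequence $\gamma$ via $\gamma_{i-j\ell}=\beta_{i,j}$, identify $(M_{k+\ell})|_{\cB}$ with the Hankel matrix $A_{\widetilde\gamma}$ (here $P$ is in fact the identity, since the monomials of $\cB$ map injectively to powers of $x$), invoke the strong truncated Hamburger results of \cite{Zal22b}, and then, in the odd case, exploit the fact that $\gamma_{-2k\ell},\ldots,\gamma_{-2k\ell+\ell-1}$ are not constrained by $\beta^{(2k-1)}$ to truncate the univariate problem and save atoms. Two small corrections to your endgame: the truncated subsequence is $\gamma^{(-2k\ell+2\lfloor \ell/2\rfloor,\,2k)}$, whose representing measure must be supported on $\RR\setminus\{0\}$, so the relevant tool is \cite[Theorem 3.1]{Zal22b} (both endpoints are arranged to be even, so no parity split via \cite{CF91} is needed), and the bound $m=k(\ell+1)-\lfloor\ell/2\rfloor+1$ is simply the size of the Hankel matrix of that truncated sequence, not a count over residue classes modulo $\ell+1$.
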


\begin{remark}
\label{211122-2118}
\begin{enumerate}
\item\textbf{Previous bounds on the size of extensions in \eqref{151122-1348-pt3} of Theorem \ref{280822-2053}:}
	In \cite[Section 6]{Fia11} the author considered TMPs on $\cZ(p)$
	in terms of the size of psd extensions of the moment matrix
	also for polynomials of the form	$p(x,y)=yq(x)$, 
	where $q\in \RR[x]$.
			Namely, by \cite[Propositions 6.1, 6.4]{Fia11},
			a sequence of degree $2k$ admits a $\cZ(p)$--rm, 
			if $M_k$ admits a psd extensions $M_{k+r}$,
			where 
					$r=(2k+2)(2+\deg q)-(1+\deg q+k).$
	The proof of this result relies on the truncated Riesz-Haviland theorem \cite[Theorem 1.2]{CF08}
	and a sum-of-squares representations for polynomials, strictly positive on $\cZ(p)$ 
	(\cite[Proposition 6.4]{Fia11} and \cite[Proposition 5.1]{Sto01}).
	Part \eqref{280822-2053-pt3} of Theorem \ref{280822-2053} improves Fialow's result in case $q(x)=x^\ell$, $\ell\geq 2$,
	by	decreasing the size of the extensions to $r=\ell+1$.

\item
\textbf{Known bounds on the number of atoms in \eqref{280822-2053-pt4} of Theorems \ref{280822-2053}, \ref{151122-1348}}:
	Similarly as in Remark \ref{191122-1055}.\eqref{191122-1055-pt2}, part \eqref{280822-2053-pt4} of 
	Theorem \ref{280822-2053} is a counterpart of \cite[Corollary 7.6]{RS18} for even degree sequences on curves $\cZ(yx^\ell-1)$,
	while part	\eqref{151122-1348-pt4} of Theorem \ref{151122-1348} 
	improves \cite[Corollary 7.6]{RS18} for curves $\cZ(yx^\ell-1)$
	by decreasing it for $\lfloor\frac{\ell}{2}\rfloor-1$.

\item
	\textbf{Uniqueness and description of all solutions in Theorems \ref{280822-2053} and \ref{151122-1348}:}
	The same comment as for Remark \ref{191122-1055}.\eqref{120323-0031} applies here.
	Beyond the case $yx^2=1$ (see Example \ref{141122-1921} below) the structure of the missing entries of $A_{\gamma}$ from the proof 
	of Theorem \ref{280822-2053} is too complicated to have control over all psd completions and consequently over the uniqueness and the description 
	of all solutions.
\item
	\textbf{Complexity of checking conditions in \eqref{240822-1317-pt3} of Theorems \ref{280822-2053} and \ref{151122-1348}:}
	The same comment as for Remark \ref{191122-1055}.\eqref{120323-0036} applies here.
	The main complexity question is the SDP feasibility question, which is cheaper when dealt with on a univariate sequence $A_{\widetilde \gamma}$ 
	defined in the proof. A precise SDP is stated in Theorem \ref{14122-1729} below.
\end{enumerate}
\end{remark}

\begin{proof}[Proof of Theorem \ref{280822-2053}]
	The implications 
		$\eqref{280822-2053-pt1}\Rightarrow\eqref{280822-2053-pt3}$ 
	and
		$\eqref{280822-2053-pt4}\Rightarrow\eqref{280822-2053-pt1}$ 	
	are trivial.
	The implication $\eqref{280822-2053-pt3}\Rightarrow\eqref{280822-2053-pt2}$ follows by \cite[Theorem 3.14]{CF96}.
	It remains to prove the implication $\eqref{280822-2053-pt2}\Rightarrow\eqref{280822-2053-pt4}$.
	Assume that $YX^\ell$ is a column relation and $M_k$ admits a psd, rg extension $M_{k+\ell}$.
Let 
	\begin{equation}	
		\label{290822-0930}
			\cB=\left\{y^{k+1}x^{\ell-1},y^{k},y^{k}x,\ldots,y^kx^{\ell-1},\ldots,y,yx,\ldots,yx^{\ell-1},1,x,\ldots,x^{k+1}\right\}
	\end{equation}
	be the set of monomials and $V$ the vector subspace in $\RR[x,y]_{k+\ell}$ generated by the set $\cB$.
	Since $M_{k+\ell}$ satisfies 
	$$
	X^iY^{j}=
	\left\{
	\begin{array}{rl}
	X^{i \modulo \ell}Y^{j-\lfloor\frac{i}{\ell}\rfloor},&\text{if }i,j\in \ZZ_+, \; i+j\leq k,\; j\geq\lfloor\frac{i}{\ell}\rfloor,\\[0.5em]
	X^{i -j\ell},&\text{if }i,j\in \ZZ_+, \; i+j\leq k,\; j<\lfloor\frac{i}{\ell}\rfloor,
	\end{array}
	\right.
	$$ 
	it follows that the columns from $\cB$
	span $\cC(M_{k+\ell})$.
	Let
		$p(x,y)=\sum_{i,j}p_{ij}x^iy^j\in V$
	be a polynomial and 
		$\widehat p$
	a vector of its coefficients ordered in the basis $\cB$.
	We define a univariate polynomial $q_p(x)$ corresponding to $p(x,y)$, by
	\begin{equation}
	\label{301122-2251}
		g_p(x)
			:=
			p(x,x^{-\ell})
			=
			\sum_{i,j}p_{ij} x^{i-\ell j}
			=:
			\sum_{s=-k\ell-1 }^{k+1} g_{p,s} x^s
			\in 
		\RR[x]_{k\ell+1}.
	\end{equation}
	Let $\widehat{g_p}$ be its vector of coefficients in the basis 
	\begin{equation}
		\label{211122-2139}
			\cB_1=\{x^{-k\ell-1},x^{-k\ell},\ldots,x^{k+1}\}.
	\end{equation}
	The monomials $x^{i_1}y^{j_1}$, $x^{i_2}y^{j_2}$ from $\cB$  correspond to the same monomial $x^s$
	by the correspondence \eqref{301122-2251} iff $i_1-\ell j_1=i_2-\ell j_2$, which is further equivalent to
	$i_1=i_2$ and $j_1=j_2$ (since $i_1$ and $i_2$ are at most $\ell-1$ in $\cB$).
	Therefore
	\begin{equation}
			\label{131122-2106}
				\widehat{g_p}=\widehat p.
	\end{equation}
	
	We define two univariate sequences
	\begin{align*}
		\gamma
		&:=\gamma^{(-2k\ell,2k)}
		=
		(\gamma_{-2k\ell},\gamma_{-2k\ell+1},\ldots,\gamma_0,\ldots,\gamma_{2k})\in \RR^{2k(1+\ell)+1},\\
		\widetilde\gamma
		&:=
		\gamma^{(-2k\ell-2,2k+2)}
		=
		(\gamma_{-2k\ell-2},\gamma_{-2k\ell-1},\gamma,\gamma_{2k+1},\gamma_{2k+2})\in \RR^{2k(1+\ell)+5},
	\end{align*}
	by the formula
	 \begin{equation}\label{280822-2111}
		\gamma_{t}
		=
		\left\{
		\begin{array}{rl}
			\beta_{t,0},& \text{if }t\geq 0,\\[0.5em]
			\beta_{t+\ell\big\lceil{\frac{|t|}{\ell}}\big\rceil,\big\lceil\frac{|t|}{\ell}\big\rceil},& \text{if } t<0.
		\end{array}
		\right.
	\end{equation}
	Note that for $t<0$ we have that 
		$t+\ell\big\lceil\frac{|t|}{\ell}\big\rceil\leq \ell-1$,
		$\big\lceil\frac{|t|}{\ell}\big\rceil\leq 2k+1$ (since $\ell\geq 3$)
	and hence
		$$
		t+\ell\Big\lceil\frac{|t|}{\ell}\Big\rceil+\Big\lceil\frac{|t|}{\ell}\Big\rceil
		\leq 
		\ell-1+2k+1
		=2k+\ell.
		$$
	Therefore 
		$\beta_{t+\ell\big\lceil{\frac{|t|}{\ell}}\big\rceil,\big\lceil\frac{|t|}{\ell}\big\rceil}$
	is well-defined being an element of the matrix $M_{k+\ell}$ (since $2k+\ell\geq 2k+2\ell$).\\

	By the following claim solving the $K$--TMP for $\beta$ is equivalent to solving the $(\RR\setminus\{0\})$--TMP for $\gamma$.\\

	\noindent\textbf{Claim 1.} 
		Let $u\in \NN$.
			A sequence $\gamma$ admits a $u$--atomic $(\RR\setminus \{0\})$--rm
		if and only if 
			$\beta$ admits a $u$--atomic $K$--rm.\\

\noindent \textit{Proof of Claim 1.} 
First we prove the implication $(\Rightarrow)$. 
Let $x_1,\ldots, x_u$, be the atoms in the $(\RR\setminus\{0\})$--rm for $\gamma$ with the corresponding densities $\rho_1,\ldots,\rho_u$.
We will prove that the atoms $(x_1,(x_1)^{-\ell}),\ldots, (x_u,(x_u)^{-\ell})$ with densities $\rho_1,\ldots,\rho_p$ are the $K$--rm for $\beta$.
We separate two cases:
\begin{enumerate}
\item $\lfloor\frac{i}{\ell}\rfloor \geq j$:
	\begin{align*}
		\beta_{i,j}
		&=
		\beta_{i-\ell j,0}
		=
		\gamma_{i-\ell j}
		=
		\sum_{p=0}^u \rho_p(x_{p})^{i-\ell j}
		=
		\sum_{p=0}^u \rho_p(x_{p})^{i}((x_p)^{-\ell})^{j},
	\end{align*}
	were we used the fact that $\beta$ is rg in the first equality,
	\eqref{280822-2111} in the second equality,
	the definitions of $\rho_p, x_p$ in the third equality
	and 
	split $(x_p)^{i-\ell j}$ into two parts in the last equality.
\item $\lfloor\frac{i}{\ell}\rfloor <j$:
\begin{align*}
		\beta_{i,j}
		=
		\beta_{i \modulo \ell,j-\lfloor\frac{i}{\ell}\rfloor}
		=
		\gamma_{-(j-\lfloor\frac{i}{\ell}\rfloor)\ell+i \modulo \ell}
		&=
		\sum_{p=0}^u \rho_p(x_{p})^{-(j-\lfloor\frac{i}{\ell}\rfloor)\ell+i \modulo \ell}\\
		&=
		\sum_{p=0}^u \rho_p(x_{p})^{\lfloor\frac{i}{\ell}\rfloor \ell+i \modulo \ell}((x_p)^{-\ell})^{j}\\
		&=
		\sum_{p=0}^u \rho_p(x_{p})^{i}((x_p)^{-\ell})^{j},
	\end{align*}
	were we used the fact that $\beta$ is rg in the first equality,
	\eqref{280822-2111} in the second equality,
	the definitions of $\rho_p, x_p$ in the third equality,
	split the exponent at $x_p$ into two parts in the fourth equality
	and 
	used that 
		$\lfloor\frac{i}{\ell}\rfloor \ell+i \modulo \ell=i$
	in the last equality.
\end{enumerate}
	This proves the implication $(\Rightarrow)$.

	It remains to prove the implication $(\Leftarrow)$.
	Let 	
		$(x_1,(x_1)^{-\ell}),\ldots, (x_u,(x_u)^{-\ell})$	
	be the atoms in the $K$--rm for $\beta$ with the corresponding densities 
		$\rho_1,\ldots,\rho_u$.
	We will prove that the atoms 
		$(x_1,\ldots,x_u)$ 
	with densities 
		$\rho_1,\ldots,\rho_p$ 
	are the $(\RR\setminus\{0\})$--rm for $\gamma$:
	\begin{itemize}
	\item
	For $t\geq 0$ we have that
	\begin{equation*}
	  \gamma_{t}
		=\beta_{t,0}
		=\sum_{p=0}^u\rho_p (x_p)^{t},
	\end{equation*}
	where we use the definition \eqref{280822-2111} in the first equality and the definitions of $\rho_p,x_p$ in the second.
	\item
	For $t<0$ we have that
	\begin{equation*}
	  \gamma_{t}
		=\beta_{t+\ell\big\lceil{\frac{|t|}{\ell}}\big\rceil,\big\lceil\frac{|t|}{\ell}\big\rceil} 
		=\sum_{p=0}^u\rho_p (x_p)^{t+\ell\big\lceil{\frac{|t|}{\ell}}\big\rceil}((x_p)^{-\ell})^{\big\lceil\frac{|t|}{\ell}\big\rceil}
		=\sum_{p=0}^u\rho_p (x_p)^{t},
	\end{equation*}
	where we use the definition \eqref{280822-2111} in the first equality and the definitions of $\rho_p,x_p$ in the second.
	\end{itemize}
	This proves the implication $(\Leftarrow)$.
	\hfill $\blacksquare$\\

	Let	$(M_{k+\ell})|_{\cB}$ be the restriction of $M_{k+\ell}$ to the rows and columns indexed by monomials (capitalized)
	from $\cB$. The following claim gives an explicit connection between $(M_{k+\ell})|_{\cB}$ and the Hankel matrix 	
	$A_{\widetilde\gamma}$ of the sequence $\widetilde\gamma$.\\
	
	\noindent\textbf{Claim 2.} 
	We have that
	\begin{equation}
		\label{131122-2155}
			(M_{k+\ell})|_{\cB}=A_{\widetilde\gamma}.
	\end{equation}

	\noindent \textit{Proof of Claim 2.}
	Let 
		$p(x,y)=\sum_{i,j}p_{ij}x^iy^j\in V$ 
	and
		$r(x,y)=\sum_{i,j}r_{ij}x^iy^j\in V$ 
	be polynomials from the vector subspace $V$ 
	and 
		$\widehat p$, $\widehat r$ 
	vectors of their coefficients ordered in the basis $\cB$.
	Let 
	$\widetilde \beta:=\beta^{(2(k+1)\ell+2(\ell-1))}$.
	Then we have
	\begin{align*}
		(\widehat r)^T\left((M_{k+\ell})|_{\cB}\right)\widehat p
		&=^1 L_{\widetilde\beta}(pr)
		=L_{\widetilde \beta}\big(\sum_{i_1,i_2,j_1,j_2} p_{i_1j_1}r_{i_2j_2}x^{i_1+i_2}y^{j_1+j_2}\big)\\
		&=^2 \sum_{i_1,i_2,j_1,j_2} p_{i_1j_1}r_{i_2j_2}\beta_{i_1+i_2,j_1+j_2}\\
		&=^3 \sum_{i_1,i_2,j_1,j_2} p_{i_1j_1}r_{i_2j_2}\gamma_{i_1+i_2-(j_1+j_2)\ell}\\
	 	&=^4 L_{\widetilde\gamma}\left(\sum_{i_1,i_2,j_1,j_2} 
			p_{i_1j_1}r_{i_2j_2}x^{i_1+i_2-(j_1+j_2)\ell}\right)\\
	 	&=^5 L_{\widetilde\gamma}\left(\sum_{i_1,i_2,j_1,j_2} p_{i_1j_1}x^{i_1-j_1\ell}\cdot r_{i_2j_2}x^{i_2-j_2\ell}\right)\\
	 	&=^6 L_{\widetilde\gamma}\Big( 
			\Big(\underbrace{\sum_{i_1,j_1}p_{i_1j_1} x^{i_1-j_1\ell}}_{g_p(x)}\Big)
			\Big(\underbrace{\sum_{i_2,j_2} r_{i_2j_2}x^{i_2-j_2\ell}}_{g_r(x)}\Big)\Big)\\
		&=^7 \widehat{g_r}^T A_{\widetilde\gamma}\widehat{g_p}
		=\widehat r^T A_{\widetilde\gamma} \widehat p,
	\end{align*}
	where 
		in the first line we used the correspondence between the moment matrix and the Riesz functional $L_{\widetilde \beta}$,
		the definition $L_{\widetilde \beta}$ in the second,
		\eqref{280822-2111} and the fact that $\beta$ is rg in the third (rg is needed if $i_1+i_2\geq \ell$),
		the definition of $L_{\widetilde\gamma}$ in the fourth,
		decomposed the exponent of $x$ into two parts in the fifth,
		decomposed a sum into the product of two sums in the sixth,
		in the seventh  we used the correspondence between $A_{\widetilde \gamma}$ and the Riesz functional $L_{\widetilde\gamma}$,
	where $\widehat{g_p}$, $\widehat{g_r}$ are the 
		vectors of coefficents of $g_p$ and $g_r$ in the basis $\cB_1$ (see \eqref{211122-2139})
		and \eqref{131122-2106}.
		Since $p$ and $q$ were arbitrary from $V$, this proves Claim 2.
	\hfill$\blacksquare$\\

	Since $(M_{k+\ell})|_{\cB}$ is psd, it follows from \eqref{131122-2155}
	that $A_{\widetilde\gamma}$ is psd. 
	We separate two cases. Either $A_{\gamma}$ is pd or $A_{\gamma}$ is psd, singular,
	prg by \cite[Theorem 2.6]{CF91} and nrg by \cite[Proposition 2.1.(5)]{Zal22b}.
	By \cite[Theorem 3.1]{Zal22b}, 
		$\gamma$ 
	admits a $(\Rank A_{\gamma})$--atomic $(\RR\setminus\{0\})$--rm.
	Since $\Rank M_k\leq \Rank A_{\gamma}\leq k(\ell+1)+1$,  
	using Claim 2 the following holds:
	\begin{enumerate}[(2')]
	\item
		 	\label{131122-2307}
				$\beta$ has a $s$--atomic $K$--rm for some $s$ satisfying 
 				\begin{equation}
					\label{131122-2308}
						\Rank M_k\leq s\leq \Rank A_\gamma\leq k(\ell+1)+1.
				\end{equation}
	\end{enumerate}
	To obtain \eqref{280822-2053-pt4} of Theorem \ref{280822-2053} we need to decrease the upper bound in
     \eqref{131122-2308} by 1. Note that the bound $k(\ell+1)+1$ occurs only in the case 
	$A_{\gamma}$	is pd. 
	We denote by $\gamma(\mathrm z)$ a sequence obtained from the sequence $\gamma$ by replacing $\gamma_{-2k\ell+1}$ with a variable $\mathrm z$.
	The matrix $A_{\gamma(\mathrm z)}$ is a partially positive definite matrix and by \cite[Lemma 2.11]{Zal21} there exist two choices of $\mathrm z$, 
	which we denote by $z^{\pm}$, such that $A_{\gamma(z^{\pm})}$ is psd and has rank $k(\ell+1)$.
	Using \cite[Proposition 2.5]{Zal22b} for the reversed sequence $(\gamma(z^{\pm}))^{(\text{rev})}$ of $\gamma(z^{\pm})$,
	we see that at least one of $(\gamma(z^{\pm}))^{(\text{rev})}$  admits a $(\RR\setminus \{0\})$--rm.
	Hence, at least one of $\gamma(z^{\pm})$ is prg and nrg, and admits a $k(\ell+1)$--atomic $(\RR\setminus \{0\})$--rm.
	If none of the moments $\beta_{i,j}$ of the sequence $\beta$ depends on $\gamma_{-2k\ell+1}$, 
	the $(\RR\setminus \{0\})$--rm for $(\gamma(z^{\pm}))^{(\text{rev})}$ will generate the $K$--rm
	for $\beta$ as in the proof of Claim 2. 
	But by definition \eqref{280822-2111}, there is indeed no moment from $\beta$, which depends on $\gamma_{-2k\ell+1}$  (since
	we need to represent moments of degree at least $-2k\ell$ and at most $2k$, while $\gamma_{-2k\ell+1}$ corresponds to $\beta_{\ell-1,2k}$ in some extension of $	
\beta$),
	which concludes the proof of Theorem \ref{280822-2053}.
\end{proof}

To prove Theorem \ref{151122-1348} only a little adaptation of the last part of the proof of Theorem \ref{280822-2053} is needed,
which we now explain.

\begin{proof}[Proof of Theorem \ref{151122-1348}]
The implications
		$\eqref{151122-1348-pt1}\Rightarrow\eqref{151122-1348-pt3}$ 
	and
		$\eqref{151122-1348-pt4}\Rightarrow\eqref{151122-1348-pt1}$ 	
	are trivial.
	The implication $\eqref{151122-1348-pt3}\Rightarrow\eqref{151122-1348-pt2}$ follows from \cite[Theorem 3.14]{CF96}.
	It remains to prove the implication $\eqref{151122-1348-pt2}\Rightarrow\eqref{151122-1348-pt4}$. 
	Following the proof of Theorem \ref{280822-2053} everything remains the same until \ref{131122-2307}.
It remains to justify that the upper bound in \eqref{131122-2308} can be decreased to $m:=k(\ell+1)-\lfloor\frac{\ell}{2}\rfloor+1$.
If $\Rank A_{\gamma}\leq m$, then we are already done. From now on we assume that $r:=\Rank A_{\gamma}>m$. 
Since $\gamma$ admits a $(\RR\setminus\{0\})$--representing measure, which we denote by $\mu$,
it is nrg and 	
	$\Rank \gamma=\Rank A_\gamma=\Rank A_{\gamma}[r-1]$.
Hence,
	$A_{\gamma^{(2k-2(r-1),2k)}}$ is pd
and in particular also its submatrix
	$A_{\gamma^{(2k-2(m-1),2k)}}$ is pd.
We denote by
	$\gamma(\texttt{z}_1,\ldots,\texttt{z}_\ell)$
a sequence obtained from the sequence $\gamma$ by replacing
the moments 
	$\gamma_{-2k\ell},\gamma_{-2k\ell+1},\ldots,\gamma_{-2k\ell+\ell-1}$
with variables $\texttt{z}_1,\ldots,\texttt{z}_\ell$.
By \cite[Theorem 3.1]{Zal22b}, the sequence $\gamma^{(2k-2(m-1),2k)}$ has 
a $m$--atomic $(\RR\setminus\{0\})$--rm
(to apply \cite[Theorem 3.1]{Zal22b} we used that 
	$2k-2(m-1)=-2k\ell+2\big\lfloor \frac{\ell}{2}\big\rfloor <0).$
We denote the measure obtained in this way by $\mu_1$
and generate its moment sequence $\gamma(z_1,\ldots,z_{\ell})$, where $z_1,\ldots,z_{\ell}$
are the moments of degrees $-2k\ell,-2k\ell+1,\ldots,-2k\ell+\ell-1$, respectively.
	If none of the moments $\beta_{i,j}$ of the sequence $\beta^{(2k-1)}$ depends on 
		$\gamma_{-2k\ell},\gamma_{-2k\ell+1},\ldots,\gamma_{-2k\ell+\ell-1}$, 
	then $\mu_1$ will generate the $K$--rm
	for $\beta^{(2k-1)}$ as in the proof of Claim 1 of Theorem \ref{280822-2053}. 
	But by definition \eqref{280822-2111}, there is indeed no moment from $\beta^{(2k-1)}$ depending on 
		$\gamma_{-2k\ell}, \gamma_{-2k\ell+1},\ldots, \gamma_{-2k\ell+\ell-1}$ ,
	which concludes the proof of Theorem \ref{151122-1348}.
\end{proof}

A corollary to Theorem \ref{280822-2053}  is an improvement of the bounds on the degrees of sums of squares in the Positivstellensatz \cite[Corollary 6.4]{Fia11} for the curves of the form $yx^\ell=1,$ $\ell\in \NN\setminus\{1\}$.

\begin{corollary}
	\label{290822-1052}
	Let $K:=\{(x,y)\in \RR^2\colon yx^\ell=1\}$, where $\ell\in \NN\setminus\{1\}$ and $k\geq \ell+1$.
	If $r(x,y)\in\RR[x,y]_{2k}$ is strictly positive on $K$, then $r$ admits a decomposition
		$$r(x,y)=\sum_{i=1}^{\ell_1} f_{i}(x,y)^2+(yx^\ell-1)\sum_{i=1}^{\ell_2}g_i(x,y)^2-(yx^\ell-1)\sum_{i=1}^{\ell_2}h_i(x,y)^2,$$
	where $\ell_1,\ell_2,\ell_3\in \ZZ_+$,
		$f_i,g_i,h_i\in \RR[x,y]$
	and 
		$$
			\deg f_i^2\leq 2m,
			\; \deg ((yx^\ell-1)g_i^2)\leq 2m,		
			\; \deg ((yx^\ell-1)h_i^2)\leq 2m
		$$
	with
		$m=k+\ell+1.$
	where $\ell_1,\ell_2,\ell_3\in \ZZ_+$.
\end{corollary}

\begin{proof}
	By the equivalence $\eqref{280822-2053-pt1}\Leftrightarrow\eqref{280822-2053-pt2}$
	of Theorem \ref{280822-2053}, the set $K$ has the property $(R_{k,\ell})$ in the notation of \cite[p.\ 2713]{CF08}.
	Now the result follows by \cite[Theorem 1.5]{CF08}.
\end{proof}

\begin{remark}
The bound on $m$ in Theorem \ref{290822-1052} in \cite[Corollary 6.4]{Fia11}
is quadratic in $k$ and $\ell$, namely $(2k+2)(2+\ell)-(1+\ell)$.
\end{remark}

\subsection{A solution to the TMP based on the feasibility of a linear matrix inequality}
\label{251122-2140}

In this subsection we give another alternative solution to the TMP on curves $yx^\ell,$ where $\ell\in \NN\setminus\{1\}$, which is based on the 
feasibility of a linear matrix inequality associated to the univariate sequence $\gamma$, obtained from 
the original sequence $\beta$ as in the proof of the results in the previous subsection.
The feasibility question appears as a result of the fact that $\gamma$ is not fully determined by $\beta$,
but $\beta$ admits a $K$--representing measure if and only if
$\gamma$ can be completed to a sequence
admitting a $(\RR\setminus \{0\})$--representing measure. 

For $n_1,n_2\in \ZZ$, $n_1\leq n_2$, we denote by $[n_1:n_2]$ the set of all integers between $n_1$ and $n_2$.
Let the sets $N_1,N_2$ form a partition of $[n_1:n_2]$, i.e., $N_1,N_2\subseteq [n_1:n_2]$, $N_1\cup N_2=[n_1:n_2]$ and $N_1\cap N_2=\emptyset.$
Let $\Gamma_1:=(\gamma_t)_{t\in N_1}$  be a sequence of real numbers indexed by integers from $N_1$
and $\boldsymbol\Gamma_2:=(\boldsymbol\gamma_t)_{t\in N_2}$ a tuple of variables indexed by integers from $N_2$. 
Let 
\begin{equation}
	\label{230305-1020}
		F_{\Gamma_1}(\boldsymbol\Gamma_2):\RR^{|N_2|}\to \RR^{|N_1|+|N_2|}
\end{equation}
be a function with the output a sequence $(\widetilde \gamma_t)_{t\in [n_1:n_2]}$
where
$\widetilde \gamma_t=
\left\{
\begin{array}{rr}
\gamma_t,&	
	\text{if } t\in N_1,\\
\boldsymbol\gamma_t,&
	\text{if } t\in N_2.
\end{array}
\right.$

In Theorem \ref{14122-1729} below the set $N_1$ will be the set of indices, for which the corresponding univariate sequence $\gamma$
is determined by $\beta$, while the indices of the non-determined part will belong to $N_2$. Since we can either get a sequence with the lowest and highest degree terms both of odd degree or both of even degree or only the highest term of odd degree, for which the solutions to the STHMP are slightly different, we separate three cases for $N_1\cup N_2=[n_1:n_2]$ (see \eqref{110323-1841}).

\begin{theorem}
	\label{14122-1729}
	Let $K:=\{(x,y)\in \RR^2\colon yx^\ell=1\}$, $\ell\in \NN\setminus\{1\}$,  
	and 
		$$\beta:=\beta^{(d)}=(\beta_{i,j})_{i,j\in \ZZ_+,i+j\leq d},$$ 
	where $\big\lceil \frac{d}{2}\big\rceil\geq \ell+1$.
	Define
	$$
		N_{1}:=\{t\in \ZZ_-\colon t=-i\ell+j\;\;\text{for some}\;\;0\leq j<\ell, i\in \ZZ_+\text{ and }i+j\leq d\},
	$$
	\begin{equation}
		\label{241122-2232}
			\gamma_t=
			\left\{
			\begin{array}{rr}
			\beta_{t,0},&	\text{if }t\in (\NN\cup \{0\})\cap N_{1},\\
			\beta_{t+\ell\big\lceil{\frac{|t|}{\ell}}\big\rceil,\big\lceil\frac{|t|}{\ell}\big\rceil},&\text{if }t\in N_{1}\setminus  (\NN\cup \{0\}),
			\end{array}
			\right.
	\end{equation}
	and $\Gamma_1:=(\gamma_t)_{t\in N_1}$.
	Let
	\begin{equation}
		\label{110323-1841}
		[n_1:n_2]
		:=
		\left\{
		\begin{array}{rr}
		[-d\ell-2:d+2],& \text{if }d\text{ is even},\\[0.2em]
		[-d\ell-2:d+1],& \text{if only }\ell \text{ is even},\\[0.2em]
		[-d\ell-1:d+1],&  \text{if }d,\ell \text{ are odd},
		\end{array}
		\right.
	\end{equation}
	$\boldsymbol\Gamma_2:=(\boldsymbol\gamma_t)_{t\in N_2}$ be a tuple of variables with $N_2=[n_1:n_2]\setminus N_1$
	and
	$F_{\Gamma_1}(\boldsymbol\Gamma_2)$ be defined as in \eqref{230305-1020}.
	Then the following statements are equivalent:
	\begin{enumerate}
		\item
		 	\label{14122-1729-pt1}
				$\beta$ has a $K$--representing measure.
		\item
		 	\label{14122-1729-pt2}
				$\beta_{i+\ell,j+1}=\beta_{i,j}$ for every $i,j\in \ZZ_+$ such that $i+j\leq d-\ell-1$ and 
				there exists a tuple
				 	$\Gamma_2=(\gamma_t)_{t\in N_2}\in \RR^{N_2}$
				such that
					$A_{F_{\Gamma_1}(\Gamma_2)}\succeq 0$.
	\end{enumerate}
\end{theorem}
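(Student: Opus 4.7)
The plan is to follow the same strategy as in the proof of Theorem \ref{121122-1341}, substituting the multiplicative reduction of Theorem \ref{280822-2053} for the additive reduction of Theorem \ref{240822-1317} and invoking the solution of the strong truncated Hamburger moment problem from \cite[Theorem 3.1]{Zal22b} (together with its adaptations to odd-length sequences discussed after Algorithm \ref{alg-090323-1157}) in place of the classical truncated Hamburger moment problem solution \cite[Theorem 3.9]{CF91}.

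First I would identify $F_{\Gamma_1}(\boldsymbol\Gamma_2)$ with the sequence $\widetilde\gamma$ constructed in the proof of Theorem \ref{280822-2053}: formula \eqref{241122-2232} reproduces \eqref{280822-2111} precisely on those indices $t$ for which the relevant entry of $\beta$ is defined, namely $t\in N_1$, while the remaining coordinates $\boldsymbol\gamma_t$, $t\in N_2$, are exactly the free moments not determined by $\beta$. Claims 1 and 2 of Theorem \ref{280822-2053} then reduce the question of existence of a $K$--representing measure for $\beta$ to the question of existence of a $(\RR\setminus\{0\})$--representing measure for the truncated sequence $\gamma^{(-d\ell,d)}$, provided that the recursive relations $\beta_{i+\ell,j+1}=\beta_{i,j}$, $i+j\leq d-\ell-1$, encoding the column relation $YX^\ell=1$, hold.

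Next I would handle the three parity cases of \eqref{110323-1841} separately. In each case the result of \cite[Theorem 3.1]{Zal22b}, or its odd-degree variant adapted as in the remark following Algorithm \ref{alg-090323-1157}, characterizes the existence of a $(\RR\setminus\{0\})$--rm for $\gamma^{(-d\ell,d)}$ as the feasibility of a psd Hankel completion, where the ambient range $[n_1:n_2]$ depends on the parities of the endpoints $-d\ell$ and $d$: two extra moments on both sides in the even-even case ($d$ even); two extra on the negative side and one on the positive side in the mixed case ($d$ odd, $\ell$ even, so $-d\ell$ even and $d$ odd); and one extra on each side in the odd-odd case ($d,\ell$ both odd). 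In each case the extended range is of odd length, so that the associated Hankel matrix is well-defined and the $\succeq 0$ condition matches the STHMP criterion.

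The final step, already implicit in the proofs of Theorems \ref{280822-2053} and \ref{151122-1348}, is to verify that no moment $\beta_{i,j}$ with $i+j\leq d$ depends on any of the free variables $\boldsymbol\gamma_t$, $t\in N_2$. This ensures that feasibility of $A_{F_{\Gamma_1}(\Gamma_2)}\succeq 0$ yields a $K$--rm for $\beta$ via Claim 1 of Theorem \ref{280822-2053}, while conversely any $K$--rm for $\beta$ produces the desired $\Gamma_2$. I expect the main obstacle to be the parity bookkeeping needed to pin down the correct $[n_1:n_2]$ in each of the three cases; everything else amounts to a direct translation of the proof of Theorem \ref{121122-1341} to the multiplicative setting.
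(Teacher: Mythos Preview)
Your proposal is correct and follows essentially the same approach as the paper: identify $F_{\Gamma_1}(\boldsymbol\Gamma_2)$ with the extended sequence $\widetilde\gamma$ from the proof of Theorem \ref{280822-2053}, reduce to the STHMP for $\gamma^{(-d\ell,d)}$ via Claims 1 and 2 there, and then split into the three parity cases to determine the correct extended range $[n_1:n_2]$. The paper's proof is in fact terser than your sketch---it simply cites the proofs of Theorems \ref{280822-2053} and \ref{151122-1348} and writes down the three Hankel completions without elaborating on the STHMP variants---so your outline is, if anything, more explicit about the underlying mechanism.
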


\begin{proof}
	Assume that $d$ is even.
	Observing the proof of Theorem \ref{280822-2053} one can notice that
			$F_{\Gamma_1}(\boldsymbol\Gamma_2)$
	corresponds to the sequence $\widetilde\gamma$.
The original sequence $\beta$ determines only $\gamma_t$
for $t\in N_1$ by \eqref{241122-2232}, while for $t\in N_2$, $\boldsymbol\gamma_t$ are variables. By the proof of Theorem \ref{280822-2053},
$\beta$ will have a $K$--rm iff it satisfies the rg relations coming from the column relation $YX^\ell=1$ and there exists 
$\widetilde \gamma$ such that $A_{\widetilde \gamma}\succeq 0$.
This proves Theorem \ref{14122-1729} for even $d$.

Observing the proof of Theorem \ref{151122-1348} in case $d$ is odd one can notice that only 
	$$\gamma^{(-d\ell,d)}=(\gamma_{-d\ell},\gamma_{-d\ell+1},\ldots,\gamma_{d-1},\gamma_d)$$
needs to have a $(\RR\setminus\{0\})$--rm to obtain a $K$--rm for $\beta$. In case $d\ell$ is even, this is 
equivalent to $A_{\gamma^{(-d\ell-2,d+1)}}\succeq 0$, where 
	$$\gamma^{(-d\ell-2,d+1)}=(\gamma_{-d\ell-2},\gamma_{-d\ell-1},\ldots,\gamma_{d},\gamma_{d+1})$$
for some  $\gamma_{-d\ell-2}$, $\gamma_{-d\ell-1}$ and $\gamma_{d+1}$.
Since $\gamma^{(-d\ell-2,d+1)}$ corresponds to the sequence 
			$F_{\Gamma_1}(\boldsymbol\Gamma_2)$
for even $\ell$ and odd $d$, 
this proves  Theorem \ref{151122-1348} in this case.
If $d\ell$ is odd, then it suffices that there are $\gamma_{-d\ell-1}$
and $\gamma_{d+1}$ such that $A_{\gamma^{(-d\ell-1,d+1)}}\succeq 0$, where 
	$$\gamma^{(-d\ell-1,d+1)}=(\gamma_{-d\ell-1},\gamma_{-d\ell},\ldots,\gamma_{d},\gamma_{d+1}).$$
Since $\gamma^{(-d\ell-1,d+1)}$ corresponds to the sequence 
			$F_{\Gamma_1}(\boldsymbol\Gamma_2)$
for odd $d\ell$,
this proves Theorem \ref{151122-1348} in this case. 
\end{proof}

We will present the statement of Theorem \ref{14122-1729} on a few examples.
The following example is for $\ell=2$ and a sequence $\beta$ of even degree.

\begin{example}
	\label{141122-1921}
		Let $\beta=(\beta_{i,j})_{i,j\in \ZZ_+,i+j\leq 2k}$
	be a bivariate sequence of degree $2k$, $k\geq 3$,
	and $K:=\{(x,y)\in \RR^2\colon yx^2=1\}$.
	For the existence of a $K$--rm $\beta$
	must satisfy the relations $\beta_{i,j}=\beta_{i+2,j+1}$
	for every $i,j\in \ZZ_+$ such that $i+j\leq 2k-3$.
	In the notation of Theorem \ref{14122-1729},
	\begin{align*}
		N_1	
		&:=\{
		t\in \ZZ\colon -4k\leq t\leq 2k, t\neq-4k+1
		\},\\
		[n_1:n_2]
		&:=[-4k-2:2k+2],\quad
		N_2=\{-4k-2,-4k-1,-4k+1,2k+1,2k+2\},
	\end{align*}
	the formula 	\eqref{280822-2111} is equal to 
	\begin{equation*}
		\gamma_{t}
		=
		\left\{
		\begin{array}{rl}
			\beta_{t,0},& \text{if }t\in \ZZ_+\cap N_1,\\[0.5em]
			\beta_{t+2\big\lceil{\frac{|t|}{2}}\big\rceil,\big\lceil\frac{|t|}{2}\big\rceil},& \text{if } t\in N_1\setminus(\ZZ_+\cap N_1),
		\end{array}
		\right.
	\end{equation*}
	the function $F_{\Gamma_1}:\RR^{5}\to \RR^{6k+4}$ is defined by
	\begin{align*}
		F_{\Gamma_1}(\boldsymbol\Gamma_2)
		&=
		F_{\Gamma_1}(\boldsymbol\gamma_{\mathbf{-4k-2}},\boldsymbol\gamma_{\mathbf{-4k-1}},\boldsymbol\gamma_{\mathbf{-4k+1}},\boldsymbol\gamma_{\mathbf{2k+1}},\boldsymbol\gamma_{\mathbf{2k+2}})\\
		&:=(\boldsymbol\gamma_{\mathbf{-4k-2}},\boldsymbol\gamma_{\mathbf{-4k-1}},\gamma_{-4k},\boldsymbol\gamma_{\mathbf{-4k+1}},
			\gamma_{-4k+2},\ldots,\gamma_{2k},\boldsymbol\gamma_{\mathbf{2k+1}},\boldsymbol\gamma_{\mathbf{2k+2}}),
	\end{align*}
	and the matrix 
	$A_{F_{\Gamma_1}(\boldsymbol\Gamma_2)}$ 
	is equal to
\begin{equation*}\label{vector-v}
						\left(\begin{array}{c|cccccc|c}
						\boldsymbol\gamma_{\mathbf{-4k-2}} & \boldsymbol\gamma_{\mathbf{-4k-1}} & \gamma_{-4k}& \boldsymbol\gamma_{\mathbf{-4k+1}} & \gamma_{-4k+2}&\cdots& \gamma_{k} & \gamma_{k+1}\\
						\hline
						\boldsymbol\gamma_{\mathbf{-4k-1}} & \gamma_{-4k} & \boldsymbol\gamma_{\mathbf{-4k+1}} &\gamma_{-4k+2}&\iddots&&  \gamma_{k+1} &\gamma_{k+2}\\
						\gamma_{-4k} & \boldsymbol\gamma_{\mathbf{-4k+1}} &\gamma_{-4k+2}&\iddots && &\gamma_{k+2}&\vdots\\
						\boldsymbol\gamma_{\mathbf{-4k+1}} & \beta_{0,2k-1}&\iddots&	&&&\vdots&\vdots\\
						\gamma_{-4k+2} &\iddots& &	 & & &\vdots &\gamma_{2k}\\
						\vdots &&& && &\gamma_{2k}
& \boldsymbol\gamma_{\mathbf{2k+1}}\\
\hline
\gamma_{k+1}& \cdots & \cdots &\cdots & \gamma_{2k-1}&\gamma_{2k}& \boldsymbol\gamma_{\mathbf{2k+1}} & \boldsymbol\gamma_{\mathbf{2k+2}}
						\end{array}\right)
	\end{equation*}
	The question of feasibility of
	$A_{F_{\Gamma_1}(\boldsymbol\Gamma_2)}$ can be answered analytically, 
	since the structure of the missing entries is simple enough. Actually it is even easier to work with
	$A_{(\gamma_{-4k},\boldsymbol\gamma_{\mathbf{-4k+1}},\gamma_{-4k+2},\ldots,\gamma_{2k})}$ 
	and answer the feasibility question together with the conditions from the solution of \cite[Theorem 2.1]{Zal22b} (see \cite[Theorem 4.1]{Zal22b}).
\end{example}

The following example demonstrates the statement of Theorem \ref{14122-1729} 
for the case $yx^3=1$ and a sequence $\beta$ of even degree.

\begin{example}
	\label{141122-2021}
		Let $\beta=(\beta_{i,j})_{i,j\in \ZZ_+,i+j\leq 2k}$
	be a bivariate sequence of degree $2k$, $k\geq 4$,
	and $K:=\{(x,y)\in \RR^2\colon yx^3=1\}$.
	For the existence of a $K$--rm $\beta$ must satisfy the relations
	$\beta_{i,j}=\beta_{i+3,j+1}$ for every $i,j\in\ZZ_+$ such that $i+j\leq 2k-4$. 
	In the notation of Theorem \ref{14122-1729},
	\begin{align*}
		N_1	
		&:=\{
		t\in \ZZ\colon -6k\leq t\leq 2k, t\notin\{-6k+1,-6k+2,-6k+5\}
		\},\\
		[n_1:n_2]
		&:=[-6k-2:2k+2],\\
		N_2
		&:=\{-6k-2,-6k-1,-6k+1,-6k+2,-6k+5,2k+1,2k+2\},
	\end{align*}
	the formula 	\eqref{280822-2111} is equal to 
	\begin{equation*}
		\gamma_{t}
		=
		\left\{
		\begin{array}{rl}
			\beta_{t,0},& \text{if }t\in \ZZ_+\cap N_1,\\[0.5em]
			\beta_{t+3\big\lceil{\frac{|t|}{3}}\big\rceil,\big\lceil\frac{|t|}{3}\big\rceil},& \text{if } t\in N_1\setminus(\ZZ_+\cap N_1),		
		\end{array}
		\right.
	\end{equation*}
	the function $F_{\Gamma_1}:\RR^{7}\to \RR^{8k+4}$ is defined by
	\begin{align*}
		F_{\Gamma_1}(\boldsymbol\Gamma_2)
		&=F_{\Gamma_1}(\boldsymbol\gamma_{\mathbf{-6k-2}},\boldsymbol\gamma_{\mathbf{-6k-1}},\boldsymbol\gamma_{\mathbf{-6k+1}},\boldsymbol\gamma_{\mathbf{-6k+2}},\boldsymbol\gamma_{\mathbf{-6k+5}},\boldsymbol\gamma_{\mathbf{2k+1}},\boldsymbol\gamma_{\mathbf{2k+2}})\\
		&:=(\boldsymbol\gamma_{\mathbf{-6k-2}},\boldsymbol\gamma_{\mathbf{-6k-1}},{\gamma_{-6k}},\boldsymbol\gamma_{\mathbf{-6k+1}},\boldsymbol\gamma_{\mathbf{-6k+2}},\gamma_{-6k+3},\gamma_{-6k+4},\\
		&\hspace{2cm}
			\boldsymbol\gamma_{\mathbf{-6k+5}},\gamma_{-6k+6},\ldots,\gamma_{2k},\boldsymbol\gamma_{\mathbf{2k+1}},\boldsymbol\gamma_{\mathbf{2k+2}}),
	\end{align*}
	and the matrix 
	$A_{F_{\Gamma_1}(\boldsymbol\gamma_{\mathbf{-6k-2}},\boldsymbol\gamma_{\mathbf{-6k-1}},\boldsymbol\gamma_{\mathbf{-6k+1}},\boldsymbol\gamma_{\mathbf{-6k+2}},\boldsymbol\gamma_{\mathbf{-6k+5}},\boldsymbol\gamma_{\mathbf{2k+1}},\boldsymbol\gamma_{\mathbf{2k+2}})}$ 
	is equal to
\begin{footnotesize}
\begin{equation*}\label{vector-v}
						\left(\begin{array}{c|ccccccccc|c}
						\boldsymbol\gamma_{\mathbf{-6k-2}} & \boldsymbol\gamma_{\mathbf{-6k-1}} & \gamma_{-6k} & \boldsymbol\gamma_{\mathbf{-6k+1}} & \boldsymbol\gamma_{\mathbf{-6k+2}}&
						\gamma_{-6k+3}&\gamma_{-6k+4}&\boldsymbol\gamma_{\mathbf{-6k+5}}&
						\cdots& \gamma_{k} & \gamma_{k+1}\\
						\hline
						\boldsymbol\gamma_{\mathbf{-6k-1}} & \gamma_{-6k} & \boldsymbol\gamma_{\mathbf{-6k+1}} &\boldsymbol\gamma_{\mathbf{-6k+2}}&\gamma_{-6k+3}&\gamma_{-6k+4}&\boldsymbol\gamma_{\mathbf{-6k+5}}& \cdots&\cdots&\gamma_{k+1} &\gamma_{k+2}\\
						\gamma_{-6k} & \boldsymbol\gamma_{\mathbf{-6k+1}} &\boldsymbol\gamma_{\mathbf{-6k+2}} &\gamma_{-6k+3}&\gamma_{-6k+4} &\boldsymbol\gamma_{\mathbf{-6k+5}}& \iddots&&&\gamma_{k+2}&\vdots\\
						\boldsymbol\gamma_{\mathbf{-6k+1}} & \boldsymbol\gamma_{\mathbf{-6k+2}}&\gamma_{-6k+3}&\gamma_{-6k+4}&	\boldsymbol\gamma_{\mathbf{-6k+5}}&\iddots&&&&\vdots&\vdots\\
						\boldsymbol\gamma_{\mathbf{-6k+2}} &\gamma_{-6k+3}&\gamma_{-6k+4}&\boldsymbol\gamma_{\mathbf{-6k+5}}&\iddots& &	 & & &\vdots &\vdots\\
						\vdots &\gamma_{-6k+4}&\boldsymbol\gamma_{\mathbf{-6k+5}}&\iddots&&& && &\vdots& \vdots\\
						\vdots &\iddots&\iddots&&&& && &\vdots& \vdots\\
						\vdots &\iddots&&&&& && &\vdots& \gamma_{2k}\\
						\vdots &&&&&& && &\gamma_{2k}
& \boldsymbol\gamma_{\mathbf{2k+1}}\\
\hline
\gamma_{k+1}& \cdots & \cdots &\cdots & \cdots & \cdots & \cdots & \gamma_{2k-1}& \gamma_{2k} & \boldsymbol\gamma_{\mathbf{2k+1}} & \boldsymbol\gamma_{\mathbf{2k+2}}
						\end{array}\right)
	\end{equation*}	
\end{footnotesize}
	In contrast to the situation $yx^2=1$ from Example 	\ref{141122-1921}, the structure of the missing entries here is too complicated for the analytic approach and we believe the feasibility question can only be answered numerically 
	using linear matrix inequality solvers.
\end{example}

\begin{remark}
	It would be interesting to know, to what extent does the result \cite[Corollary 7.6]{RS18} (see Remark \ref{191122-1055}.\eqref{191122-1055-pt2}) extend to even degree sequences
	on all plane curves. As explained in Remark \ref{191122-1055}.\eqref{191122-1056}, one needs one more atom in the upper bound for curves of the form $y=q(x)$ with $\deg q=2$
	and the same is true if $\deg q\leq 1$ by Remark \ref{191122-1055}.\eqref{191122-1056-pt4}. On the other hand the results of the present paper suggest that for curves $\cZ(p)$, $\deg p\geq 3$, 	
	the upper bound could be $k\deg p$. Also from the concrete solution to the TMP on the curve $\cZ(y^2-x^3)$ \cite[Corollary 4.3]{Zal21} it follows that the same bound works. However,
	the forthcoming result of Bhardwaj \cite{Bha+} shows that also for degree 3 curves the upper bound has to be loosened, by constructing a truncated moment sequence of degree $2k=6$ on a curve 
	$\cZ(p)$, where $p(x,y)=y^2-x^3+ax-1$, $a=\frac{524287}{262144}$, with a minimal measure consisting of 10 atoms, which is $k\deg p+1$.
\end{remark}


\section*{Acknowledgements} 
I would like to thank the anonymous referees for very useful comments and suggestions for improving the manuscript.
Numerical examples in this paper were obtained using the software tool \textit{Mathematica}.

\section*{Funding} Supported by the Slovenian Research Agency grants J1-2453, J1-3004, P1-0288.

\end{document}